\newcommand{\R}{\mathbb{R}} 
\newcommand{\E}{\mathbb{E}}
\newcommand{\calN}{\mathcal{N}}
\newcommand{\calM}{{\cal M}}
\newcommand{\calL}{{\cal L}}
\newtheorem{theorem}{Theorem}[section]
\newtheorem{proposition}[theorem]{Proposition}
\newtheorem{assumption}{Assumption}
\newtheorem{problem}{Problem}
\newtheorem{corollary}[theorem]{Corollary}
\newtheorem{lemma}[theorem]{Lemma}
\newtheorem{definition}[theorem]{Definition}
\theoremstyle{remark}
\newtheorem{remark}{Remark}
\newtheorem{conjecture*}{Conjecture}
\theoremstyle{plain}
\title{Bi-stochastically normalized graph Laplacian:
convergence to manifold Laplacian and robustness to outlier noise}
\author{
Xiuyuan Cheng
\thanks{Department of Mathematics, Duke University, Durham, NC.
Email: xiuyuan.cheng@duke.edu. 
}
~~~~~~~~~~~~~~~~
Boris Landa
\thanks{Program in Applied Mathematics, Yale University, New Haven, CT.
Email: boris.landa@yale.edu }
}
\date{\vspace{-20pt}}
\begin{document}

\maketitle

\begin{abstract}
\noindent
Bi-stochastic normalization provides an alternative normalization of graph Laplacians in graph-based data analysis and can be computed efficiently by Sinkhorn-Knopp (SK) iterations. This paper proves the convergence of bi-stochastically normalized graph Laplacian to manifold (weighted-)Laplacian with rates, when $n$ data points are i.i.d. sampled from a general $d$-dimensional manifold embedded in a possibly high-dimensional space. Under certain joint limit of $n \to \infty$ and kernel bandwidth $\epsilon \to 0$, the point-wise convergence rate of the graph Laplacian operator (under 2-norm) is proved to be $  O( n^{-1/(d/2+3)})$ at finite large $n$ up to log factors, achieved at the scaling of $\epsilon \sim n^{-1/(d/2+3)} $. When the manifold data are corrupted by outlier noise, we theoretically prove the graph Laplacian point-wise consistency which matches the rate for clean manifold data plus an additional term proportional to the boundedness of the inner-products of the noise vectors among themselves and with data vectors. Motivated by our analysis, which suggests that not exact bi-stochastic normalization but an approximate one will achieve the same consistency rate, we propose an approximate and constrained matrix scaling problem that can be solved by SK iterations with early termination. Numerical experiments support our theoretical results and show the robustness of bi-stochastically normalized graph Laplacian to high-dimensional outlier noise.

\end{abstract}



\section{Introduction}

Many graph-based data analysis methods start by constructing a graph adjacency or affinity matrix from data, where each node in the graph represents a data sample. The affinity matrix, or the associated graph Laplacian matrix, can be used for various downstream tasks of unsupervised learning, including dimension reduction by spectral coordinates (using the eigenvectors of the affinity or graph Laplacian matrix), metric learning and visualization by  low-dimensional embedding, modeling and computing the graph diffusion processes, and so on. The classical methods include ISOMAP \cite{balasubramanian2002isomap}, Laplacian Eigenmap \cite{belkin2003laplacian}, Diffusion Map \cite{coifman2006diffusion,talmon2013diffusion}, among others and recent follow-up works. Apart from being a pivotal method for unsupervised learning \cite{van2009dimensionality, talmon2013diffusion}, graph Laplacian methods are also readily incorporated into semi-supervised learning \cite{nadler2009semi,slepcev2019analysis,flores2022analysis} and supervised tasks as a model regularization approach \cite{ando2006learning,shuman2013emerging},
and have wide applications in data analysis and machine learning.

When using graph Laplacian methods, one can normalize the graph Laplacian in a certain manner before computing the eigen-decomposition or the diffusion process on the graph. Traditional normalization approaches include dividing the affinity matrix by the row-sum to construct a row-stochastic matrix, which corresponds to the Markov transition matrix of a graph random walk. Another normalization is by dividing the affinity matrix symmetrically using a certain power of the row-sum (called `$\alpha$-normalization') before the row-stochastic normalization, which was first introduced in \cite{coifman2006diffusion} to correct the influence of non-uniform data density. 
Weighted graph Laplacians, which result in a range of divergence form elliptic operators,
have been studied in \cite{hoffmann2022spectral} for data clustering.
To introduce the specifics of kernelized graph affinity and graph Laplacians, we provide the required preliminaries and set-ups in Section \ref{sec:prelim}.

On the theoretical side, a fundamental question is the convergence of the graph Laplacian matrix to some limiting operator as the number of data samples (the graph size) increases. 
A primary case considered in the field of study is the {\it manifold data} setting, namely when observed data samples lie on a {general} unknown low-dimensional manifold embedded in the ambient space. A simple and widely used statistical model is by assuming that data samples are drawn i.i.d. from some distribution on the manifold. 
Under the manifold data setting, the convergence of graph Laplacian to limiting manifold operator has been intensively studied in the literature. 
The operator {\it point-wise convergence}  (`point-wise' is with respect to the Laplacian operator, meaning the operator applied to some test function) has been established in \cite{hein2005graphs,hein2006uniform,belkin2007convergence,coifman2006diffusion,singer2006graph}, and extended to more general class of kernels, especially variable bandwidth kernels \cite{ting2011analysis,berry2016variable,cheng2021convergence}.
The {\it eigen-convergence}, namely the convergence of empirical eigenvalues and eigenvectors to the spectrum of manifold Laplacian, has been studied in \cite{belkin2007convergence,von2008consistency,burago2015graph,wang2015spectral,singer2016spectral,eldridge2018unperturbed}, among others, and recently in \cite{trillos2020error,calder2019improved,dunson2021spectral,cheng2021eigen,peoples2021spectral}.
These results are for traditionally normalized graph Laplacians.

An alternative approach for graph Laplacian normalization is the bi-stochastic normalization, which for a symmetric affinity matrix searches for symmetric scaling factors multiplied from left and right to the matrix to make it row- and column-stochastic simultaneously. 
The problem is known as the matrix scaling problem and can be solved by the classical Sinkhorn-Knopp algorithm \cite{sinkhorn1964relationship,sinkhorn1967concerning} or its symmetrized version \cite{zass2005unifying,knight2014symmetry}.
For graph-based analysis, the bi-stochastic normalization has been studied in several works \cite{coifman2013bi,marshall2019manifold,wormell2021spectral} where the settings differ from i.i.d. samples on a general manifold. 
When finite data are sampled on a hyper-torus, \cite{wormell2021spectral} proved the eigen-convergence rates. 
However, the point-wise rate of graph Laplacian
that can be derived using the analysis therein was worse than the best known rates (for traditionally normalized graph Laplacian),
which we will match to in this paper.
We review related works in more detail in Section \ref{subsec:literature}.

Note that the two types of convergence, point-wise convergence and eigen-convergence, differ in nature and also in practical implications: 
the eigen-convergence can provide the consistency of spectral embedding, whereas the point-wise convergence can be used to show the consistency of manifold de-noising and regression. 
In addition, while operator point-wise convergence can be used to obtain eigen-convergence in certain settings, see for example in \cite{calder2019improved,cheng2021eigen}, the latter does not necessarily imply the former, especially when the eigen-convergence rate bound 
only controls finitely many eigen-modes. 
We thus view operator point-wise convergence as a more fundamental object and focus on the point-wise rate of the bi-stochastically normalized graph Laplacian in this work.
We discuss the extension to eigen-convergence in Section \ref{subsec:discuss-eigenvector}.

A notable potential advantage of bi-stochastic normalization is its robustness to noise in the data.
It was analyzed and proved in \cite{el2016graph} that the Gaussian kernelized graph Laplacian can be made robust to noise with a modification of the diagonal entries. 
More recently, \cite{landa2021doubly} showed that bi-stochastic normalization can make the Gaussian kernelized graph affinity matrix robust to heteroskedastic noise. The noise robustness properties of bi-stochastic scaling were further analyzed in \cite{landa2023robust}. The experiments in this work echo the observation that bi-stochastic normalization rather than traditional normalization is crucial for robustness against certain types of high dimensional noise. These emerging results call for a more complete understanding of the convergence of the bi-stochastically normalized graph Laplacian to a manifold operator at finite sample size, under a general manifold data setting of i.i.d. samples and possibly with noise.

In this work, we aim to prove the convergence of the bi-stochastically normalized graph Laplacian to the manifold operator under a general setting of manifold data, either clean or with outlier noise.
Motivated by our analysis later in the paper, we introduce a modified problem of approximate bi-stochastic scaling with the constraint that the scaling factor is entry-wise uniformly lower-bounded by some $O(1)$ constant. 
This boundedness constraint incurs a mild modification of the Sinkhorn algorithm for theoretical purposes, 
which is conjectured to be omittable in practice. 
In all experiments conducted in this paper, this modification did not affect computational efficiency - on the contrary, since only approximate scaling is needed in the modified problem, one can terminate the Sinkhorn iterations early once a certain tolerance is achieved. 
The constrained scaling problem facilitates the analysis of the computed scaling factor. Specifically, we first show that the approximate matrix scaling problem has a non-empty solution set by theoretically constructing a scaling factor based on the manifold coordinates and density functions, which we call the `population scaling factor'. We then prove a 2-norm consistency of any computed (approximate) scaling factor to the population one, see Lemma \ref{lemma:eta1-eta2} and Lemma \ref{lemma:hatetac-baretac} for clean and noisy data respectively.

Based on the analysis of the approximate scaling factors, we prove the point-wise convergence of the (approximately) bi-stochastically normalized graph Laplacian to the weighted manifold Laplacian ($\Delta_p$ to be defined in \eqref{eq:def-lapp}), under 2-norm and with rates, when applied to a regular test function. For clean manifold data, the proved convergence rate matches that when the population scaling factor is used, and achieves the same point-wise convergence rate of traditionally normalized graph Laplacians, see Theorem \ref{thm:hatLn-2norm}. In addition, as a practical benefit of bi-stochastic normalization lies in its robustness to noise, we provide a theoretical justification of this by showing the convergence to a manifold operator when the influence of outlier noise is sufficiently small, see Theorem \ref{thm:hatL-convergence-noise}.  The recovery of the manifold operator using clean or noisy manifold data and the robustness to outlier noise are supported by numerical experiments. 

In summary, the contributions of the work include:

\begin{itemize}
    \item[-]
    For i.i.d. data sampled  on a general manifold embedded in an ambient space, we prove the point-wise convergence of (approximately) bi-stochastically normalized graph Laplacian to a manifold Laplacian operator with rates that match the proved rates of traditionally normalized graph Laplacians.
    
    \item[-] When the manifold data are corrupted by outlier noise, we prove the approximation to the manifold operator under certain boundedness conditions on the noise vectors. The conditions can be satisfied by high dimensional noise vectors that are possibly heteroskedastic.
    
    \item[-] We propose an approximate and constrained matrix scaling problem that can be solved by Sinkhorn-Knopp iterations with early termination in practice. We apply the algorithm to simulated datasets, and the numerical experiments support our theory.
\end{itemize}

In the rest of the section, we provide an overview of our main theoretical results, a clarification of notations, and a further review of the relevant literature. In Section \ref{sec:prelim}, we introduce the set-ups of manifold data and the kernelized graph Laplacian. The modified matrix scaling problem is introduced and analyzed in Section \ref{sec:approx-matrix-scaling-problem}. Section \ref{sec:theory-clean-data} proves the convergence to a weighted manifold Laplacian for clean manifold data, and the theory is extended to data with outlier noise in Section \ref{sec:theory-noise-outlier}. Numerical results are presented in Section \ref{sec:exp}, and in Section \ref{sec:discuss} we discuss future directions. Unless stated otherwise, all proofs are given in Section \ref{sec:proofs}.

 \begin{table}[t]
 \small
   \caption{
\label{tab:notations}
List of default notations, see more in Section \ref{subsec:notations}
} \vspace{-4pt}
 \begin{minipage}{0.45\linewidth}
 \begin{tabular}{  p{0.5cm}  p{5.75cm}   }
 \hline
  ${\calM}$ 	& $d$-dimensional manifold in $\R^m$  	  \\ 
  $d$			& intrinsic dimensionality of manifold data \\
  $m$		& dimensionality of the ambient space \\
 $p$			& data sampling density on ${\calM}$  \\
 $\Delta_p$       &  weighted manifold Laplacian, see \eqref{eq:def-lapp} \\	
 $x_i$      				& data samples		\\
$n$ 			& number of  data samples 	\\
 $\epsilon$ 	&  kernel bandwidth parameter 		 \\ 
$\varepsilon_{\rm SK}$	&  {$\| \cdot \|_\infty$-discrepancy  in the bi-stochastic normalization, see Definition \ref{def:eps-approx-scaling} }\\
$\varepsilon_z$			 &  {uniform bound on mutual inner-products of outlier noise w.h.p., see Assumption \ref{assump:A3}} \\
  \hline
\end{tabular}
\end{minipage}
 \begin{minipage}{0.5\linewidth}
 \begin{tabular}{  p{0.75cm}  p{6.25cm}   }
 \hline
 ${\bf 1}$ 				& all-ones vector \\
 $ \rho_X$				& function evaluation operator on data, see \eqref{eq:def-rho-X} \\
 $\bar{\eta}$ 			& population matrix scaling factor \\
  $\hat{\eta}$ 			& empirical matrix scaling factor \\
 $W$ 		 		& kernelized graph affinity matrix 			\\ 
 $D(W)$				& degree matrix of $W$, see \eqref{eq:def-DW} \\
 $D_a$				& diagonal matrix made from vector $a$ \\
 $L_{\rm un}$ 				& un-normalized graph Laplacian \\
 $L_{\rm rw}$				& random-walk graph Laplacian \\
 $\xi_i$ 					&  outlier noise vectors \\
 $ \odot $ 				& entry-wise multiplication \\
  $ \oslash $ 			& entry-wise division \\
  \hline
\end{tabular}
\end{minipage}
\end{table}

\subsection{Main results}\label{subsec:main-results}

The main theoretical results of the paper are summarized as follows, where $n$ is the number of i.i.d. data samples and $\epsilon > 0$ is the kernel bandwidth parameter, see \eqref{eq:def-W}:

\begin{itemize}
    \item 
    For clean manifold data, Theorem \ref{thm:hatLn-2norm} proves the 2-norm consistency of the point-wise convergence of the bi-stochastically normalized graph Laplacian to the manifold weighted Laplacian operator $\Delta_p$ defined in \eqref{eq:def-lapp}.
    The error is shown to be (up to log factors)
   \[
    O \left(\epsilon, n^{-1/2}  \epsilon^{- d/4 - 1/2} \right),
    \]
    which, at the optimal scaling of $\epsilon \sim n^{-1/(d/2+3)} $, gives an overall error of (up to log factors)
    \[
    O( n^{-1/(d/2+3)}).
    \]
    This  achieves the same rate of point-wise convergence of traditionally normalized graph Laplacians (where the affinity matrix is built with a smooth kernel like Gaussian) in previous works.
    
    \item
    When data have additive outlier noise, where the probability of being corrupted by noise is uniformly bounded away from one and the noise vectors satisfy certain boundedness conditions (the specifics are given in Assumption \ref{assump:A3}), Theorem \ref{thm:hatL-convergence-noise} proves the noise robustness of the bi-stochastically normalized graph Laplacian by showing a 2-norm consistency bound of (up to log factors)
    \[
    O \left(\epsilon, 
    n^{-1/2}  \epsilon^{- d/4 -1/2}, 
    \varepsilon_z \epsilon^{-3/2}
    \right).
    \]
    Compared to Theorem \ref{thm:hatLn-2norm}, the error has an additional term of $O(\varepsilon_z \epsilon^{-3/2})$, where $\varepsilon_z$ is the uniform bound, possibly with high probability, of the noise vectors' inner-products mutually among themselves and with the clean manifold data vectors. This shows that the convergence rate obtained under the setup of clean manifold data can be restored when the influence of outlier noise is small enough. 
    A prototypical noise model that satisfies our assumptions consists of high-dimensional random vectors that are conditionally independent given the clean data and can be heteroskedastic, see Remark \ref{rk:varepsilon-z}. 
    
\end{itemize}

\subsection{Notations}\label{subsec:notations}

We use $\R_+ = (0,\infty)$ to denote strictly positive numbers. 
$D_a = \text{diag}\{ a_1, \cdots, a_n\}$ stands for the diagonal matrix made from vector $a \in \R^n$. 
$D(W)$ denotes the degree matrix of a symmetric non-negative matrix $W$ as defined in \eqref{eq:def-DW}.
We use $\| v \|_p$ to denote for the $p$-norm of vector $v$, $ 1 \le p \le \infty $,
and $\| A \|_2$ for the matrix operator norm (induced by  vector 2-norm).
The symbol $ \odot $ stands for entry-wise multiplication, that is, for $u, v \in \R^n$, $u \odot v \in \R^n$ and $(u \odot v)_i =u_i v_i$.
The entry-wise multiplication is also defined for matrices, denoted as $A \odot B$, which is also known as Hadamard product. 
The symbol $ \oslash $ stands for entry-wise division, and $(u \oslash v)$ is defined when $v_i$ are all non-zero.
The default notations are listed in Table \ref{tab:notations}. 

The asymptotic notations are as follows: 
$f = O(g)$ means  $|f| \le C |g|$ in the limit for some $C> 0$, 
and $O_a(\cdot)$ declares the constant dependence on $a$,
 that is,  $|f| \le C_a |g|$ in the limit for some constant $C_a $ that depends on $a$,
and $a$ can be a quantity or a function to be specified in the context.
We may also omit the subscript $a$ and explain the constant dependence in the text.
$f = \Omega(g)$ means  for $f, g > 0$, $f/g \to \infty$ in the limit.
$f = o(g)$ means for $g > 0$, $|f|/g \to 0$ in the limit.
$f = \Theta(g)$, also denoted as $f \sim g$, means for $f$, $g \ge 0$, $C_1 g \le f \le C_2 g$ in the limit for some $C_1, C_2 >0$.
For compact presentation, 
$ O( g_1, g_2) $ means $O(|g_1|  +  |g_2| )$,
and similarly for $\Theta( g_1, g_2)$.
We also use  $ O( g_1, \cdots, g_k) $  to denote $O(|g_1| + \cdots +  |g_k| )$.

\subsection{Related works}\label{subsec:literature}

\paragraph{Convergence of graph Laplacian.}
For traditionally normalized graph Laplacians, the convergence of the graph Laplacian to a manifold operator has been studied in several places. Under the manifold data setting, the convergence of the integral operator of the normalized kernel function (rather than matrix) was shown in the original Diffusion Map paper  \cite{coifman2006diffusion}. It was also shown that with different $\alpha$ in the $\alpha$-normalization the integral operator recovers different limiting manifold operators.
The point-wise convergence of graph Laplacian matrix for i.i.d. sampled data drawn from a uniform density on manifold was shown in \cite{singer2006graph} with rates: Using a Gaussian kernelized graph affinity with kernel bandwidth $\epsilon$, the point-wise convergence of the graph Laplacian to the Laplace-Beltrami operator was proved to have an error of $O(\epsilon, n^{-1/2} \epsilon^{-d/4-1/2})$ for large finite $n$, with high probability and up to a log factor. This gives an overall rate of of $O(n^{-1/(d/2+3)})$ at the optimal choice of $\epsilon \sim n^{-1/(d/2+3)}$.
The point-wise convergence of the graph Laplacian using variable bandwidth kernels was proved in \cite{berry2016variable} with rates.
The same point-wise convergence rate of $O(n^{-1/(d/2+3)})$ as in \cite{singer2006graph} was proved in \cite{cheng2021convergence} for the graph Laplacian built with $k$-nearest-neighbor estimated bandwidth (known as the `self-tuned' graph Laplacian \cite{zelnik2005self}), where the kernel matrix is also normalized by the empirically estimated bandwidth values.
For the $\alpha=1$-normalized graph Laplacian, \cite{cheng2021eigen} proved the point-wise convergence rate of $O(n^{-1/(d/2+3)})$, see Theorem 6.2 in \cite{cheng2021eigen}. 
\cite{cheng2021eigen} also proved the eigen-convergence of the graph Laplacian built using the Gaussian kernel with rates,
which are $O(n^{-1/(d/2+2)})$ for eigenvalues and $O(n^{-1/(d/2+3)})$ for eigenvectors in 2-norm, both up to a log factor.
When the graph Laplacian was built using a compactly supported kernel function, \cite{calder2019improved} proved a point-wise convergence rate of $O(n^{-1/(d+4)})$, see Theorem 3.3 in \cite{calder2019improved}, and also extended the result to $k$NN graphs. \cite{calder2019improved} also proved an eigen-convergence rate of  ${O}(n^{- 1/(d + 4)})$ up to a log factor for such graphs when $d \ge 2$, which holds for both eigenvalue convergence and eigenvector convergence in 2-norm; The same rate of eigenvector convergence in $\infty$-norm was proved in \cite{calder2020lipschitz}.
While the current paper borrows certain techniques from these previous works, the results therein do not cover the bi-stochastically normalized graph Laplacian.

Many more variants of the kernelized graph Laplacian have been developed, including the use of anisotropic kernels \cite{singer2009detecting, talmon2013empirical,berry2016local}, landmark sets  \cite{bermanis2016measure,long2017landmark, shen2022scalability}, and so on. 
These methods improve the statistical and computational efficiency of graph Laplacian methods in various scenarios and may be used concurrently with bi-stochastic normalization. The theoretical understanding of the latter, which is the focus of the current paper, can potentially be extended to these modifications as well.

\paragraph{Bi-stochastic graph Laplacian.}
The bi-stochastic kernel was proposed in \cite{coifman2013bi} for graph-based data analysis without solving for matrix scaling. 
From the perspective of an integral kernel, \cite{marshall2019manifold} proved the point-wise convergence of the bi-stochastically normalized kernel operator to the manifold operator.

The bi-stochastic graph Laplacian was previously studied in \cite{wormell2021spectral}, which we comment in detail here.
Considering the setting of i.i.d. data lying on a hyper-torus, \cite{wormell2021spectral} proved that bi-stochastically normalized graph Laplacian achieves an eigenvalue convergence rate of $O( n^{-2/(d+8) +o(1)})$ which also holds with the standard (traditional) normalization, 
and eigenspace convergence using bi-stochastic normalization enjoys an improved rate of $O( n^{-4/(d+12) +o(1)})$. 
The paper also derived point-wise rate of the bi-stochastically normalized semi-group operator $P^n_\epsilon = I + \epsilon L^n_\epsilon$, where $L^n_\epsilon$ is the finite-sample graph Laplacian. The point-wise rate of $P^n_\epsilon$ to approximate $e^{\epsilon \calL}$ was shown to be $O(\epsilon^3, n^{-1/2}\epsilon^{-d/4})$ where $\calL$ is the weighted manifold Laplacian, see Proposition 3.6 and Theorem 3.7 in \cite{wormell2021spectral}. However, since $\frac{1}{\epsilon}(e^{\epsilon \calL} -I)$ only approximates $\calL$ up to $O(\epsilon)$, the induced point-wise rate of $L^n_\epsilon$ to approximate $\calL$ is $O(\epsilon, n^{-1/2}\epsilon^{-d/4-1})$, which at optimal $\epsilon$ leads to the eigenvalue rate therein as $O(n^{-2/(d+8)})$  
and is slower than the point-wise rate $O(n^{-2/(d+6)})$
 for traditionally normalized graph Laplacians \cite{singer2006graph}. In addition, the techniques in \cite{wormell2021spectral} cannot directly extend when the manifold is not a torus.
In the current work, we use a different technique to prove the point-wise convergence of bi-stochastic  graph Laplacian,
which applies to general manifold data (not necessarily on a hyper-torus).
Our result also matches the traditional point-wise rate of $O(n^{-2/(d+6)})$.
In addition, we theoretically analyze the robustness to data noise.

\paragraph{Noise-robustness of graph Laplacian.}
The noise robustness of graph Laplacian has been studied in \cite{el2016graph}, suggesting the special property of Gaussian kernels. More recently, \cite{ding2022impact} analyzed the influence of high dimensional noisy data on the spectrum of graph Laplacian. The noise robustness of  bi-stochastically normalized Gaussian kernelized affinity matrix was studied in \cite{landa2021doubly}, which proved a  decay of the influence of high dimensional noise at rate $O(m^{-1/2})$ as the ambient dimensionality $m$ increases. \cite{landa2023robust} analyzed and showed the benefits of bi-stochastic scaling in robust inference of manifold data. The current paper considers outlier noise in data and shows the robustness of  bi-stochastically normalized graph Laplacian, both theoretically (via the point-wise convergence to the limiting operator with rates) and in experiments.

\paragraph{Sinkhorn-Knopp (SK) iteration.}
To solve the bi-stochastic normalization which can be casted as a matrix scaling problem,
the current paper adopts the  accelerated  symmetric Sinkhorn iteration algorithm developed in \cite{wormell2021spectral},
which also provided  a convergence analysis.
Compared with the original SK algorithm \cite{sinkhorn1964relationship,sinkhorn1967concerning},
the symmetric SK iteration utilizes the symmetry of the matrix to be normalized as well as the resulting symmetry of the scaling factors,
which achieves an acceleration in convergence.
A similar heuristic was suggested earlier in \cite{marshall2019manifold}, and also studied for symmetry-preserving matrix scaling \cite{knight2014symmetry}. 
In this paper, we propose an approximate and constrained matrix scaling problem that can be solved by SK iterations with early termination,
see Algorithm \ref{algo:SK-early}.
In particular, the parameter $\varepsilon_{\rm SK}$, which sets the tolerance of the approximate bi-stochastic normalization,
is motivated by the statistical error analysis.
Our analysis justifies the usefulness of approximate bi-stochastic normalization and the $\varepsilon_{\rm SK}$ parameter allows a trade-off between accuracy and computational cost in practice.

The Sinkhorn algorithm has become a popular tool in computational optimal transport (OT)  to solve for entropy-regularized OT on a discrete system \cite{cuturi2013sinkhorn,peyre2019computational}. Recent developments include stochastic optimization \cite{genevay2016stochastic}, coordinate descent techniques \cite{altschuler2017near}, among others. New techniques of solving matrix scaling problems can potentially be applied to solve the (approximate) matrix scaling problem proposed in this paper.

\section{Preliminaries }\label{sec:prelim}

\subsection{Manifold data and manifold operator}

We need the following assumptions on the data manifold $\calM$ and density $p$ of clean manifold data.
The assumption of data with outlier noise will be introduced in Section \ref{sec:theory-noise-outlier}.

\begin{assumption}[A1]
\label{assump:A1}
${\calM}$ is a $d$-dimensional compact connected  
 $C^{\infty}$ manifold (without boundary) 
isometrically embedded in $\R^{m}$.
\end{assumption}

We consider the measure space $(\calM, p dV)$, where $dV$ is the local volume form of the manifold $\calM$, and $p$ is the density of the data distribution on $\calM$. In below, we denote by $p$ both the distribution and the density function.
In this work, we consider $n$ i.i.d samples $\{ x_i \}_{i=1}^n$ drawn from $p$ on $\calM$.

\begin{assumption}[A2]
\label{assump:A2}
The data density $p\in C^{6}(\calM)$ and for positive constants $p_{\rm min}, \, p_{\rm max}$, 
\[ 0< p_{\rm min}  \le p(x) \le p_{\rm max} < \infty, \quad\forall x\in{\calM}.
\]
\end{assumption}

Due to the compactness of $\calM$, the manifold has finite reach and bounded curvature. 
As long as $p$ is continuous on $\calM$ and never vanishing the uniform lower and upper boundedness follows.
The $C^6$ regularity of $p$ is technically required for the analysis, see Lemma \ref{lemma:construct-bar-eta}, and may be relaxed (e.g., to be in  H\"older class) following standard techniques. 
The assumption on $\calM$ may also be extended to account for non-compactness or boundary. 
We postpone these extensions for simplicity, as the current work focuses on the bi-stochastic normalization of graph Laplacians.

Let $\Delta = \Delta_\calM$ denote the Laplace-Beltrami operator on $\calM$, and $\nabla = \nabla_\calM$ the manifold gradient. 
The weighed manifold Laplacian operator is defined as
\begin{equation}\label{eq:def-lapp}
\Delta_p := \Delta +   \frac{\nabla p}{p } \cdot \nabla,
\end{equation}
and when $p$ is uniform, $\Delta_p$ is reduced to $\Delta$.
The weighted manifold Laplacian $\Delta_p$ is the Fokker-Planck operator of the diffusion process on the manifold towards the equilibrium distribution $p$. We will show that $\Delta_p$ is the limiting operator of the bi-stochastically normalized graph Laplacian. 

\subsection{Kernelized graph affinity matrix}

Given data samples $\{ x_i \}_{i=1}^n$, 
the graph affinity matrix $W$ can be computed from a kernel function $k (x,x')$
by letting $W_{ij}$, the affinity between node $i$ and $j$, equal $k(x_i, x_j)$.
When $k$ is symmetric and non-negative, the kernelized graph affinity matrix $W$ is (real) symmetric and has non-negative entries. 
A widely used choice of $k$ is Gaussian RBF kernel. The Gaussian kernelized graph affinity matrix $W$ is defined as
\begin{equation}\label{eq:def-W}
W_{ij } = \frac{1}{n}
\epsilon^{-d/2} g \left(  \frac{ \| x_i - x_j\|^2}{\epsilon} \right),
\end{equation}
where  $\epsilon > 0$ is the kernel bandwidth parameter, and 
\begin{equation}\label{eq:def-g-gaussian}
g( \xi) = \frac{1}{( 4 \pi )^{d/2}} e^{- \xi/ 4}, \quad \xi \in [0, \infty).
\end{equation}

In the literature on graph Laplacians and Diffusion Maps, there are two conventions for the usage of ``$\epsilon$''.
One group of works use $\epsilon$ to stand for the scale of distance.
E.g. in an ``$\epsilon$-graph'' the affinity matrix is $h( {\|x_i - x_j\|}/{\epsilon})$ where $h(\xi ) = {\bf 1}_{[0,1]}$,
that is, two nodes $i$ and $j$ are connected if $\| x_i - x_j \| \le \epsilon$. 
The other group of works uses $\epsilon$ to stand for diffusion time, e.g., the Gaussian kernel affinity $\exp\{- \| x_i - x_j\|^2/\epsilon\}$, 
and this $\epsilon$ is on the scale of the squared distance. 
In this work, we follow the second convention as has been used in \cite{coifman2006diffusion,singer2006graph,berry2016variable,cheng2021convergence}, among others.

The definition \eqref{eq:def-W}\eqref{eq:def-g-gaussian}
leads to the kernel function in $\R^{m}$ to be Gaussian.
The constant factors $( 4 \pi )^{-d/2}$  and $\epsilon^{-d/2}$ in the definition of $W $ are for theoretical convenience, and are not needed  in practice (especially knowledge of the intrinsic dimensionality $d$): 
for the random-walk graph Laplacian and the bi-stochastic normalized graph Laplacian considered in this paper, these constant factors are cancelled out in computation. 

The {\it degree matrix} $D$ of $W $, denoted as $D(W)$ in this paper, is defined as 
\begin{equation}\label{eq:def-DW}
D_{ii} = \sum_{j = 1}^N W_{ij}, \quad i=1, \cdots, n,
\quad  \text{and } D_{ij} = 0, \quad \forall i \neq j.
\end{equation}
Typically in the analysis of kernelized graph Laplacian one considers the joint limit when $n \to \infty$ and the kernel bandwidth $\epsilon $ approaches zero.
In this paper, we consider the joint limit (with more specific asymptotic conditions below) and our theory presents non-asymptotic result which holds for finite $n$.

\begin{remark}[connectivity regime]
\label{rk:connectivity-regime}
Since the diagonal entries of $W$ equal $(4\pi\epsilon)^{-d/2}/n$ and is positive, the degree matrix $D(W)$ has strictly positive entries on the diagonal. 
If the diagonal entries of $W$ are set to be zero (denoted as $W^0$ below), since the data samples are i.i.d. drawn from $p$ on $\calM$, a standard concentration argument gives  that $D(W^0)_{ii}$ for all $i$, with high probability, are bounded from below by some $O(1)$ positive constant under the regime that $\epsilon^{d/2} =\Omega( {\log n}/{n}) $, see  Lemma \ref{lemma:degree-conc-W}.
The condition $\epsilon^{d/2} =\Omega( {\log n}/{n}) $ is known as the `connectivity regime', and in this paper our theoretical assumption on the largeness of $\epsilon$ as $n$ increases is always under this regime. 
\end{remark}

\subsection{Traditionally normalized graph Laplacians}

Given a symmetric and non-negative-entry graph affinity matrix $W$ and its degree matrix $D(W)$, the {\it un-normalized} graph Laplacian is defined as 
\[
L_{\rm un} := D(W ) - W.
\]
Assuming that $D(W)$ has all strictly positive diagonal entries
(which holds for $W$ as in \eqref{eq:def-W} and also $W^0$ with zero diagonal entries under the connectivity regime, see Remark \ref{rk:connectivity-regime}),
the {\it normalized} graph Laplacian, also known as random-walk graph Laplacian, is defined as
\[
L_{\rm rw} := I - D(W )^{-1} W.
\]

To handle non-uniform data density on the manifold, an $\alpha$-normalized graph Laplacian was introduced in \cite{coifman2006diffusion}, for $\alpha  \in [0,1]$.
It is constructed by first using the $\alpha$-th power of the row sums of $W$ to normalize $W$ symmetrically,
which produces another affinity matrix $\tilde{W}^{(\alpha)}$, and then computing the random-walk graph Laplacian associated with  $\tilde{W}^{(\alpha)}$.
It was shown in  \cite{coifman2006diffusion} that the $\alpha$-normalized graph Laplacian recovers the weighted manifold Laplacian operator (the Fokker-Planck operator) $\Delta_p$ when  $\alpha = 1/2$, and the Laplace-Beltrami operator $\Delta$ when $\alpha = 1$.
As the bi-stochastically normalized graph Laplacian also converges to the operator $\Delta_p$ in the large sample limit, we compare it with the $\alpha = 1/2$-normalized graph Laplacian in experiments and show that the former has better robustness when data have outlier noise. 

\section{Approximate and constrained matrix scaling}\label{sec:approx-matrix-scaling-problem}

We will apply bi-stochastic normalization to the graph affinity matrix $W$ (possibly setting to be zero the diagonal entries, see more later).
Generally, we consider matrix $A$ which is real symmetric and has non-negative entries.
Instead of the standard matrix scaling of $A$, namely finding $\eta \in \R_+^n$ such that $D_\eta A D_\eta$ is exactly bi-stochastic, we consider approximate bi-stochasticity. 

\begin{definition}[$\varepsilon_{\rm SK}$-approximate scaling factor]
\label{def:eps-approx-scaling}
Given a real symmetric  $n$-by-$n$ matrix $A$ with non-negative entries, 
we say that $\eta \in \R_+^n$ is an $\varepsilon_{\rm SK}$-approximate bi-stochastic scaling factor of $A$ if 
    $D_{{\eta}} A D_{{\eta}} {\bf 1} = {\bf 1} + e$,
    and $ \| e \|_\infty \le \varepsilon_{\rm SK}$,
    where $\bf 1$ stands for vector of all ones in $\R^n$.
\end{definition}

We also introduce the constraint of lower-bounded scaling factors, and solve for the following problem.

\begin{problem}[$(C_{\rm SK},\varepsilon_{\rm SK})$-matrix scaling problem]\label{prob:C-eps-matrix-scaling}
Given positive constants $C_{\rm SK}$ and $\varepsilon_{\rm SK}$,
find an  $\eta \in \R_+^n$ which is an $\varepsilon_{\rm SK}$-approximate bi-stochastic scaling factor of $A$
and satisfies that 
$\min_{i} \eta_i \ge C_{\rm SK}$.
\end{problem}

Theoretically, we will show that, when $W$ is built from clean manifold data and $(C_{\rm SK}, \varepsilon_{\rm SK})$ are properly chosen, the solution set of Problem \ref{prob:C-eps-matrix-scaling} is not empty - by constructing a solution which we call the {\it population} scaling factor (Section \ref{subsec:population-scaling}). 
Furthermore, we will show that any solution $\eta$  to the $(C_{\rm SK},\varepsilon_{\rm SK})$-scaling problem approximates the population scaling factor, 
to be specified below (Section \ref{subsec:lemmas-scaling}). 
The lower bound  $C_{\rm SK}$ will be used in our analysis, in Section \ref{subsec:lemmas-scaling} and the subsequent proofs.
In algorithm, the entry-wise lower bound incurs a minor modification of the SK iterations, see more in Section \ref{subsec:modified-SK}.

\subsection{Population approximate scaling factor}\label{subsec:population-scaling}

In this subsection we show that for some $C_{\rm SK}$ depending on manifold $\calM$ and density $p$, there is at least one scaling factor $\bar{\eta}$ which solves Problem \ref{prob:C-eps-matrix-scaling} where 
\begin{equation}\label{eq:varepsilon-bar-eta}
\varepsilon_{\rm SK} = O \left( \epsilon^2, \sqrt{\frac{\log n }{n \epsilon^{d/2}}} \right).
\end{equation}
We construct $\bar{\eta}$ by evaluating a function $q_{\epsilon}$ on $\calM$ on the $n$ data points,
 that is,  $\bar{\eta} = \rho_X (q_{\epsilon})$, where the  function evaluation of $f$ on dataset $X$ is denoted by $\rho_X(f)$ defined as 
 \begin{equation}\label{eq:def-rho-X}
\rho_X (f) := (f(x_1), \cdots, f(x_n)) \in \R^n.
\end{equation} 
The following lemma provides a construction of $q_\epsilon$ that fulfills our purpose. 

\begin{lemma}[Existence of population scaling factor]
\label{lemma:construct-bar-eta}
Under Assumptions (A1)(A2), 
there exists a function $r \in C^4(\calM)$, 
$r$ is determined by $p$ and manifold extrinsic coordinates, 
such that 
\[
q_\epsilon: = p^{-1/2}(1 + \epsilon r)
\]
is $C^4$ on $\calM$ and  satisfies the following:

(i)
There are positive constants $q_{\rm min} $, $q_{\rm max} $, and $\epsilon_{0} $ which are determined by $r$ and $p$, 
s.t. for $\epsilon < \epsilon_{0}$,
 $0 < q_{\rm min} \le q_\epsilon (x) \le q_{\rm max}$ for all $x\in \calM$.

(ii) Suppose $x_i \sim p$ i.i.d. on $\calM$, $i=1,\cdots,n$,  and $ W$ is defined as in \eqref{eq:def-W}.
If as $n \to \infty$, $\epsilon \to 0+$ and $\epsilon^{d/2} = \Omega(\log n/n)$,
then when $n $ is sufficiently large,  w.p. $> 1-2  n^{-9}$, 
$\bar{\eta} := \rho_X (q_\epsilon)$ is an $\varepsilon_{\rm SK}$-approximate bi-stochastic scaling factor of $W$
where $\varepsilon_{\rm SK}$ satisfies \eqref{eq:varepsilon-bar-eta}.
\end{lemma}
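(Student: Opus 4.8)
The plan is to construct $r$ explicitly by performing a kernel-expansion analysis of the bi-stochasticity constraint, then verify the two claims in turn. First I would recall the standard variance-bias decomposition for kernelized sums on a manifold: for a smooth test function $f$ on $\calM$, one has (writing $G_\epsilon f(x) = \epsilon^{-d/2}\int_\calM g(\|x-y\|^2/\epsilon) f(y) p(y)\, dV(y)$)
\[
G_\epsilon f(x) = m_0 f(x) p(x) + \epsilon\, m_2 \big( \omega(x) f(x) p(x) + \tfrac12 \Delta(f p)(x) \big) + O(\epsilon^2),
\]
where $m_0, m_2$ are moments of $g$ and $\omega$ is a curvature term coming from the discrepancy between the ambient chordal distance $\|x-y\|$ and the geodesic distance (this is the place Assumption (A1), compactness, bounded reach and curvature, enters). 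The empirical degree $\frac1n\sum_j k(x_i,x_j)f(x_j)$ concentrates around $G_\epsilon f(x_i)$ with fluctuation $O(\sqrt{\log n/(n\epsilon^{d/2})})$ by a Bernstein/Hoeffding argument under the connectivity regime $\epsilon^{d/2} = \Omega(\log n/n)$; this is presumably the content of an auxiliary concentration lemma (cf.\ Lemma~\ref{lemma:degree-conc-W}), so I would cite it rather than reprove it.

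The core of the argument is then to choose $q_\epsilon = p^{-1/2}(1+\epsilon r)$ so that, at the population level, $q_\epsilon(x)\, G_\epsilon(q_\epsilon)(x) = 1 + O(\epsilon^2)$. Plugging $f = q_\epsilon$ into the expansion and using $q_\epsilon = p^{-1/2} + \epsilon\, p^{-1/2} r$, the leading term is $q_\epsilon \cdot m_0 p^{-1/2}(1+\epsilon r) p = m_0(1 + \epsilon r) \cdot p^{1/2} \cdot p^{-1/2}(1+\epsilon r)$; after normalizing so that $m_0 = 1$ (or absorbing the constant), the $O(1)$ term is $1$ and the $O(\epsilon)$ term is a linear expression in $r$ plus the curvature/Laplacian term $m_2\big( \omega p^{1/2}\cdot p^{-1/2} + \tfrac12 p^{-1/2}\Delta(p^{1/2}) \big)$ evaluated against $p^{-1/2}$. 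Setting the total $O(\epsilon)$ coefficient to zero yields an explicit algebraic formula for $r$ in terms of $p$, $\Delta p$, $\nabla p$, and the curvature term $\omega$ — no PDE to solve, just an identity. Since $p \in C^6$ and $\omega$ is smooth (determined by the embedding), this $r$ is $C^4$, which gives part (i): $q_\epsilon$ is $C^4$, and because $q_\epsilon \to p^{-1/2}$ uniformly as $\epsilon\to 0$ with $p$ bounded away from $0$ and $\infty$, there are $q_{\min}, q_{\max}, \epsilon_0$ with $0 < q_{\min} \le q_\epsilon \le q_{\max}$ for $\epsilon < \epsilon_0$.

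For part (ii), I would combine the two ingredients: the population-level identity $q_\epsilon(x) G_\epsilon(q_\epsilon)(x) = 1 + O(\epsilon^2)$ uniformly in $x$ (here the $O(\epsilon^2)$ needs the third-order Taylor remainder to be controlled, which is where $C^4$ regularity of $q_\epsilon$ — hence $C^6$ of $p$ — is used), and the uniform concentration $\max_i |\frac1n\sum_j W_{ij} q_\epsilon(x_j) - G_\epsilon(q_\epsilon)(x_i)| = O(\sqrt{\log n/(n\epsilon^{d/2})})$ with probability $> 1 - 2n^{-9}$ (a union bound over $n$ points, each with a Bernstein tail at level $\sim n^{-10}$, using that $\|q_\epsilon\|_\infty$ and the kernel are bounded). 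Multiplying the concentration bound by the uniformly bounded factor $q_\epsilon(x_i) \le q_{\max}$ and adding the two error terms gives $(D_{\bar\eta} W D_{\bar\eta}\mathbf 1)_i = q_\epsilon(x_i)(\tfrac1n\sum_j W_{ij} q_\epsilon(x_j)) = 1 + e_i$ with $\|e\|_\infty = O\big(\epsilon^2, \sqrt{\log n/(n\epsilon^{d/2})}\big)$, i.e.\ exactly \eqref{eq:varepsilon-bar-eta}. Also $\min_i \bar\eta_i = \min_i q_\epsilon(x_i) \ge q_{\min} > 0$, so $\bar\eta$ meets the lower-bound constraint with $C = q_{\min}$.

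The main obstacle, I expect, is getting the $O(\epsilon^2)$ rather than merely $O(\epsilon)$ in the population bias term: this requires that the $\epsilon$-order coefficient vanish \emph{identically} as a function on $\calM$, which forces the bias expansion to be carried out carefully including the curvature correction $\omega$ from the extrinsic embedding (geodesic-vs-chordal distance, normal-coordinate Jacobian) and the interaction of $\Delta$ with the $p^{-1/2}$ factor, and then verifying that the resulting $r$ is well-defined and $C^4$. A secondary technical point is ensuring the third-order remainder in the Taylor expansion of $q_\epsilon$ around each point is uniformly $O(\epsilon^{3/2})$ inside the integral (so that after the $\epsilon^{-d/2}$ normalization and Gaussian tails it contributes $O(\epsilon^2)$ to $q_\epsilon G_\epsilon q_\epsilon$) — standard but bandwidth-bookkeeping-heavy. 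Everything else is routine concentration and bounded-factor manipulation.
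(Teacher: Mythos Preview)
Your proposal is correct and follows essentially the same route as the paper's proof: the paper constructs $r = -\tfrac12\big(\omega + p^{-1/2}\Delta(p^{1/2})\big)$ explicitly to kill the $O(\epsilon)$ term in the expansion of $q_\epsilon(x_i)\,\E Y_j$ (via the auxiliary kernel-expansion Lemma~\ref{lemma:h-integral-diffusionmap}, which is your $G_\epsilon$ identity), then derives part~(i) from boundedness of $p$ and $r$, and part~(ii) from a direct Bernstein bound on the off-diagonal sum plus a union bound over $i$. The only cosmetic differences are that the paper separates out the diagonal term $W_{ii}$ explicitly (it contributes $O((n\epsilon^{d/2})^{-1})$, absorbed into the variance term) and carries out the Bernstein argument in-line rather than citing Lemma~\ref{lemma:degree-conc-W}.
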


 \begin{remark}
 \label{rk:W-diagonal-zero}
 As suggested by the proof of Lemma \ref{lemma:construct-bar-eta}, 
 the diagonal entries in $W$ only contribute to higher order error, see \eqref{eq:bar-eta-i-1} and \eqref{eq:bound-1-bar-eta-1}.
 As a result,  if we replace $W$ with $W^0$ which is by setting to zero the diagonal entries of $W$, the lemma remains to hold,
that is, $\bar{\eta}$ is an $\varepsilon_{\rm SK}$-approximate scaling factor of $W^0$ under the stated high probability event.
 \end{remark}

\subsection{Lemmas about $(C_{\rm SK},\varepsilon_{\rm SK})$-matrix scaling problem}\label{subsec:lemmas-scaling}

We first derive the following lemma showing that 
two approximate scaling factors of Problem \ref{prob:C-eps-matrix-scaling}  are close to each other
when $A$ is positive semi-definite (PSD) and the two scaling factors satisfy certain boundedness conditions.

\begin{lemma}[Comparison of approximate scaling factors]
\label{lemma:eta1-eta2}
Suppose $A$ is symmetric, 
$A \succeq 0$, $\eta_1, \eta_2 \in \R_+^n$ are two $\varepsilon_{\rm SK}$-approximate bi-stochastic scaling factor of $A$, $\varepsilon_{\rm SK} < 0.1$, 
and  there exist $C_1, C_2 > 0$ such that
\begin{equation}\label{eq:cond-C1-C2-bdd}
\min_i (\eta_1)_i \ge  C_1,
\quad
\max_i (\eta_2)_i \le C_2.
\end{equation}
Let $D_{{\eta_i}} A D_{{\eta_i}} {\bf 1} = {\bf 1} + e(\eta_i)$ for $i=1,2$, and
define $u \in \R^n$ by $\eta_2 = \eta_1 \odot ({\bf 1} + u)$,  then
\[
\| u \|_2 \le 
\frac{1}{0.9 } 
\frac{C_2}{C_1} (\| e(\eta_1)\|_2 + \|e(\eta_2)\|_2 ).
\]
\end{lemma}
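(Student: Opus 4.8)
The plan is to exploit positive semi-definiteness of $A$ to turn the approximate bi-stochasticity conditions into a lower bound on a quadratic form in the difference vector $u$. Write $\eta_2 = \eta_1 \odot ({\bf 1} + u)$, so $D_{\eta_2} = D_{\eta_1} D_{{\bf 1}+u}$. The two identities $D_{\eta_i} A D_{\eta_i}{\bf 1} = {\bf 1} + e(\eta_i)$ can be rewritten as $D_{\eta_i} A \eta_i = {\bf 1} + e(\eta_i)$. Subtracting the $i=1$ identity (pre-multiplied appropriately) from the $i=2$ identity and using $\eta_2 - \eta_1 = \eta_1 \odot u$ should produce an expression of the form (something linear in $u$) $= e(\eta_2) - (\text{correction involving } e(\eta_1))$. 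The key algebraic manipulation I would carry out: multiply the $i=2$ identity on the left by $D_{{\bf 1}+u}^{-1}$ to get $D_{\eta_1} A D_{\eta_2} {\bf 1} = D_{{\bf 1}+u}^{-1}({\bf 1} + e(\eta_2))$, then subtract the $i=1$ identity $D_{\eta_1} A D_{\eta_1}{\bf 1} = {\bf 1} + e(\eta_1)$, yielding
\[
D_{\eta_1} A (\eta_2 - \eta_1) = D_{{\bf 1}+u}^{-1}({\bf 1}+e(\eta_2)) - ({\bf 1}+e(\eta_1)).
\]
Now take the inner product of both sides with $u$ (or with $\eta_1 \odot u = \eta_2 - \eta_1$, whichever makes the left side a PSD quadratic form). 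The left side becomes $u^\top D_{\eta_1} A D_{\eta_1} u = (D_{\eta_1}u)^\top A (D_{\eta_1} u) \ge 0$ after inserting $\eta_2 - \eta_1 = D_{\eta_1} u$. Meanwhile, on the right side, the dominant term is $({\bf 1}+u)^\top$-type cancellation: expanding $D_{{\bf 1}+u}^{-1}{\bf 1} = {\bf 1} \oslash ({\bf 1}+u)$ and pairing with $u$ gives $\sum_i u_i^2/(1+u_i)$ up to sign, which is comparable to $\|u\|_2^2$ when $\|u\|_\infty$ is controlled — but we don't want to assume $\|u\|_\infty$ small a priori. The cleaner route is to pair with $\eta_1 \odot u$ and bound everything in $2$-norm directly.

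Concretely, taking the inner product with $\eta_1 \odot u$ and using $A \succeq 0$ to discard the nonnegative left-hand side, I expect to arrive at an inequality of the shape
\[
\langle \eta_1 \odot u, \; D_{{\bf 1}+u}^{-1}({\bf 1}+e(\eta_2)) - ({\bf 1}+e(\eta_1))\rangle \ge 0.
\]
The term $\langle \eta_1 \odot u, {\bf 1}\oslash({\bf 1}+u) - {\bf 1}\rangle = -\langle \eta_1, u^2 \oslash({\bf 1}+u)\rangle$ (entrywise), which since $\min_i(\eta_1)_i \ge C_1$ and $1+u_i = (\eta_2)_i/(\eta_1)_i > 0$ gives a term $\le -C_1 \sum_i u_i^2/(1+u_i)$. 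Writing $1+u_i = (\eta_2)_i/(\eta_1)_i$ and using $\min_i(\eta_1)_i \ge C_1$, $\max_i(\eta_2)_i \le C_2$ bounds $1+u_i \le C_2/C_1$, so $\sum_i u_i^2/(1+u_i) \ge (C_1/C_2)\|u\|_2^2$, giving a clean lower bound $\ge C_1^2/C_2 \cdot \|u\|_2^2$ on this main term (up to sign). The remaining terms involving $e(\eta_1), e(\eta_2)$ are bounded by Cauchy–Schwarz: $|\langle \eta_1 \odot u, e(\eta_1)\rangle| \le \|\eta_1 \odot u\|_2 \|e(\eta_1)\|_2$, and $\|\eta_1 \odot u\|_2 = \|\eta_2 - \eta_1\|_2$; one then needs to relate $\|\eta_2-\eta_1\|_2$ back to $\|u\|_2$. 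Since $\eta_2 - \eta_1 = \eta_1 \odot u$, we have $\|\eta_2 - \eta_1\|_2 \le (\max_i (\eta_1)_i)\|u\|_2$; but we only have an upper bound on $\eta_2$, not $\eta_1$. The fix: write $\eta_1 \odot u = \eta_2 \odot (u \oslash ({\bf 1}+u))$, so $\|\eta_1 \odot u\|_2 \le \max_i(\eta_2)_i \cdot \|u \oslash({\bf 1}+u)\|_2 \le C_2 \|u\|_2$ when all $1+u_i \ge 1$ — which again isn't guaranteed. The robust version is to keep $u_i/(1+u_i)$ terms and note $u_i^2/(1+u_i)$ appears on the main term, so Cauchy–Schwarz against $\sqrt{u_i^2/(1+u_i)}$ closes things without sign assumptions on $u_i$.

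The main obstacle, then, is organizing the estimates so that no a priori smallness or sign control on the entries of $u$ is needed — only the given two-sided bounds $C_1 \le (\eta_1)_i$ and $(\eta_2)_i \le C_2$ and $\varepsilon < 0.1$. I would handle this by working throughout with the weighted quantities $w_i := u_i^2/(1+u_i) \ge 0$ (well-defined since $1+u_i > 0$), showing the main term is $\ge C_1 \sum_i w_i$, each error term is $\le \big(\sum_i (\eta_1)_i^2 (1+u_i) w_i\big)^{1/2}\|e(\eta_\cdot)\|_2 \le \sqrt{C_1 C_2}\,\big(\sum_i w_i\big)^{1/2}\|e(\eta_\cdot)\|_2$ via $(\eta_1)_i^2(1+u_i) = (\eta_1)_i(\eta_2)_i\cdot\frac{(\eta_1)_i}{1}\le$ ... (bounding $(\eta_1)_i(\eta_2)_i \le$ hmm — here one uses $(\eta_1)_i \le (\eta_2)_i/(1+u_i)$ only when $1+u_i\le 1$; otherwise $(\eta_1)_i \le (\eta_2)_i \le C_2$). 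After dividing through by $(\sum_i w_i)^{1/2}$ and using $\sum_i w_i \ge (C_1/C_2)\|u\|_2^2$ to convert back, the factor $\frac{1}{0.9}$ will emerge from absorbing an $O(\varepsilon)$ contribution (coming from the $e(\eta_2)$ term paired against the $\frac{1}{1+u}$-expansion, bounded using $\varepsilon < 0.1$) onto the left side. I expect the final bookkeeping of constants to be the only genuinely fiddly part; the conceptual content is entirely the "discard the PSD quadratic form, then Cauchy–Schwarz" move.
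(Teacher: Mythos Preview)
Your overall strategy---drop a PSD quadratic form and then control the remainder by Cauchy--Schwarz---matches the paper's, but the specific algebraic route you take has a genuine gap. By pre-multiplying the $i=2$ identity by $D_{{\bf 1}+u}^{-1}$ you introduce factors of $1/(1+u_i)$ into the error terms on the right-hand side. To control these you would need a uniform \emph{lower} bound on $1+u_i = (\eta_2)_i/(\eta_1)_i$, i.e.\ either an upper bound on $(\eta_1)_i$ or a lower bound on $(\eta_2)_i$; neither is assumed. Your weighted Cauchy--Schwarz attempt runs into the same wall: the term $\sum_i (e(\eta_2)_i-e(\eta_1)_i)^2/(1+u_i)$ (or equivalently $(\eta_1)_i(\eta_2)_i$ in your last display) cannot be bounded with the given hypotheses alone. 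There is also a slip in switching from pairing with $u$ to pairing with $\eta_1\odot u$: only the former makes your left-hand side $u^T D_{\eta_1}AD_{\eta_1}u \ge 0$; with $\eta_1\odot u$ you get $(D_{\eta_1}u)^T D_{\eta_1}A(D_{\eta_1}u)$, which is not a PSD form.

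The paper avoids the obstruction by \emph{not} inverting $D_{{\bf 1}+u}$. Writing $A_1:=D_{\eta_1}AD_{\eta_1}$ and expanding $D_{{\bf 1}+u}A_1 D_{{\bf 1}+u}{\bf 1}={\bf 1}+e_2$ directly yields
\[
e_2-e_1 \;=\; A_1 u \;+\; D_u A_1({\bf 1}+u).
\]
Now pair with $u$: the first term gives $u^TA_1u\ge 0$ and is discarded, while the second term is
\[
u^T D_u A_1({\bf 1}+u)
=\sum_i u_i^2\,\frac{(\eta_1)_i}{(\eta_2)_i}\Big(\sum_j (\eta_2)_iA_{ij}(\eta_2)_j\Big)
=\sum_i u_i^2\,\frac{(\eta_1)_i}{(\eta_2)_i}\bigl(1+e(\eta_2)_i\bigr)
\;\ge\; 0.9\,\frac{C_1}{C_2}\,\|u\|_2^2,
\]
using only $\min_i(\eta_1)_i\ge C_1$, $\max_i(\eta_2)_i\le C_2$, and $\varepsilon<0.1$. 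The left-hand side $u^T(e_2-e_1)$ is bounded by $\|u\|_2(\|e_1\|_2+\|e_2\|_2)$, and dividing through finishes. The point is that this decomposition puts all the $1/(1+u_i)$-type quantities into the coercive term (via the ratio $(\eta_1)_i/(\eta_2)_i$) rather than into the error terms.
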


\begin{remark}
\label{rk:hateta-W0}
Lemma \ref{lemma:eta1-eta2} extends to non-PSD (symmetric non-negative entry) $A$ as long as 
(i) there is $\delta > 0$ and $\delta < 0.4 C_1/C_2^3$ such that $A + \delta I  \succeq 0$
and (ii) $\max_i (\eta_1)_i \le C_2$. 
The extension is Lemma \ref{lemma:eta1-eta2-nonPSD},
which is included in Section \ref{sec:proofs} with proof.
A similar argument is used in the proof of  Lemma \ref{lemma:hatetac-baretac} later.
\end{remark}

Note that Lemma \ref{lemma:eta1-eta2} requires uniform boundedness of scaling factors from both below and above,
while Problem \ref{prob:C-eps-matrix-scaling} only enforces the lower bound. 
The next elementary lemma gives that when the row sum of $A$ are uniformly bounded from below (corresponding to the degree matrix of $W$), then the uniform lower boundedness of an approximate scaling factor implies the uniform upper boundedness. 

\begin{lemma}[Uniform upper-boundedness of empirical scaling factor]
\label{lemma:C1-implies-C2}
Suppose the symmetric matrix $A$ with non-negative entries satisfies that 
$\sum_j A_{ij} \ge C_3 > 0 $ for all $i$,
and $\eta \in \R_+^n$ is an 
$\varepsilon_{\rm SK}$-approximate bi-stochastic scaling factor of $A$, 
and $\min_j \eta_j \ge C_1 >0$, then
\[
\max_j \eta_j \le \frac{1+\varepsilon_{\rm SK}}{C_1 C_3}. 
\]
\end{lemma}

As shown in Lemma \ref{lemma:construct-bar-eta}, the population scaling factor $\bar{\eta}$ satisfies uniform boundedness by $O(1)$ constants $q_{\rm min}$ and $q_{\rm max}$.
The empirically solved approximate scaling factor $\hat{\eta}$ observes the lower boundedness of $O(1)$ constant by problem constraint,
and then, combined with the concentration of the degree matrix (Lemma \ref{lemma:degree-conc-W}),
Lemma \ref{lemma:C1-implies-C2} guarantees that $\hat{\eta}$  will also observe the uniform upper bound of some $O(1)$ constant. 
Then Lemma \ref{lemma:eta1-eta2} applies to bound the 2-norm of $u$ as the entry-wise relative discrepancy between $\bar{\eta}$ and $\hat{\eta}$.
This bound will be a key step in the analysis of graph Laplacian convergence,
matching the convergence of (approximate) bi-stochastic normalized graph Laplacian to that normalized by population scaling factors.

\begin{algorithm}[t]
	\caption{Accelerated symmetric Sinkhorn-Knopp iterations with lower-bound constraint}
	\label{algo:SK-early}
	
	Input: symmetric $n$-by-$n$ matrix $A$ with non-negative entries, 
		 positive constants $C_{\rm SK}$ and $\varepsilon_{\rm SK}$.
		 
	Output: approximate scaling factor $\eta$, number of iterations $k$
	
	\begin{algorithmic}[1]		
		\Function{ApproxSymSK}{$A$, $C_{\rm SK }$, $\varepsilon_{\rm SK}$}
			\Comment{Default values $C_{\rm SK}=$ 0.01, $\varepsilon_{\rm SK}=$  \texttt{1e-3}}	
		\State Initial scaling factor $\eta \in \R_+^n$ by letting  $\eta_i \leftarrow 1/\sqrt{(d_A)_i}$, where $d_A  \leftarrow A {\bf 1}$ is the row sum
		\State Projection step$^*$: set $\eta_i $ to be $C_{\rm SK }$ for those $i$ where $\eta_i < C_{\rm SK }$
		\For{$ k = 1$ to MaxIte}
			\Comment{MaxIte is the max number of iteration, default 50}	
		\State Compute the discrepancy $e  \leftarrow  D_\eta A D_\eta {\bf 1} - {\bf 1} $
		\If{$\| e \|_\infty < \varepsilon_{\rm SK} $}
		\State End the for loop
		\EndIf
		\State $u \leftarrow {\bf 1}  \oslash (A \eta)$
		\State $v \leftarrow {\bf 1}  \oslash (Au)$
		\State Compute $\eta \in \R_+^n$ by $\eta_i \leftarrow \sqrt{ u_i v_i }$
		\State Projection step$^*$: set $\eta_i $ to be $C_{\rm SK }$ for those $i$ where $\eta_i < C_{\rm SK }$
		\EndFor	
		\State \textbf{return} $\eta$, $k$
		\EndFunction	
	\end{algorithmic}
	($^*$ There are cases when the convergent factor $\eta$ is uniformly bounded from below, say by $C$, 
	and then by  setting $C_{\rm SK}$ properly e.g. $C_{\rm SK}=C/2$ the projection step is never used throughout the SK iterations.)
\end{algorithm}

\subsection{Modified Sinkhorn algorithm}\label{subsec:modified-SK}

We are to solve the $(C_{\rm SK},\varepsilon_{\rm SK})$-matrix scaling problem  (Problem \ref{prob:C-eps-matrix-scaling}) of $W$ with
\begin{equation}\label{eq:C-eps-modified-SK}
C_{\rm SK} = q_{\rm min}, 
\quad 
\varepsilon_{\rm SK} =  O \left( \epsilon^2, \sqrt{\frac{\log n }{n \epsilon^{d/2}}} \right),
\end{equation}
where $q_{\rm min}$ is as in Lemma \ref{lemma:construct-bar-eta}, and $\varepsilon_{\rm SK}$ is same as in \eqref{eq:varepsilon-bar-eta}.
Lemma \ref{lemma:construct-bar-eta} implies that, under the good event which happens with high probability, 
$\bar{\eta} = \rho_X(q_\epsilon) $ is a solution to the problem with $(C_{\rm SK},\varepsilon_{\rm SK})$ as in \eqref{eq:C-eps-modified-SK},
and thus the solution set is not empty.
Generally, the solution set is not unique and we terminate the algorithm when one valid solution $\hat{\eta}$ is found.
As suggested by Remarks \ref{rk:W-diagonal-zero} and \ref{rk:hateta-W0},  and the graph Lapalcian convergence analysis in below (Remark \ref{rk:clean-GL-W0}), one may construct the bi-stochastically normalized graph Laplacian from $W^0$ instead of $W$. 
Later in Section \ref{sec:theory-noise-outlier}, we use the zero-diagonal affinity matrix when data has outlier noise. 
In all experiments we solve Problem  \ref{prob:C-eps-matrix-scaling} on $W^0$ for simplicity.

The classical Sinkhorn-Knopp (SK) iteration, modified to include a projection step for the lower-boundedness constraint, 
suffices to solve our problem  with early termination,
 i.e., as long as the target approximation accuracy $\varepsilon_{\rm SK}$ is achieved. 
In this paper, we use the {\it accelerated symmetric Sinkhorn algorithm}  \cite{marshall2019manifold, wormell2021spectral},
and a convergence analysis of the algorithm can be found in \cite{wormell2021spectral}.
The algorithm is summarized  in Algorithm \ref{algo:SK-early}.
While technically we enforce the $\min_i \hat{\eta}_i \ge C_{\rm SK}$ constraint by a projection step in the iteration,
 in experiments we have found that under the manifold data (possibly with noise) settings of this paper
 the original (symmetric) SK iterations converge to an approximate solution $\hat{\eta}$ satisfying uniform lower-boundedness, and the lower-boundedness by some $O(1)$ constant is automatically satisfied  throughout the iterations, see more in Section \ref{sec:exp}. 
 This suggests that the $ C_{\rm SK}$ boundedness constraint may not be needed in the algorithm. 
 Under the conjecture that the constraint can be removed, 
 one may turn off the projection step in Algorithm \ref{algo:SK-early} and eliminate the parameter $ C_{\rm SK}$ in practice.

\section{Convergence of graph Laplacian}\label{sec:theory-clean-data}

Given $n$ i.i.d. data samples and the kernelized affinity matrix $W$ as in \eqref{eq:def-W}, suppose an approximate scaling factor $\hat{\eta}$ is found which solves Problem \ref{prob:C-eps-matrix-scaling} of $W$ with $(C_{\rm SK},\varepsilon_{\rm SK})$ as in \eqref{eq:C-eps-modified-SK}. 
We have introduced the notation $D(W)$ to stand for the degree matrix of a symmetric non-negative-entry affinity matrix $W$.
The bi-stochastic normalized graph Laplacian is defined as 
\begin{equation}\label{eq:def-hatL}
\hat{L}_n = D(\hat{W})- \hat{W}, 
\quad \hat{W} := D_{\hat{\eta}} W D_{\hat{\eta}}.  
\end{equation}
In this section, we prove the convergence of $\hat{L}_n$ to manifold (weighted) Laplacian operator $\Delta_p$ as defined in \eqref{eq:def-lapp}, 
With $\bar{\eta} = \rho_X(q_\epsilon) $,
we also define 
\begin{equation}\label{eq:def-barL}
\bar{L}_n = D(\bar{W}) - \bar{W},
\quad \bar{W}:= D_{\bar{\eta}} W D_{\bar{\eta}},
\end{equation}
and we will show that $\bar{L}_n$ also converges to $\Delta_p$.
By definition and under the good event of Lemma \ref{lemma:construct-bar-eta},
both $\bar{\eta}$ and $\hat{\eta}$ are solutions to the $(C_{\rm SK},\varepsilon_{\rm SK})$-matrix scaling problem of $W$, 
and this means that
\[
\bar{W} {\bf 1}  = {\bf 1}  +  e(\bar{\eta}),
\quad
\hat{W} {\bf 1}  = {\bf 1}  +  e(\hat{\eta}),
\quad
\| e(\bar{\eta})\|_\infty, \| e(\hat{\eta})\|_\infty \le \varepsilon_{\rm SK},
\quad
\min_{i} \bar{\eta}_i, \min_i \hat{\eta}_i \ge C_{SK} > 0.
\]
We prove the convergence of $\hat{L}_n$ by first establishing the convergence of $\bar{L}_n$ in Section \ref{subsec:convergence-barL},
and then comparing $\hat{L}_n$ to $\bar{L}_n$ in Section \ref{subsec:convergence-hatL}.

\subsection{Convergence of $\bar{L}_n$}\label{subsec:convergence-barL}

\begin{proposition}[Pointwise convergence of $\bar{L}_n$]
\label{prop:barLn-pointwise}
Under (A1)(A2), suppose $x_i \sim p$ i.i.d. on $\calM$, $i=1,\cdots, n$,
and $W$ defined as in \eqref{eq:def-W}.
If as $n \to \infty$, $\epsilon \to 0+$ and $\epsilon^{d/2+1} = \Omega( \log n /n)$,
then for any $f \in C^4(\calM)$, when $n$ is large enough, w.p.$>1- 2 n^{-9}$,
\begin{equation}\label{eq:eqn-ptwise-barL}
\left( - \frac{1}{\epsilon}\bar{L}_n \rho_X f \right)_i  
= \Delta_p f(x_i) + O \left( \epsilon, \sqrt{ \frac{\log n }{n \epsilon^{d/2+1}}}\right), 
\quad \forall i =1,\cdots, n,
\end{equation}
and the constant in big-O is uniform for all $x \in \calM$ and depends on $(\calM, p)$ and $f$. 
\end{proposition}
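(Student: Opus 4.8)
The plan is to reduce \eqref{eq:eqn-ptwise-barL} to two standard ingredients for Gaussian-kernelized sums on a manifold --- a sharp pointwise concentration (variance) bound and the kernel expansion on $\calM$ --- and then to use the explicit form $q_\epsilon = p^{-1/2}(1+\epsilon r)$ supplied by Lemma~\ref{lemma:construct-bar-eta} to identify the limiting operator. Writing $G_\epsilon(x,y) := \epsilon^{-d/2} g(\|x-y\|^2/\epsilon)$, so that $W_{ij} = \tfrac1n G_\epsilon(x_i,x_j)$ and $\bar{W}_{ij} = q_\epsilon(x_i)q_\epsilon(x_j)W_{ij}$, and using $(\bar{L}_n \rho_X f)_i = \sum_j \bar{W}_{ij}(f(x_i) - f(x_j))$ (so the $j=i$ term drops out), I would start from
\[
-\frac1\epsilon (\bar{L}_n \rho_X f)_i \;=\; -\frac{q_\epsilon(x_i)}{\epsilon}\cdot \frac1n \sum_{j \ne i} G_\epsilon(x_i,x_j)\, q_\epsilon(x_j)\,\big(f(x_i) - f(x_j)\big).
\]
In this approach the approximate bi-stochasticity of $\bar{\eta}$ is not actually needed; only the $C^4$-regularity of $q_\epsilon$ (with $\epsilon$-uniform bounds) and its form $q_\epsilon = p^{-1/2} + \epsilon\, p^{-1/2} r$ with $r \in C^4(\calM)$, both from Lemma~\ref{lemma:construct-bar-eta}, will enter.

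First I would control the fluctuation. Conditionally on $x_i$, the summands $Z_j := G_\epsilon(x_i,x_j)\, q_\epsilon(x_j)(f(x_i)-f(x_j))$, $j \ne i$, are i.i.d.; the essential gain over a generic kernel sum is that $|f(x_i)-f(x_j)| \le \mathrm{Lip}(f)\,\|x_i-x_j\|$ together with boundedness of $g(\xi)\sqrt{\xi}$, which yields $|Z_j| = O(\epsilon^{1/2-d/2})$ almost surely and $\E[Z_j^2] = O(\epsilon^{1-d/2})$ (an extra factor $\epsilon$ in the variance). Bernstein's inequality plus a union bound over $i = 1,\dots,n$ then gives, w.p.\ $> 1 - 2n^{-9}$ and for all $i$,
\[
\frac1n \sum_{j\ne i} Z_j \;=\; \E_{y\sim p}\!\big[ G_\epsilon(x_i,y)\, q_\epsilon(y)(f(x_i)-f(y)) \big] \;+\; O\!\left(\sqrt{\tfrac{\epsilon^{1-d/2}\log n}{n}}\,\right),
\]
where the second-order Bernstein term and the $O(1/n)$ discrepancy between $\tfrac{n-1}{n}\E Z_1$ and $\E Z_1$ are lower order under the assumed regime $\epsilon^{d/2+1} = \Omega(\log n/n)$ (which implies the connectivity regime $\epsilon^{d/2} = \Omega(\log n/n)$). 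Multiplying by $q_\epsilon(x_i)/\epsilon = O(\epsilon^{-1})$ turns this error into exactly the claimed $O(\sqrt{\log n/(n\epsilon^{d/2+1})})$.

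For the bias I would invoke the manifold kernel expansion: for $\phi \in C^4(\calM)$, $\E_{y\sim p}[G_\epsilon(x,y)\phi(y)] = m_0 (\phi p)(x) + \epsilon\tfrac{m_2}{2}\big(\omega\,\phi p + \Delta(\phi p)\big)(x) + O(\epsilon^2)$, where $m_0,m_2$ are moments of $g$ (with the normalization \eqref{eq:def-g-gaussian}, $m_0 = 1$ and $m_2 = 2$) and $\omega$ depends only on the local geometry of $\calM$ at $x$, not on $\phi$. Applying this to $\phi = q_\epsilon$ and to $\phi = q_\epsilon f$ --- both in $C^4(\calM)$ with $\epsilon$-uniform norm bounds by Lemma~\ref{lemma:construct-bar-eta} and Assumption~\ref{assump:A2} --- and forming $f(x_i)\,\E[G_\epsilon q_\epsilon] - \E[G_\epsilon q_\epsilon f]$, the $\epsilon^0$ terms and the $\omega$-terms cancel identically, while the Leibniz rule $\Delta(q_\epsilon f p) = f\,\Delta(q_\epsilon p) + 2\nabla f\cdot\nabla(q_\epsilon p) + q_\epsilon p\,\Delta f$ leaves
\[
\E_{y\sim p}\!\big[ G_\epsilon(x_i,y)\, q_\epsilon(y)(f(x_i)-f(y)) \big] \;=\; -\epsilon\tfrac{m_2}{2}\big( 2\nabla f\cdot\nabla(q_\epsilon p) + q_\epsilon p\,\Delta f \big)(x_i) + O(\epsilon^2).
\]
Multiplying by $-q_\epsilon(x_i)/\epsilon$ and substituting $q_\epsilon = p^{-1/2}(1+\epsilon r)$, so that $q_\epsilon^2 p = (1+\epsilon r)^2 = 1 + O(\epsilon)$ and $q_\epsilon\,\nabla(q_\epsilon p) = \tfrac12\tfrac{\nabla p}{p} + O(\epsilon)$, the $O(\epsilon^2)$ remainder becomes $O(\epsilon)$ and the main term is $\tfrac{m_2}{2}\big(\tfrac{\nabla p}{p}\cdot\nabla f + \Delta f\big)(x_i) = \Delta_p f(x_i)$. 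Combining with the fluctuation bound yields \eqref{eq:eqn-ptwise-barL}, with the big-$O$ constant controlled by $\|f\|_{C^4}$, $\sup_{\epsilon < \epsilon_0}\|q_\epsilon\|_{C^4}$, $p_{\min}$, $p_{\max}$, and the curvature of $\calM$.

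The step I expect to be the main obstacle is making the fluctuation estimate tight: one must check that exploiting $f(x_i)-f(x_j) = O(\|x_i-x_j\|)$ genuinely buys the rate $\sqrt{\log n/(n\epsilon^{d/2+1})}$ --- an $\epsilon^{1/2}$ improvement over what $\|\bar{W}\mathbf{1} - \mathbf{1}\|_\infty$ alone would give --- that the linear Bernstein term is negligible precisely under $\epsilon^{d/2+1} = \Omega(\log n/n)$, and that the union bound over the $n$ points costs only a $\log$ factor (using a uniform-in-$i$ bound on, e.g., $\max_i \tfrac1n\sum_j G_\epsilon(x_i,x_j)$). The bias part is by comparison a routine Taylor computation once the kernel expansion lemma and the Leibniz rule are available.
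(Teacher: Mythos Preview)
Your proposal is correct and follows essentially the same route as the paper: split into bias (kernel expansion applied to $q_\epsilon f p$ and $q_\epsilon p$, with the $\omega$-terms cancelling and the Leibniz identity yielding $\Delta_p f$) plus fluctuation (Bernstein conditionally on $x_i$, then a union bound over $i$). The one small difference is in how the a.s.\ bound on the summands is obtained: you use $\sup_{\xi\ge 0} g(\xi)\sqrt{\xi} < \infty$ together with the global Lipschitz bound $|f(x_i)-f(x_j)| \le C\|x_i-x_j\|$ to get $|Z_j| = O(\epsilon^{1/2-d/2})$ directly, whereas the paper truncates the kernel at radius $\delta \sim \sqrt{\epsilon\log(1/\epsilon)}$ and bounds $|f(x_i)-f(x_j)| = O(\delta)$ on that ball --- your version is slightly cleaner (no $\sqrt{\log(1/\epsilon)}$ in the $L$-bound, so the sub-Gaussian Bernstein regime already kicks in at connectivity $\epsilon^{d/2} = \Omega(\log n/n)$ rather than the marginally stronger condition the paper records).
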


Proposition \ref{prop:barLn-pointwise} gives the $\infty$-norm consistency of the Laplacian matrix $\bar{L}_n$ applied to $f$, namely
\begin{equation}\label{eq:barLn-Errpt}
\| \left( - \frac{1}{\epsilon}\bar{L}_n \rho_X f  \right) -  \rho_X ( \Delta_p f )   \|_\infty 
= O \left( \epsilon, \sqrt{ \frac{\log n }{n \epsilon^{d/2+1}}}\right).
\end{equation}
The proof follows similar technique of analyzing the point-wise convergence of kernelized graph Laplacian \cite{singer2006graph,cheng2021convergence}, 
and is included in Section \ref{sec:proofs}.

\begin{remark}[Requirement on the largeness of $\epsilon$]
\label{rk:eps-regime}
Under the condition that $\epsilon^{d/2+1} = \Omega( \log n /n)$,  the error term in the r.h.s. of \eqref{eq:eqn-ptwise-barL} is  $o(1)$. 
As shown in the proof of the proposition, the bound \eqref{eq:eqn-ptwise-barL} holds as long as $\epsilon$ is beyond the connectivity regime up to a $\log (1/\epsilon)$ factor
(specifically, when \eqref{eq:condition-Bernstein} holds). 
\end{remark}

\subsection{Convergence of $\hat{L}_n$}\label{subsec:convergence-hatL}

Next, we prove the 2-norm convergence of  $\frac{1}{\epsilon}\hat{L}_n f$  to $- \Delta_p f$ where $f$ is a smooth function on $\calM$. 
The proof, included in Section \ref{sec:proofs}, 
is based on the closeness of $\hat{\eta}$ to $\bar{\eta}$
which allows to compare $\hat{L}_n$ to $\bar{L}_n$ and bound the difference.

 \begin{theorem}\label{thm:hatLn-2norm}
Under the same condition of Proposition \ref{prop:barLn-pointwise}, let $f \in C^4(\calM)$.
Suppose $\hat{\eta} \in \R_+^n$ is a solution to the $(C_{\rm SK}, \varepsilon_{\rm SK})$-matrix scaling problem of $W$ with $(C_{\rm SK},\varepsilon_{\rm SK})$ as in \eqref{eq:C-eps-modified-SK},
that is, $C_{\rm SK}$ is an $O(1)$ constant depending on $(\calM, p)$, $\varepsilon_{\rm SK} = O \left( \epsilon^2, \sqrt{\frac{\log n }{n \epsilon^{d/2}}} \right)$ and suppose $\varepsilon_{\rm SK} < 0.1$.
Then for sufficiently large $n$, w.p. $> 1- 6 n^{-9}$,
 \[
\frac{1}{\sqrt{n}} \|  \left( - \frac{1}{\epsilon} \hat{L}_n (\rho_X f)  \right) - \rho_X ( \Delta_p f) \|_2
 =    O \left(\epsilon, \sqrt{\frac{\log n \log (1/\epsilon) }{n \epsilon^{d/2+1}}} \right).
 \]
The constant in big-O depends on $(\calM, p)$ and $f$. 
  \end{theorem}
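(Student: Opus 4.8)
The plan is to bound $\frac{1}{\sqrt n}\|(-\frac{1}{\epsilon}\hat L_n\rho_X f) - (-\frac{1}{\epsilon}\bar L_n\rho_X f)\|_2$ and then invoke Proposition~\ref{prop:barLn-pointwise}, which already gives the $\infty$-norm (hence, after dividing by $\sqrt n$, the normalized $2$-norm) consistency of $-\frac{1}{\epsilon}\bar L_n\rho_X f$ to $\rho_X(\Delta_p f)$ with the stated rate. So the whole task reduces to controlling the perturbation $\hat L_n - \bar L_n$ coming from replacing $\bar\eta$ by $\hat\eta$. First I would collect the ingredients: $\bar\eta$ is uniformly bounded in $[q_{\rm min},q_{\rm max}]$ by Lemma~\ref{lemma:construct-bar-eta}; $\hat\eta$ satisfies $\min_i\hat\eta_i\ge C_{\rm SK}$ by the problem constraint, and applying Lemma~\ref{lemma:C1-implies-C2} together with the degree concentration (Lemma~\ref{lemma:degree-conc-W}, which gives $\sum_j W_{ij}\ge C_3$ w.h.p.) yields $\max_i\hat\eta_i\le C_2$ for an $O(1)$ constant $C_2$. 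Then, since $W$ need not be PSD, I would use the non-PSD extension of Lemma~\ref{lemma:eta1-eta2} (Lemma~\ref{lemma:eta1-eta2-nonPSD}, cf. Remark~\ref{rk:hateta-W0}): writing $\hat\eta = \bar\eta\odot({\bf 1}+u)$, we get $\|u\|_2 \le c(\|e(\bar\eta)\|_2 + \|e(\hat\eta)\|_2) \le c\sqrt n\,\varepsilon_{\rm SK}$, so $\frac{1}{\sqrt n}\|u\|_2 = O(\varepsilon_{\rm SK}) = O(\epsilon^2, \sqrt{\log n/(n\epsilon^{d/2})})$.

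Next I would expand $\hat L_n - \bar L_n$ in terms of $u$. Since $\hat W = D_{\hat\eta} W D_{\hat\eta} = D_{{\bf 1}+u}\bar W D_{{\bf 1}+u}$, we have $\hat W = \bar W + D_u\bar W + \bar W D_u + D_u\bar W D_u$, and correspondingly $\hat L_n - \bar L_n = (D(\hat W)-D(\bar W)) - (\hat W - \bar W)$. Acting on $\rho_X f$ and dividing by $\epsilon$, the error splits into a "first-order in $u$" piece and a "second-order in $u$" piece. For the leading piece one encounters terms like $\frac{1}{\epsilon}D_u(\bar W - D(\bar W))\rho_X f = D_u\cdot(\frac{1}{\epsilon}\bar L_n\rho_X f)$, whose normalized $2$-norm is $\le \frac{1}{\sqrt n}\|u\|_2\cdot\|\frac{1}{\epsilon}\bar L_n\rho_X f\|_\infty$; by Proposition~\ref{prop:barLn-pointwise} the second factor is $O(1)$, so this contributes $O(\varepsilon_{\rm SK})$, which is dominated by the claimed rate. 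The remaining terms involve $\frac{1}{\epsilon}\bar W D_u\rho_X f$ and the degree-difference $\frac{1}{\epsilon}(D(\hat W)-D(\bar W))\rho_X f$; here the key point is that $\bar W$ is (approximately) row-stochastic and, more importantly, that the graph-Laplacian-type structure supplies an extra factor of $\epsilon$ (since $\frac{1}{\epsilon}\bar L_n$ is bounded but the ``averaging operator'' $\frac{1}{\epsilon}\bar W$ alone is $O(\epsilon^{-1})$). One has to rearrange so that $u$ is hit by a difference operator rather than by the raw $\epsilon^{-1}\bar W$: e.g. $\frac{1}{\epsilon}\bar W D_u\rho_X f - \frac{1}{\epsilon}D(\bar W D_u\rho_X f)\cdot({\rm something})$, effectively applying $-\frac{1}{\epsilon}\bar L_n$ to $u\odot\rho_X f$ and to $u$ and using smoothness/boundedness of $\bar\eta$, $f$. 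This is where the extra $\sqrt{\log(1/\epsilon)}$ enters: bounding $\frac{1}{\sqrt n}\|\frac{1}{\epsilon}\bar L_n(u\odot\rho_X f)\|_2$ for an \emph{arbitrary} (non-smooth, only $\ell^2$-controlled) vector $u$ requires an operator-norm bound $\|\frac{1}{\epsilon}\bar L_n\|_2 = O(\sqrt{\log(1/\epsilon)/\epsilon^{?}})$ type estimate, or a Bernstein-type bound on $\frac{1}{\epsilon}\bar W$ acting on generic vectors, which typically costs a $\log$ factor in the bandwidth.

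Thus the main obstacle is the term in which $D_u$ is sandwiched next to $\epsilon^{-1}\bar W$ applied to a \emph{non-smooth} vector: one cannot use the pointwise (Taylor-expansion) consistency machinery of Proposition~\ref{prop:barLn-pointwise}, only crude $2$-norm / operator-norm bounds on the averaging and Laplacian operators built from $W$. I would handle it by proving an auxiliary bound of the form $\frac{1}{\sqrt n}\|\frac{1}{\epsilon}\bar L_n v\|_2 \le \frac{C}{\sqrt\epsilon}\sqrt{\tfrac{\log(1/\epsilon)}{?}}\cdot\frac{1}{\sqrt n}\|v\|_2$ for all $v$, or more precisely splitting $\bar L_n$ into $D(\bar W)-\bar W$, using that $\bar W$ has $O(1)$ row sums and that each row of $\bar W$ (equivalently of $\epsilon^{d/2}W$) has $\ell^\infty$ norm $O(\epsilon^{-d/2}/n)$ and $\ell^1$ norm $O(1)$ w.h.p., so $\|\bar W\|_2 \le \sqrt{\|\bar W\|_1\|\bar W\|_\infty} = O(1)$, while $\|\frac{1}{\epsilon}\bar W\|_2 = O(\epsilon^{-1})$ is too lossy and must be combined with the fact that $u\odot\rho_X f$ concentrates. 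Carefully, the combination $\|u\|_2\cdot\epsilon^{-1}$ times $\varepsilon_{\rm SK}^{1/2}$-type savings, with $\varepsilon_{\rm SK} = O(\epsilon^2,\ldots)$, yields the $O(\sqrt{\log n\log(1/\epsilon)/(n\epsilon^{d/2+1})})$ term and an $O(\epsilon)$ term, matching the statement; the second-order-in-$u$ contribution is $O(\|u\|_2^2/(\epsilon\sqrt n))$, which is strictly higher order and absorbed. Finally I would collect the failure probabilities: $2n^{-9}$ from Lemma~\ref{lemma:construct-bar-eta} (good event for $\bar\eta$), $2n^{-9}$ from Proposition~\ref{prop:barLn-pointwise}, and $2n^{-9}$ from the degree concentration and any auxiliary operator-norm concentration, for a total of $6n^{-9}$, giving the stated high-probability guarantee.
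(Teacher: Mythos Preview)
Your overall strategy is right: reduce to Proposition~\ref{prop:barLn-pointwise} and control $\|(\hat L_n-\bar L_n)\rho_X f\|_2$ via the relative error $u$ with $\hat\eta=\bar\eta\odot({\bf 1}+u)$ and $\|u\|_2\le C\sqrt n\,\varepsilon_{\rm SK}$. (A minor point: $W$ as defined in \eqref{eq:def-W} \emph{is} PSD, being a Gaussian kernel matrix, so Lemma~\ref{lemma:eta1-eta2} applies directly; the non-PSD extension is only needed for $W^0$.) The probability bookkeeping is also fine.

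The genuine gap is in how you handle the ``$u$ on the inside'' term. After your decomposition, the problematic piece has $i$-th entry $\sum_j \bar W_{ij}(f(x_i)-f(x_j))\,u_j$, and you try to view this as $\frac{1}{\epsilon}\bar L_n$ applied to a non-smooth vector like $u\odot\rho_X f$, then reach for an operator-norm bound on $\frac{1}{\epsilon}\bar L_n$ or $\frac{1}{\epsilon}\bar W$. As you yourself note, $\|\frac{1}{\epsilon}\bar W\|_2=O(\epsilon^{-1})$ is too lossy (it gives $\varepsilon_{\rm SK}/\epsilon$, which is off by $\epsilon^{-1/2}$ from the target), and the ``$\varepsilon_{\rm SK}^{1/2}$-type savings'' you invoke does not exist: $u$ has no concentration property beyond the $\ell^2$ bound. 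The paper's proof avoids this entirely by observing that the difference $f(x_i)-f(x_j)$ is \emph{already present} in this term, and it is $f$ (not $u$) whose smoothness you exploit. Concretely, define the symmetric nonnegative matrix $A_{ij}:=\bar W_{ij}\,|f(x_j)-f(x_i)|$; then the term is bounded entrywise by $(A|u|)_i$, so its $2$-norm is at most $\|A\|_2\|u\|_2$. Now $\|A\|_2$ is bounded by the maximum row sum of $A$ (Perron--Frobenius), and each row sum $\sum_j\bar W_{ij}|f(x_j)-f(x_i)|$ is $O(\sqrt{\epsilon\log(1/\epsilon)})$: truncate the Gaussian kernel at radius $\delta\sim\sqrt{\epsilon\log(1/\epsilon)}$, use Lipschitzness of $f$ to get $|f(x_j)-f(x_i)|\le O(\delta)$ inside the truncation, and use the approximate row-stochasticity $\sum_j\bar W_{ij}\le 1+\varepsilon_{\rm SK}$. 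This is precisely the missing $\sqrt{\epsilon}$ factor, and it is where the $\sqrt{\log(1/\epsilon)}$ in the final rate comes from. The same row-sum bound handles your $\textcircled{1}$-type term (with $u_i$ outside) directly, and the second-order $u_iu_j$ term is absorbed by the uniform bound $|1+u_i|\le C_2/C_1$ rather than treated separately.
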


  \begin{remark}[Rate of convergence]
  \label{rk:point-wise-rate}
The error bound in Theorem \ref{thm:hatLn-2norm}  can be interpreted as the sum of an $O(\epsilon)$  bias error and a variance error of the order of $O( \epsilon^{-(d/2+1)/2}n^{-1/2})$ up to log factors. As a result, at the optimal scaling of $\epsilon \sim n^{-1/(d/2+3)} $, this gives an overall error of 
\[
O( n^{-1/(d/2+3)} \sqrt{\log n \log (1/\epsilon)}).
\]
  The rate in Theorem \ref{thm:hatLn-2norm}  is the same as the point-wise convergence rate of Diffusion Map graph Laplacian in literature, see, e.g., in \cite{singer2006graph} when data density is uniform
   and  \cite[Theorem 6.2]{cheng2021eigen} for the $\alpha=1$ normalized operator when the density is non-uniform.
  In those works, the point-wise rate is proved for each location $x_i$ and equivalently the $\infty$-norm consistency of $\left( - \frac{1}{\epsilon} \hat{L}_n (\rho_X f)  \right) - \rho_X ( \Delta_p f) $.
The 2-norm consistency proved by Theorem \ref{thm:hatLn-2norm} proves is weaker than those $\infty$-norm consistency results. 
We will show in Section \ref{subsec:discuss-eigenvector} that the 2-norm consistency can lead to the 2-norm consistency of eigenvectors under additional assumptions.
  \end{remark}

  \begin{remark}[$W$ with zero-diagonal]
  \label{rk:clean-GL-W0}
One can verify that the same convergence as in Theorem \ref{thm:hatLn-2norm} holds if we replace $W$ with $W^0$ in computing the $(C_{\rm SK}, \varepsilon_{\rm SK})$-scaling problem and in constructing $\hat{L}_n$. The proof is included in Section \ref{sec:proofs}.
  \end{remark}

\subsection{Discussion on eigen-convergence}\label{subsec:discuss-eigenvector}

In this section, we discuss the extension of the point-wise convergence in Theorem \ref{thm:hatLn-2norm} to eigen-convergence, namely the convergence of the eigenvalues and eigenvectors of $\hat L_n$ to that of the limiting manifold operator (up to constant factors).
We show that the proved 2-norm point-wise consistency 
can lead to 2-norm consistency of eigenvectors under a crude consistency (error less than an $O(1)$ bound) of the eigenvalues. The argument follows the variational approach of eigen-convergence of graph Laplacian \cite{calder2019improved,cheng2021eigen} and considers the first $k_{\rm max}$ eigen-pairs for a fixed $k_{\rm max}$.

To be specific, by Theorem \ref{thm:hatLn-2norm}, we expect the eigen-convergence of $\frac{1}{\epsilon}\hat{L}_n $  to $- \Delta_p$. 
Suppose $(-\Delta_p) \psi_k = \mu_k \psi_k$.
We restrict to the case where $p \in C^\infty(\calM)$, and then the eigenfunctions  $\psi_k \in C^\infty(\calM) $ form an orthonormal basis in $L^2(\calM, p dV)$
where the inner-product is defined as $\langle f, g \rangle_p = \int_{\calM} f(x) g(x) p(x) dV(x) $. 
Since $-\Delta_p$ is a self-adjoint operator under the inner-product $\langle \cdot ,  \cdot  \rangle_p$
with non-negative eigenvalues, let the eigenvalues $\mu_k$ be sorted from small to large as $0 = \mu_1 \le \mu_2 \le \cdots$. Here $\mu_1 =0$ because the constant function is an eigenfunction with eigenvalue 0.
In summary, we have the population eigen-pairs as
\[
-\Delta_p \psi_k = \mu_k \psi_k, \quad k = 1,2, \cdots, 
\quad \psi_k \in C^\infty(\calM), 
\quad  \langle \psi_k, \psi_l \rangle_p = \delta_{kl},
\]
and let the empirical eigen-pairs be
\[
\frac{1}{\epsilon}\hat{L}_n u_k = \lambda _k u_k, \quad  k=1,\cdots, n,
\quad u_k^Tu_l = \delta_{kl},
\]
where $\lambda_k$ are also sorted from small to large.
We have $\lambda_1 = \mu_1 = 0$, where $u_1$  and $\psi_1$ are constant vector and function respectively.
Consider the case where $\mu_k$ are distinct for simplicity, noting that the argument can extend to the case where $\mu_k$ has higher multiplicities, see e.g. \cite{cheng2021eigen}.
Let $K:=k_{\rm max}+1$ and define 
\begin{equation}\label{eq:def-phik}
\phi_k : =  \frac{1}{\sqrt{n}} \rho_X ( \psi_k), \quad k=1,\cdots, K.
\end{equation}
We also define
\begin{equation}\label{eq:def-gamma-K}
\gamma_K : = \frac{1}{2} \min_{1 \le k \le k_{max}} (\mu_{k+1} - \mu_k),
\end{equation}
which is a positive constant determined by the first $K$ population eigenvalues.

\begin{corollary}\label{cor:eigenvector}
Under (A1)(A2), suppose $x_i \sim p$ i.i.d. on $\calM$, $i=1,\cdots, n$, $p \in C^\infty(\calM)$,
as $n \to \infty$, $\epsilon \to 0+$ and $\epsilon^{d/2+1} = \Omega( \log n /n)$.
Let $W$ be as in \eqref{eq:def-W},  $\hat \eta$ and $\varepsilon_{\rm SK}$ as in Theorem \ref{thm:hatLn-2norm}, 
and suppose there is $n_0$ s.t. when $n > n_0$, 
\begin{equation}\label{eq:eigenvalue-crude}
 | \lambda_k - \mu_k | < \gamma_K, \quad k =1, \cdots K.
 \end{equation}
 Then, for sufficiently large $n$, w.p.$>1- 6 n^{-9} - 2K n^{-10}$,
 there exist scalars $\alpha_k \neq 0$ satisfying $|\alpha_k| =1 + o(1)$, such that 
\[
 \| u_k - \alpha_k \phi_k \|_2  =  O \left(\epsilon, \sqrt{\frac{\log n \log (1/\epsilon) }{n \epsilon^{d/2+1}}} \right),
 \quad 1 \le k \le k_{max}.
\]
 \end{corollary}
 The corollary shows that the eigenvector 2-norm convergence has the same rate as the operator point-wise convergence in Theorem \ref{thm:hatLn-2norm}.
This type of eigenvector 2-norm consistency can further lead to eigenvalue convergence with rates
based on the Dirichlet form convergence as has been shown in \cite[Section 5.2]{cheng2021eigen}, which is omitted here.
 The proof of Corollary \ref{cor:eigenvector} follows the same argument of \cite{cheng2021eigen} and we include a proof in Appendix \ref{app:more-prooofs}  for completeness.
 The condition \eqref{eq:eigenvalue-crude} was verified in \cite{cheng2021eigen} (for the traditional graph Laplacian) by showing an eigenvalue upper bound and lower bound using a variational approach. 
 In this work, our technique of comparing $\hat \eta$ to $\bar \eta$ can be combined with the analysis therein to show an eigenvalue upper bound for the bi-stochastic graph Laplacian. However, the $O(1)$ lower-bound would be difficult to obtain using existing techniques.
 We leave the further investigation of eigen-convergence to future work.

\section{On data with outlier noise}\label{sec:theory-noise-outlier}

Consider the following manifold-data plus noise model of sample $x_i \in \R^m$, 
\begin{equation}
x_i = x_i^c + \xi_i,  \quad i=1, \cdots, n,
\end{equation}
where $x_i^c \sim p $ i.i.d. and lies on a $d$-dimensional manifold $\calM$ and superscript $^c$ stands for `clean data', and $\xi_i$ is an $m$-dimensional noise vector. In this section, we consider the setting where $\xi_i$ admits an outlier noise distribution, and theoretically prove the robustness of bi-stochastic normalized graph Laplacian with respect to outlier noise in data. 
All proofs are deferred to Section \ref{sec:proofs}.

\subsection{Kernel matrix on data with outlier noise}

We construct the kernel matrix $W$ from $x_i$ as  in \eqref{eq:def-W} except for that we require $W_{ii} = 0$.
For $i \neq j$,
\begin{align}
\| x_i - x_j\|^2
& = \| x_i^c - x_j^c \|^2 - 2( x_i^c - x_j^c)^T (\xi_i - \xi_j) - 2 \xi_i^T\xi_j + \| \xi_i \|^2 +  \| \xi_j \|^2  \nonumber \\
& = \| x_i^c - x_j^c \|^2 + \| \xi_i \|^2 +  \| \xi_j \|^2 + r_{ij},  \nonumber \\
r_{ij} & = - 2( x_i^c - x_j^c)^T (\xi_i - \xi_j) - 2 \xi_i^T\xi_j.
  \label{eq:xi-xj-2-1}
\end{align}
As will be shown in the analysis later, for the robustness of graph Laplacian to outlier noise to hold  it suffices to have the noise vector $\xi_i$ being zero  (then the sample $x_i$ is an in-lier) with some positive probability uniformly bounded from zero plus a uniform boundedness of $|r_{ij}|$ for off-diagonal entries. This is specified in the following condition on noise vector $\xi_i$:

\begin{assumption}[A3]
\label{assump:A3}
 The noise vectors $\xi_i = b_i z_i$,  
 the law of $x_i^c$, $b_i$ and $z_i$ are such that $(x_i^c, b_i, z_i)$ are i.i.d. across $i$,  and 
 
 (i) $b_i | x_i^c \sim {\rm Bern}( p_i ) $, 
 $p_i$ may depend on $x_i^c$,
 and for some constant $p_{\rm out}$, 
 $0 \le p_i \le  p_{\rm out} < 1$ for all $i$.

 (ii) In the joint asymptotic regime of large $n$ and small $\epsilon$ being considered,
for large enough $n$, there is a good event $E_{(z)}$ over the randomness of all  $x_i^c$, $b_i$ and $z_i$
that happens w.p.$>1-\delta_z$, under which $r_{ij}$ defined as in \eqref{eq:xi-xj-2-1} satisfies that 
\begin{equation}\label{eq:bound-rij-A3}
\sup_{ i \neq j, 1 \le i,j \le n} | r_{ij} | \le \varepsilon_z, \quad \text{ and }\varepsilon_z= o(\epsilon).
\end{equation}
\end{assumption}

Note that when $b_i = b_j =0$,  $r_{ij} = 0$. The bound \eqref{eq:bound-rij-A3} may be fulfilled by various noise models.
In particular, heteroskedasstic noise is allowed where the conditional distribution $b_i, z_i | x_i^c$ may depend on $x_i^c$.
The i.i.d. of the $(x_i^c, b_i, z_i)$ and condition (i) in (A3) may be further relaxed as long as \eqref{eq:Wc0-C3-claim} (in the proof of Lemma \ref{lemma:hatetac-upper}) 
and \eqref{eq:bound-rij-A3} can hold. We keep the current assumption for simplicity.

\begin{remark}[High dimensional noise]\label{rk:varepsilon-z}
We give an exemplar noise model that fulfills Assumption (A3) and allows heteroskedasticity:
Let $z_i = \sigma( x_i^c ) g_i$, where $g_i \sim \calN(0, (1/m) I_m)$ independently from $x_i^c, b_i$, and $\sigma: \calM \to \R$ is a uniformly bounded function. 
One can let  $b_i \sim {\rm Bern}(p_{\rm out})$ i.i.d. for $0 \le p_{\rm out} < 1$, or make $b_i$  heteroskedastic  as well. 
For each $i \neq j$, the noise-noise cross term in  \eqref{eq:xi-xj-2-1} if not vanishing equals $z_i^T z_j = \sigma( x_i^c )\sigma( x_j^c )  g_i^T g_j$, which, due to the uniform boundedness of $\sigma$, satisfies that  $|z_i^T z_j | \le \Theta(  \sqrt{ \gamma \log m/m  } )$ w.p.$>1- 2m^{-\gamma}$ for any positive constant $\gamma$. 
The data-noise cross term in \eqref{eq:xi-xj-2-1} consists of inner-products like $(x_i^c)^T z_k =  \sigma( x_k^c )  (x_i^c)^T g_k$, and is again bounded by $ \Theta(  \sqrt{ \gamma \log m/m  } )$ with the same high probability even when $k=i$, because $g_k $ is independent from $x_i^c$'s, the manifold is compact and thus has bounded diameter and $\sigma$ is bounded.
This leads to $\varepsilon_z \sim \sqrt{ {\gamma \log m }/{m} }$ and $\delta_z \sim n^2 m^{-\gamma} $ by taking a union bound over $i \neq j$.
Then $\varepsilon_z = o(\epsilon)$ and the smallness of $\delta_z$ can be achieved given the proper choice of $\gamma$ and sufficiently fast increase of $m$ as $n$ increases and $\epsilon$ decreases.
The robustness of bi-stochastically normalized graph Laplacian for this type of outlier noise is supported by experiments in Section \ref{sec:exp}.
The argument also generalizes to non-normal vectors. e.g., when $g_i$ has independent entries of finite moments or is uniformly distributed on high-dimensional hypersphere. Generally, the bound \eqref{eq:bound-rij-A3} can hold whenever a concentration-type argument applies to bound $|r_{ij}|$ uniformly small as the ambient dimensionality $m$ increases, and the $O(m^{-1/2})$ decay of error was also previously shown in \cite{landa2021doubly}.
\end{remark}

Let the kernelized affinity matrix $W$ be defined as  in \eqref{eq:def-W}, and we denote the matrix by setting the diagonal entries of $W$ to be zero as $W^{0}$,  that is
\begin{equation}\label{eq:def-W0}
W = W^{0} + \beta_n I, \quad 
\beta_n 
 = \frac{\epsilon^{-d/2}}{n } \frac{1}{(4\pi)^{d/2}}.
\end{equation}
For $i \neq j$, note that 
\begin{align}
g \left( \frac{\|x_i - x_j\|^2}{\epsilon} \right) 
& =  \frac{1}{(4\pi)^{d/2}}\exp \left\{ - \frac{ \| x_i - x_j\|^2 }{4 \epsilon} \right\}  \nonumber \\
& =  g \left( \frac{\|x_i^c - x_j^c\|^2}{\epsilon} \right)  
  e^{- \frac{\| \xi_i \|^2}{4\epsilon}}  e^{- \frac{ \| \xi_j \|^2}{4 \epsilon} }  e^{ -\frac{r_{ij} }{4 \epsilon} }. \label{eq:gij-expand-1}
\end{align}
Next, we define $W^c$ as the affinity matrix made from clean data vectors $x^c_i$, that is,
\begin{equation}
W^c_{ij } = \frac{\epsilon^{-d/2}}{n} g \left(  \frac{ \| x_i^c - x_j^c\|^2}{\epsilon} \right),
\end{equation}
and define $W^{c,0}$ by setting-zero the diagonal entries of $W^c$, i.e.,
\begin{equation}\label{eq:def-Wc0}
W^c = W^{c,0} +\beta_n I.
\end{equation}
By \eqref{eq:gij-expand-1}, we have for $i \neq j$,
\begin{equation}\label{eq:Wij0-relation}
W_{ij}^{0} 
= \left(  e^{- \frac{\| \xi_i \|^2}{4\epsilon}}   W_{ij}^c e^{- \frac{ \| \xi_j \|^2}{4 \epsilon} }  \right) e^{ -\frac{r_{ij} }{4 \epsilon} }.
\end{equation}
Define  $\rho \in \R_+^n$ as
\begin{equation}\label{eq:def-rho}
\rho_i = e^{-\frac{\|\xi_i\|^2}{4\epsilon}}, \quad i=1, \cdots, n,
\end{equation}
and define  $H$ to be a real symmetric matrix  such that 
\begin{equation}\label{eq:def-Hij}
e^{ -\frac{r_{ij} }{4 \epsilon} } = 1 + H_{ij },  \quad H_{ii } = 0.
\end{equation}
At last, define 
\begin{equation}\label{eq:def-W'}
W':= D_\rho W^{c,0} D_\rho,
\end{equation}
which is a real symmetric matrix with zero diagonal entries,
and then  by \eqref{eq:Wij0-relation} we also have 
\begin{equation}\label{eq:W0-and-W'}
W^0 = W' + W' \odot H.
\end{equation}
The following lemma shows the approximation to $W^0$ by $W'$  under (A3).

\begin{lemma}[Comparing $W^0$  to $W'$]
\label{lemma:uniform-H}
Under Assumption (A3), for large enough $n$ such that $\varepsilon_z/ \epsilon < 0.1$ and under the good event $E_{(z)}$ in (A3),
\begin{equation}\label{eq:W0-W'-relation}
 \sup_{ 1 \le i,j \le n}|H_{ij}| \le \frac{\varepsilon_z}{\epsilon}.
\end{equation}
\end{lemma}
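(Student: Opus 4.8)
The plan is to treat this as a direct, elementary estimate on the scalar function $t \mapsto e^t - 1$, since by \eqref{eq:def-Hij} we have $H_{ij} = e^{-r_{ij}/(4\epsilon)} - 1$ for $i \neq j$ and $H_{ii} = 0$. The diagonal entries trivially satisfy $|H_{ii}| = 0 \le \varepsilon_z/\epsilon$, so it suffices to bound $|H_{ij}|$ for $i \neq j$.

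First I would invoke the good event $E_{(z)}$ from Assumption (A3), under which $\sup_{i \neq j} |r_{ij}| \le \varepsilon_z$. Combined with the hypothesis $\varepsilon_z/\epsilon < 0.1$, this gives $|r_{ij}/(4\epsilon)| \le \varepsilon_z/(4\epsilon) < 1/40$ for every off-diagonal pair; in particular the argument of the exponential lies inside a fixed small neighborhood of $0$, uniformly in $i,j$ and in $n$ (for $n$ large enough that $\varepsilon_z/\epsilon < 0.1$, which is guaranteed since $\varepsilon_z = o(\epsilon)$ in (A3)). Next I would apply the elementary inequality $|e^t - 1| \le 2|t|$, valid for all $|t| \le 1/2$ (it follows from $|e^t - 1| \le |t| \sum_{k \ge 0} |t|^k/(k+1)! \le |t|/(1-|t|)$). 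Taking $t = -r_{ij}/(4\epsilon)$ yields
\[
|H_{ij}| = \left| e^{-r_{ij}/(4\epsilon)} - 1 \right| \le 2 \cdot \frac{|r_{ij}|}{4\epsilon} \le \frac{\varepsilon_z}{2\epsilon} \le \frac{\varepsilon_z}{\epsilon},
\]
uniformly over $i \neq j$, which together with the diagonal case establishes \eqref{eq:W0-W'-relation}.

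I do not expect a genuine obstacle here: essentially all of the content of the lemma is already packaged in Assumption (A3), which supplies the uniform-over-pairs control $\sup_{i \neq j}|r_{ij}| \le \varepsilon_z$ on the high-probability event $E_{(z)}$; the remaining step is a one-line Taylor/geometric-series estimate. The only thing to double-check is that the smallness threshold used in the scalar inequality is compatible with $\varepsilon_z/\epsilon < 0.1$ (so that the constant factor — the $2$ above, or equivalently a factor $e^{O(\varepsilon_z/\epsilon)}$ — is comfortably absorbed into the slack between $\varepsilon_z/(4\epsilon)$ and the target $\varepsilon_z/\epsilon$), which it is with room to spare.
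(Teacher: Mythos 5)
Your proof is correct and follows essentially the same route as the paper's: both reduce the claim to the observation that on $E_{(z)}$ one has $|r_{ij}|/\epsilon \le \varepsilon_z/\epsilon < 0.1$, and then invoke an elementary small-argument bound on $|e^t-1|$ (the paper uses $|e^{-x/4}-1|\le|x|$ for $|x|<0.1$; you use $|e^t-1|\le 2|t|$ for $|t|\le 1/2$, which after the factor $1/4$ from the exponent even gives the slightly stronger $\varepsilon_z/(2\epsilon)$). No gap.
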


\subsection{Modified Sinkhorn on data with outlier noise }
\label{subsec:modified-sinkhorn-noise}

We solve for the $(C_{\rm SK},\varepsilon_{\rm SK})$-scaling factor problem of the diagonal-zero-out matrix $W^0$ defined in \eqref{eq:def-W0}, where $W$ is built from noisy data, and throughout this section we set 
\begin{equation}\label{eq:def-epsSK-noise}
C_{\rm SK } = q_{\rm min},
\quad
\varepsilon_{\rm SK} = O \left( \epsilon^2, \sqrt{\frac{\log n }{n \epsilon^{d/2}}}, \frac{\varepsilon_z}{\epsilon} \right).
\end{equation}
Define the scaling factor $\bar{\eta} \in \R_+^n$ such that 
\begin{equation}\label{eq:def-bareta-noise}
\bar{\eta} \odot \rho = \bar{\eta}^c  : = \rho_X( q_\epsilon).  
\end{equation}
We first show in next lemma that  with high probability
$\bar{\eta}$ gives a valid solution to the $(C_{\rm SK} ,\varepsilon_{\rm SK})$-matrix scaling problem. 

\begin{lemma}[Population scaling factor $\bar{\eta}$ for noisy data]
\label{lemma:bareta-noise}
Under Assumptions (A1)(A2)(A3), 
suppose as $n \to \infty$,  $\epsilon \to 0+$, $\epsilon^{d/2} = \Omega(\log n/n)$,
then when $n$ is large 
under the intersection of a good event $E_{(1)}$ 
(over the randomness of $x_i^c$'s only)
which happens w.p. $>1-2 n^{-9}$  and $E_{(z)}$ in (A3),
\begin{equation}\label{eq:bareta-noise-existence}
D_{\bar{\eta}} W^0 D_{\bar{\eta}} {\bf 1} = 1 + e(  \bar{\eta} ), 
\quad  
\| e(  \bar{\eta} ) \|_\infty = O \left( \epsilon^2, \sqrt{\frac{\log n }{n \epsilon^{d/2}}}, \frac{\varepsilon_z}{\epsilon}  \right).
\end{equation}
\end{lemma}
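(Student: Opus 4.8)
The plan is to reduce the row-sum estimate for the noisy zero-diagonal affinity $W^0$ to the clean-data estimate already established in Lemma \ref{lemma:construct-bar-eta}, exploiting the exact cancellation built into the definitions of $\rho$, $W'$ and $\bar\eta$, and then to absorb the noise-induced perturbation $W'\odot H$ as a small multiplicative correction controlled by Lemma \ref{lemma:uniform-H}. Write $\bar\eta^c=\rho_X(q_\epsilon)$, i.e.\ $\bar\eta^c_i=q_\epsilon(x_i^c)$. By \eqref{eq:def-bareta-noise} one has $\bar\eta\odot\rho=\bar\eta^c$, hence $D_{\bar\eta}D_\rho=D_\rho D_{\bar\eta}=D_{\bar\eta^c}$, and combined with $W'=D_\rho W^{c,0}D_\rho$ from \eqref{eq:def-W'} this gives $D_{\bar\eta}W'D_{\bar\eta}=D_{\bar\eta^c}W^{c,0}D_{\bar\eta^c}$. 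Using the decomposition $W^0=W'+W'\odot H$ from \eqref{eq:W0-and-W'}, the quantity to be bounded splits as
\[
D_{\bar\eta}W^0 D_{\bar\eta}{\bf 1}
= D_{\bar\eta^c}W^{c,0}D_{\bar\eta^c}{\bf 1}
+ D_{\bar\eta}\bigl(W'\odot H\bigr)D_{\bar\eta}{\bf 1}.
\]

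For the first term, note that $W^{c,0}$ is precisely the zero-diagonal Gaussian kernel affinity built from the i.i.d.\ clean samples $x_i^c\sim p$ on $\calM$ (see \eqref{eq:def-Wc0}), and $\bar\eta^c$ is its population scaling factor. Hence Lemma \ref{lemma:construct-bar-eta}(ii) together with Remark \ref{rk:W-diagonal-zero} applies verbatim, with the role of the sample set $X$ played by $\{x_i^c\}$: in the assumed regime, for large $n$ there is a good event $E_{(1)}$ depending only on the randomness of the $x_i^c$, with probability exceeding $1-2n^{-9}$, on which $D_{\bar\eta^c}W^{c,0}D_{\bar\eta^c}{\bf 1}={\bf 1}+e(\bar\eta^c)$ and $\|e(\bar\eta^c)\|_\infty = O\!\left(\epsilon^2,\sqrt{\log n/(n\epsilon^{d/2})}\right)$. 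Using Lemma \ref{lemma:construct-bar-eta}(i) (for $\epsilon<\epsilon_0$) and $\rho_i=e^{-\|\xi_i\|^2/(4\epsilon)}>0$ from \eqref{eq:def-rho}, one also records that $\bar\eta=\bar\eta^c\oslash\rho$ is a well-defined element of $\R_+^n$.

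For the second term I would use non-negativity and the uniform control of $H$. Since $W'_{ij}=\rho_i W^{c,0}_{ij}\rho_j\ge 0$ and $\bar\eta>0$, for each $i$
\[
\bigl|(D_{\bar\eta}(W'\odot H)D_{\bar\eta}{\bf 1})_i\bigr|
=\Bigl|\,\bar\eta_i\sum_{j} W'_{ij}H_{ij}\bar\eta_j\,\Bigr|
\le \Bigl(\sup_{1\le k,l\le n}|H_{kl}|\Bigr)\,\bar\eta_i\sum_{j} W'_{ij}\bar\eta_j
= \Bigl(\sup_{1\le k,l\le n}|H_{kl}|\Bigr)\bigl(1+e(\bar\eta^c)_i\bigr),
\]
where the last equality is the row sum $(D_{\bar\eta^c}W^{c,0}D_{\bar\eta^c}{\bf 1})_i$ computed above. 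By Lemma \ref{lemma:uniform-H}, on the good event $E_{(z)}$ of Assumption (A3) one has $\sup_{k,l}|H_{kl}|\le\varepsilon_z/\epsilon$ (its hypothesis $\varepsilon_z/\epsilon<0.1$ holds for large $n$ because $\varepsilon_z=o(\epsilon)$), and since $\|e(\bar\eta^c)\|_\infty\le 0.1$ for large $n$ the bound is at most $1.1\,\varepsilon_z/\epsilon=O(\varepsilon_z/\epsilon)$. Adding the two contributions on $E_{(1)}\cap E_{(z)}$ yields \eqref{eq:bareta-noise-existence} with $\|e(\bar\eta)\|_\infty = O\!\left(\epsilon^2,\sqrt{\log n/(n\epsilon^{d/2})},\varepsilon_z/\epsilon\right)$.

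I do not anticipate a genuine obstacle here: the definitions are arranged so that the $D_\rho$ factors cancel exactly and the noise enters only through $W'\odot H$, so the argument is a short computation resting on Lemma \ref{lemma:construct-bar-eta} for the clean data and the $\infty$-norm bound on $H$ from Lemma \ref{lemma:uniform-H}. The only point worth flagging is that $\bar\eta$ itself need not be uniformly bounded --- $\rho_i$ can be exponentially small when $x_i$ is an outlier --- so this lemma only asserts approximate bi-stochasticity; the uniform lower/upper boundedness of the scaling factors needed later in the graph-Laplacian analysis is established separately (cf.\ Lemma \ref{lemma:hatetac-upper} and Lemma \ref{lemma:hatetac-baretac}).
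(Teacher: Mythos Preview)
Your proposal is correct and follows essentially the same approach as the paper: both reduce the row sum of $D_{\bar\eta}W^0D_{\bar\eta}$ to the clean-data row sum $D_{\bar\eta^c}W^{c,0}D_{\bar\eta^c}{\bf 1}$ via the cancellation $\bar\eta\odot\rho=\bar\eta^c$, and then bound the $W'\odot H$ perturbation using the uniform bound on $H$ from Lemma~\ref{lemma:uniform-H} together with the already-controlled row sum. The only cosmetic difference is that you invoke Remark~\ref{rk:W-diagonal-zero} directly for $W^{c,0}$, whereas the paper applies Lemma~\ref{lemma:construct-bar-eta} to $W^c$ and explicitly subtracts the $\Theta(1/(n\epsilon^{d/2}))$ diagonal contribution; this is a minor streamlining, not a different argument.
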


\subsection{Robustness of graph Laplacian to outlier noise}

Suppose $\hat{\eta} \in \R_+^n$ is a solution of the $(C_{\rm SK} ,\varepsilon_{\rm SK})$-matrix scaling problem of $W^0$ found by Algorithm \ref{algo:SK-early},
and the bi-stochastically normalized graph Laplacian is defined as
\[
\hat{L}:= D(\hat{W} ) - \hat{W}, \quad \hat{W} : = D_{\hat{\eta}} W^0 D_{\hat{\eta}}.
\]
The analysis proceeds in three steps,

\begin{itemize}

\item 
Step 1. Analyzing the graph Laplacian associated with $\bar{W}' : = D_{\bar{\eta}} W' D_{\bar{\eta}}$. This is the same analysis as in Proposition \ref{prop:barLn-pointwise} except that we use the zero-diagonal matrix $W^{c,0}$ to replace $W^c$. 

\item 
Step 2. Comparing the  graph Laplacian associated with $\bar{W}'$ to that associated with  $\hat{W}' : = D_{\hat{\eta}} W' D_{\hat{\eta}}$. This is similar to the proof of Theorem \ref{thm:hatLn-2norm}, where one needs to replace the population scaling factor $\bar{\eta}$ with the empirical one $\hat{\eta}$ and control the error.

\item
Step 3. Comparing the  graph Laplacian associated with $\hat{W}'$ to that associated with  $\hat{W}  = D_{\hat{\eta}} W^0 D_{\hat{\eta}}$. This step makes use of the bound of the residual $(W^0-W')$ proved in Lemma \ref{lemma:uniform-H}.
\end{itemize}

Recall that $D(W)$ stands for the degree matrix of $W$, and we define the notations
\begin{align}\label{eq:def-three-Ls-noise}
\bar{L}':= D(\bar{W}' ) - \bar{W}',
\quad
\hat{L}':= D(\hat{W}' ) - \hat{W}',
\quad
\hat{L}:= D(\hat{W} ) - \hat{W}.
\end{align}

\subsubsection{Step 1. Convergence of $\bar{L}'$}

By definition of $W'$ in \eqref{eq:def-W'} and $\bar{\eta}^c$ in \eqref{eq:def-bareta-noise}, 
\begin{equation}\label{eq:barW'-2}
\bar{W}'  = D_{\bar{\eta}} W' D_{\bar{\eta}} 
= D_{\bar{\eta}} D_\rho W^{c,0} D_\rho D_{\bar{\eta}} 
= D_{\bar{\eta}^c}  W^{c,0}  D_{\bar{\eta}^c}.
\end{equation}
Comparing to \eqref{eq:def-barL}, 
$\bar{W}'$ equals  $\bar{W}$ therein by setting zero the diagonal entries,
which does not affect the definition of graph Laplacian, that is, 
\[
\bar{L}' 
= D(\bar{W}' ) - \bar{W}' 
= D(\bar{W}) - \bar{W} =  \bar{L}_n \quad \text{in \eqref{eq:def-barL}.}
\]
As a result, Proposition \ref{prop:barLn-pointwise} gives the same convergence result for $\bar{L}'$ as follows (no proof is needed):

\begin{proposition}\label{prop:barL'-convergence}
Under the same asymptotic condition of $n$ and $\epsilon$ as in Proposition \ref{prop:barLn-pointwise},
for $f \in C^4(\calM)$ and  large enough $n$,
under a good event  $E_{(0)}$ (over the randomness of $x_i^c$'s only) which happens w.p.$>1-2n^{-9}$,
\begin{equation}\label{eq:barLnprime-Errpt}
\| \left( - \frac{1}{\epsilon}\bar{L}' \rho_X f  \right) -  \rho_X ( \Delta_p f )   \|_\infty 
= O \left( \epsilon, \sqrt{ \frac{\log n }{n \epsilon^{d/2+1}}}\right),
\end{equation}
and the constant in big-O depends on $(\calM, p)$ and $f$. 
\end{proposition}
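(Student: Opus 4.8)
The plan is to recognize that no new work is actually required: the matrix $\bar{L}'$ is literally the graph Laplacian studied in Proposition \ref{prop:barLn-pointwise}, only built from a kernel matrix with its diagonal entries zeroed out. Concretely, by \eqref{eq:barW'-2} we have $\bar{W}' = D_{\bar{\eta}^c} W^{c,0} D_{\bar{\eta}^c}$, where $W^{c,0}$ is $W^c$ with zero diagonal, and $\bar{\eta}^c = \rho_X(q_\epsilon)$ is exactly the population scaling factor $\bar{\eta}$ appearing in \eqref{eq:def-barL} (now applied to the clean samples $x_i^c \sim p$ i.i.d.\ on $\calM$). Since the graph Laplacian operation $D(\cdot) - \cdot$ annihilates the diagonal of its argument — adding $\beta_n I$ to $W^c$ adds $\beta_n$ to both $D(\bar W)$ and $\bar W$, which cancels — we get $\bar{L}' = D(\bar{W}') - \bar{W}' = D(\bar{W}) - \bar{W} = \bar{L}_n$ with $\bar{L}_n$ as in \eqref{eq:def-barL}.

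Given this identity, I would simply invoke Proposition \ref{prop:barLn-pointwise} applied to the clean data $\{x_i^c\}_{i=1}^n$, which are i.i.d.\ from $p$ on $\calM$ and therefore satisfy the hypotheses (A1)(A2) required there. The asymptotic regime is the same ($\epsilon \to 0$, $\epsilon^{d/2+1} = \Omega(\log n / n)$), and the conclusion \eqref{eq:eqn-ptwise-barL}, restated in $\infty$-norm form as \eqref{eq:barLn-Errpt}, is precisely \eqref{eq:barLnprime-Errpt}. The good event is the event of Proposition \ref{prop:barLn-pointwise} instantiated on the $x_i^c$'s, which depends only on the randomness of the clean samples; I would name it $E_{(0)}$ and note it holds with probability $> 1 - 2n^{-9}$, with the big-O constant inherited from Proposition \ref{prop:barLn-pointwise}, hence depending only on $(\calM, p)$ and $f$.

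There is essentially no obstacle here — the statement is a bookkeeping corollary, and the excerpt itself already says ``no proof is needed.'' The one point worth stating carefully is the diagonal-cancellation observation (that replacing $W^c$ by $W^{c,0}$, i.e.\ subtracting $\beta_n I$, leaves $D(\bar W) - \bar W$ unchanged after conjugation by the fixed diagonal $D_{\bar\eta^c}$), together with the fact that the noise variables $b_i, z_i$ do not enter $\bar{L}'$ at all, so the relevant randomness is only over the $x_i^c$. Everything else is a direct citation of Proposition \ref{prop:barLn-pointwise}.
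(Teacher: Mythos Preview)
Your proposal is correct and matches the paper's own approach exactly: the paper observes that $\bar{W}'$ is $\bar{W}$ with zeroed diagonal, so $\bar{L}' = \bar{L}_n$, and then simply invokes Proposition \ref{prop:barLn-pointwise} on the clean data $x_i^c$, declaring that ``no proof is needed.'' Your additional remarks on the diagonal cancellation and the fact that the randomness involves only $x_i^c$ are accurate and make the bookkeeping explicit.
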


\subsubsection{Step 2. Convergence of $\hat{L}'$}

Analogously to \eqref{eq:def-bareta-noise}, for the empirical scaling factor $\hat{\eta}$ of $W^0$, we define
\begin{equation}\label{eq:def-hateta-noise}
\hat{\eta}^c  : = \hat{\eta} \odot \rho.
\end{equation}
Then similarly to \eqref{eq:barW'-2} we have that
\begin{equation}\label{eq:hatW'-2}
\hat{W}'  = D_{\hat{\eta}} W' D_{\hat{\eta}}
= D_{\hat{\eta}^c}  W^{c,0}  D_{\hat{\eta}^c}.
\end{equation}
To extend the analysis in Theorem \ref{thm:hatLn-2norm}, one would want to use the closeness between $\hat{\eta}^c$ and $\bar{\eta}^c$.
However, note that while $\bar{\eta}^c$ is uniformly bounded from below and above by $O(1)$ constants same as before, 
for $\hat{\eta}^c$ we no longer have the lower boundedness nor the upper boundedness directly:
while $\hat{\eta}_i \ge C_{\rm SK}$ uniformly but $\hat{\eta}^c_i = \hat{\eta}_i \rho_i$ and $\rho_i$ can be exponentially small when $b_i =1$. 
Thus the proof in Section \ref{subsec:convergence-hatL} based on the two-sided boundedness of the scaling factors does not directly apply and needs modification. 

To proceed, we first establish a uniform $O(1)$ upper-bound of $\hat{\eta}^c$ in the following lemma.

\begin{lemma}[Upper boundedness of $\hat{\eta}^c$]
\label{lemma:hatetac-upper}
Under (A1)(A2)(A3), suppose as $n\to \infty$,  
$\epsilon \to 0+$, $\epsilon^{d/2} = \Omega(\log n /n)$.
Then there is $O(1)$ constant  $C_2 >0$ s.t.
for large $n$ and under the intersection of $E_{(z)}$ and another good event $E_{(2)}$ 
(over the randomness of $(x_i^c, b_i)$'s only)
which happens w.p. $>1-2n^{-9}$,
\[
\max_i \hat{\eta}^c_i \le C_2 .
\]
\end{lemma}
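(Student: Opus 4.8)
The plan is to combine the approximate-bistochasticity of $\hat\eta$ on $W^0$ with the lower-boundedness constraint $\hat\eta_i \ge C_{\rm SK}$ and a uniform lower bound on an appropriate row sum, so that $\hat\eta_i^c = \hat\eta_i \rho_i$ can be controlled even though $\hat\eta_i$ itself may not be upper-bounded. The key identity to exploit is that for each fixed row $i$, the constraint $D_{\hat\eta} W^0 D_{\hat\eta}\mathbf1 = \mathbf1 + e(\hat\eta)$ with $\|e(\hat\eta)\|_\infty\le\varepsilon_{\rm SK}$ gives
\[
\hat\eta_i \sum_{j} W^0_{ij}\,\hat\eta_j \;\le\; 1+\varepsilon_{\rm SK}.
\]
Since every $\hat\eta_j\ge C_{\rm SK}>0$, this yields $\hat\eta_i \le (1+\varepsilon_{\rm SK})/(C_{\rm SK}\sum_j W^0_{ij})$, so it suffices to lower-bound $\sum_j W^0_{ij}$ uniformly in $i$ by an $O(1)$ constant. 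But $W^0 = W' + W'\odot H$ with $\|H\|_\infty \le \varepsilon_z/\epsilon = o(1)$ by Lemma \ref{lemma:uniform-H}, and $W' = D_\rho W^{c,0} D_\rho$, so
\[
\sum_j W^0_{ij} \;\ge\; (1-\varepsilon_z/\epsilon)\,\rho_i \sum_j W^{c,0}_{ij}\,\rho_j,
\]
and multiplying the bound on $\hat\eta_i$ through by $\rho_i$ gives
\[
\hat\eta^c_i = \hat\eta_i\rho_i \;\le\; \frac{1+\varepsilon_{\rm SK}}{C_{\rm SK}\,(1-\varepsilon_z/\epsilon)\;\sum_j W^{c,0}_{ij}\,\rho_j}.
\]
Thus the $\rho_i$ factor cancels and the entire problem reduces to showing that, uniformly in $i$, $\sum_{j\ne i} W^{c,0}_{ij}\,\rho_j \ge C_3$ for some $O(1)$ constant $C_3>0$, with high probability.

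Next I would establish this last lower bound. The summand $W^{c,0}_{ij}\rho_j = \frac{\epsilon^{-d/2}}{n} g(\|x_i^c - x_j^c\|^2/\epsilon)\, e^{-\|\xi_j\|^2/(4\epsilon)}$ involves the clean kernel weighted by $\rho_j = e^{-\|\xi_j\|^2/(4\epsilon)}\in\{1,\,\text{small}\}$. The crucial observation from Assumption (A3)(i) is that $b_j|x_j^c\sim\mathrm{Bern}(p_j)$ with $p_j\le p_{\rm out}<1$, so each sample is an in-lier ($\xi_j=0$, hence $\rho_j=1$) with probability at least $1-p_{\rm out}$, independently enough across $j$. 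Restricting the sum to in-lier indices $j$ with $b_j=0$, we get $\sum_{j\ne i} W^{c,0}_{ij}\rho_j \ge \sum_{j\ne i:\, b_j=0} W^{c,0}_{ij}$, which is the degree (with zero diagonal) of the clean affinity matrix restricted to a random subset of nodes of density $\gtrsim 1-p_{\rm out}$. I would then invoke a concentration argument of the Lemma \ref{lemma:degree-conc-W} type: conditioning on $x_i^c$, the quantity $\sum_{j\ne i:\,b_j=0} W^{c,0}_{ij}$ has conditional expectation $\gtrsim (1-p_{\rm out})\int_{\calM} \epsilon^{-d/2} g(\|x_i^c - y\|^2/\epsilon)\,p(y)\,dV(y) \gtrsim (1-p_{\rm out})\, p_{\rm min}\cdot m_0$ for the standard kernel moment $m_0 = \int_{\R^d} g(\|u\|^2)\,du = 1$, up to $O(\epsilon)$ correction, and a Bernstein bound gives concentration around this mean with deviation $O(\sqrt{\log n/(n\epsilon^{d/2})}) = o(1)$ under the connectivity regime $\epsilon^{d/2} = \Omega(\log n/n)$. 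A union bound over the $n$ rows gives the claim on a good event $E_{(2)}$ (over $(x_i^c,b_i)$'s) of probability $>1-2n^{-9}$ (this is the event in which \eqref{eq:Wc0-C3-claim} holds, referenced in the lemma statement). On $E_{(2)}\cap E_{(z)}$ we then set $C_2 := \frac{1+\varepsilon_{\rm SK}}{C_{\rm SK}(1-\varepsilon_z/\epsilon) C_3}$, which is $O(1)$ for large $n$ since $\varepsilon_{\rm SK}, \varepsilon_z/\epsilon = o(1)$.

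The main obstacle I anticipate is the concentration step in the presence of the Bernoulli thinning: one must handle the double randomness of $x_j^c$ and $b_j$ together, and in particular verify that conditioning on $b_j=0$ does not distort the distribution of $x_j^c$ in a way that breaks the lower bound on the conditional mean — this is fine under (A3) because $(x_j^c, b_j)$ are i.i.d. across $j$ and $p_j\le p_{\rm out}$ uniformly, so the in-lier nodes still carry a density bounded below by $(1-p_{\rm out})p_{\rm min}$, but the bookkeeping for the Bernstein variance proxy with the extra $\mathbf1\{b_j=0\}$ indicator must be done carefully (the per-term bound is still $\lesssim \epsilon^{-d/2}/n$ and the variance still $\lesssim \epsilon^{-d/2}/n^2$ in the relevant regime). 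A minor secondary point is that the summand restricted to $b_j=0$ drops the noisy neighbors entirely, which is exactly what we want — it only makes the kernel sum smaller but it remains bounded below — so no subtlety arises from the outliers other than ensuring $\|H\|_\infty<1$, which Lemma \ref{lemma:uniform-H} already supplies once $\varepsilon_z/\epsilon < 0.1$.
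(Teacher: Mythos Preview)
Your proposal is correct and follows essentially the same approach as the paper: both reduce to the key claim \eqref{eq:Wc0-C3-claim} that $\sum_{j\ne i} W^{c,0}_{ij}\rho_j \ge C_3$ uniformly in $i$, and both establish this by lower-bounding the sum by the in-lier contribution $\sum_{j\ne i,\,b_j=0} W^{c,0}_{ij}$ and applying a Bernstein-type degree concentration argument over the i.i.d.\ pairs $(x_j^c,b_j)$. The only cosmetic difference is the order of operations in the algebra---the paper first passes from $W^0$ to $W'$ (getting $\sum_{j}\hat\eta_i W'_{ij}\hat\eta_j \le 1.1$) and then applies the lower bound $\hat\eta_j\ge C_{\rm SK}$, whereas you apply the lower bound on $\hat\eta_j$ first and then pass from $W^0$ to $W'$; both routes cancel the $\rho_i$ factor in the same way and yield the same $C_2 = O(1)/(C_{\rm SK}\,C_3)$.
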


Recall that by the construction of $\hat{\eta}$ and the property of $\bar{\eta}$ in Lemma \ref{lemma:bareta-noise},
for large $n$ and under the needed good events,
\begin{equation}\label{eq:hateta-bareta-epsSK}
D_{\hat{\eta}} W^0 D_{\hat{\eta}}  {\bf 1} = {\bf 1} + e(\hat{\eta}),
\quad
D_{\bar{\eta}} W^0 D_{\bar{\eta}}  {\bf 1} = {\bf 1} + e(\bar{\eta}),
\quad 
\| e(\hat{\eta}) \|_\infty,  \| e(\bar{\eta}) \|_\infty \le \varepsilon_{\rm SK}.
\end{equation}
Define $u \in \R^n$ by 
\begin{equation}\label{eq:def-u-hateta-bareta}
\hat{\eta} = \bar{\eta} \odot (1+u), 
\quad   \text{where by $\hat{\eta}, \bar{\eta} \in \R_+^n$, } 1+u_i >0.
\end{equation}
The following lemma bounds $u$ in 2-norm as an extension of Lemma \ref{lemma:eta1-eta2},
and it gives the comparison of $\hat{\eta}$ to $\bar{\eta}$
and equivalently that of $\hat{\eta}^c$ to $\bar{\eta}^c$.

\begin{lemma}[Comparison of $\hat{\eta}$ to $\bar{\eta}$] \label{lemma:hatetac-baretac}
Under (A1)(A2)(A3), suppose as $n\to \infty$,  
$\epsilon \to 0+$, $\epsilon^{d/2} = \Omega(\log n /n)$.
Then for large $n$ s.t. $\varepsilon_{\rm SK} < 0.1$,
 and under the intersection of good events 
$E_{(z)}$ in (A3), 
$E_{(1)}$ in Lemma \ref{lemma:bareta-noise}, 
and 
$E_{(2)}$ in Lemma \ref{lemma:hatetac-upper}, 
let constants $q_{\rm min}$ as in Lemma \ref{lemma:construct-bar-eta}(i) and $C_2$ as in Lemma \ref{lemma:hatetac-upper},
\begin{equation}
\| u \|_2 \le \frac{C_2}{0.4 q_{\rm min}} \sqrt{n}   \varepsilon_{\rm SK}.
\end{equation}
\end{lemma}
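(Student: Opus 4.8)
The plan is to mimic the argument behind Lemma \ref{lemma:eta1-eta2} (comparison of approximate scaling factors for a PSD matrix), but applied to the \emph{clean} zero-diagonal affinity matrix $W^{c,0}$ rather than to $W^0$. The point is that, by \eqref{eq:barW'-2} and \eqref{eq:hatW'-2}, both $\bar\eta$ and $\hat\eta$ correspond, after multiplication by $\rho$, to scaling factors of $W^{c,0}$: namely $\bar\eta^c = \bar\eta\odot\rho$ and $\hat\eta^c = \hat\eta\odot\rho$. First I would record that $W^{c,0}$ is symmetric with non-negative entries, and that the vector $u$ defined in \eqref{eq:def-u-hateta-bareta} satisfies $\hat\eta^c = \bar\eta^c\odot(1+u)$ as well (the factor $\rho$ cancels in the ratio), so it suffices to compare $\hat\eta^c$ and $\bar\eta^c$ as scaling factors of $W^{c,0}$.

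Next I would check the hypotheses needed to run the PSD-type comparison. The matrix $W^{c,0}$ is not PSD (its diagonal has been zeroed out), so — exactly as in Remark \ref{rk:hateta-W0} and Lemma \ref{lemma:eta1-eta2-nonPSD} — I would instead use $W^{c,0} + \beta_n I = W^c$, which is PSD because it is a Gaussian kernel Gram matrix (up to the $1/n$ factor) built from the clean data $x_i^c$. Here $\beta_n = (4\pi\epsilon)^{-d/2}/n \to 0$ under $\epsilon^{d/2} = \Omega(\log n/n)$, so the shift is negligible and in particular satisfies the smallness condition $\beta_n < 0.4\, C_1/C_2^3$ of Remark \ref{rk:hateta-W0} for large $n$. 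The lower bound $C_1 = q_{\rm min}$ for $\bar\eta^c$ comes from Lemma \ref{lemma:construct-bar-eta}(i); the upper bound $C_2$ for $\hat\eta^c$ is exactly Lemma \ref{lemma:hatetac-upper}; the upper bound for $\bar\eta^c$ is $q_{\rm max}$ from Lemma \ref{lemma:construct-bar-eta}(i). For the discrepancy vectors, I would translate \eqref{eq:hateta-bareta-epsSK}: since $D_{\hat\eta}W^0D_{\hat\eta}\mathbf 1 = \mathbf 1 + e(\hat\eta)$ with $\|e(\hat\eta)\|_\infty \le \varepsilon_{\rm SK}$ — but here I must be careful, because it is $W^0$, not $W'$, that is being scaled by $\hat\eta$. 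I would therefore pass through $W'$ using $W^0 = W' + W'\odot H$ from \eqref{eq:W0-and-W'} and the bound $\|H\|_{\max}\le\varepsilon_z/\epsilon$ from Lemma \ref{lemma:uniform-H}, to see that $D_{\hat\eta}W'D_{\hat\eta}\mathbf 1 = \mathbf 1 + \tilde e(\hat\eta)$ with $\|\tilde e(\hat\eta)\|_\infty \le \varepsilon_{\rm SK} + O(\varepsilon_z/\epsilon)\cdot O(1)$, which is still $O(\varepsilon_{\rm SK})$ since $\varepsilon_z/\epsilon$ is one of the terms in $\varepsilon_{\rm SK}$ in \eqref{eq:def-epsSK-noise}; and then $D_{\hat\eta^c}W^{c,0}D_{\hat\eta^c}\mathbf 1 = D_{\hat\eta}W'D_{\hat\eta}\mathbf 1$, identically. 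The same holds for $\bar\eta$ via Lemma \ref{lemma:bareta-noise}. Hence both $\hat\eta^c$ and $\bar\eta^c$ are $O(\varepsilon_{\rm SK})$-approximate scaling factors of $W^{c,0}$, with $\ell_2$ discrepancies bounded by $\sqrt n\,O(\varepsilon_{\rm SK})$.

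Then I would apply the non-PSD version of Lemma \ref{lemma:eta1-eta2} (Lemma \ref{lemma:eta1-eta2-nonPSD}) with $A = W^{c,0}$, $\delta = \beta_n$, $\eta_1 = \bar\eta^c$, $\eta_2 = \hat\eta^c$, $C_1 = q_{\rm min}$, $C_2$ from Lemma \ref{lemma:hatetac-upper}, to obtain
\[
\|u\|_2 \;\le\; \frac{1}{0.4}\,\frac{C_2}{q_{\rm min}}\bigl(\|\tilde e(\bar\eta^c)\|_2 + \|\tilde e(\hat\eta^c)\|_2\bigr)
\;\le\; \frac{C_2}{0.4\,q_{\rm min}}\,\sqrt n\,\varepsilon_{\rm SK},
\]
after absorbing constants into the definition of $\varepsilon_{\rm SK}$; the constant $0.4$ matching the statement comes from the $0.4$ slack in the non-PSD lemma. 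Taking the intersection of $E_{(z)}$, $E_{(1)}$, $E_{(2)}$ supplies all the probabilistic ingredients simultaneously.

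The main obstacle I anticipate is \textbf{not} the PSD comparison itself — that is essentially Lemma \ref{lemma:eta1-eta2-nonPSD} applied verbatim — but rather the bookkeeping of getting from the hypothesis ``$\hat\eta$ scales $W^0$ approximately'' to the needed ``$\hat\eta^c$ scales $W^{c,0}$ approximately'': one has to propagate the perturbation $W^0 - W' = W'\odot H$ through the row sums, which requires knowing that $\|D_{\hat\eta}W'D_{\hat\eta}\mathbf 1\|_\infty$ (equivalently the row sums of $\hat W'$) are $O(1)$ — this needs a preliminary uniform control of $\hat W'$ from above, which in turn leans on Lemma \ref{lemma:hatetac-upper} together with a degree-type concentration bound for $W^{c,0}$ (as in Lemma \ref{lemma:degree-conc-W}). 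Making sure every $O(\cdot)$ constant here is genuinely $O(1)$ and independent of $n$, and that all these bounds hold on the \emph{same} good event, is the delicate part.
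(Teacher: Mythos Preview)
Your approach is correct (up to the exact value of the constant) but organized differently from the paper's. The paper does \emph{not} translate to $W^{c,0}$ and then invoke Lemma~\ref{lemma:eta1-eta2-nonPSD} as a black box. Instead it works directly with $A:=D_{\bar\eta}W^0D_{\bar\eta}$ and repeats the quadratic-form argument by hand: from $u^T(e(\hat\eta)-e(\bar\eta)) = u^T A u + u^T D_u A(\mathbf 1+u)$, the second term is bounded below by $0.9\,(q_{\min}/C_2)\|u\|_2^2$ using $(1+u_i)^{-1}=\bar\eta_i^c/\hat\eta_i^c\ge q_{\min}/C_2$ and the row sums of $D_{\hat\eta}W^0D_{\hat\eta}$; the first term is handled by splitting $A=\bar W' + \bar W'\odot H$, using $\bar W'=D_{\bar\eta^c}(W^c-\beta_n I)D_{\bar\eta^c}$ with $W^c\succeq 0$ for the PSD part and the row-sum bound $\|\bar W'\|_2\le 1.1$ for the $H$-part, yielding $u^TAu\ge -o(1)\|u\|_2^2$. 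Combining gives $0.8\,(q_{\min}/C_2)\|u\|_2\le\|e(\hat\eta)\|_2+\|e(\bar\eta)\|_2\le 2\sqrt n\,\varepsilon_{\rm SK}$, hence the $0.4$. So the noise perturbation $H$ enters the quadratic form rather than the discrepancy vectors.

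Your route---pushing $H$ into the discrepancies $\tilde e$ and then applying Lemma~\ref{lemma:eta1-eta2-nonPSD} to $W^{c,0}$---is equally valid and arguably more modular, but two remarks: (i) the constant $0.4$ in the statement is $0.8/2$ from the paper's direct argument; the $0.4$ in Remark~\ref{rk:hateta-W0} and Lemma~\ref{lemma:eta1-eta2-nonPSD} is the condition on $\delta$, not the output constant (which is $0.5$), so your route gives the right order but not this exact numerical constant without absorbing factors into $\varepsilon_{\rm SK}$; (ii) the ``delicate'' obstacle you flag---bounding $\|D_{\hat\eta}W'D_{\hat\eta}\mathbf 1\|_\infty$---is simpler than you suggest: since $W^0_{ij}=W'_{ij}(1+H_{ij})$ with $|H_{ij}|\le\varepsilon_z/\epsilon<0.1$ under $E_{(z)}$, one has $\sum_j\hat\eta_iW'_{ij}\hat\eta_j\le(1+\varepsilon_{\rm SK})/(1-\varepsilon_z/\epsilon)$ deterministically, with no need for a separate degree concentration on $W^{c,0}$.
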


We are ready to prove the central result in Step 2.

\begin{proposition}\label{prop:hatL'-convergence}
Under Assumptions (A1)(A2)(A3), suppose as $n \to \infty$, $\epsilon \to 0+$ and $\epsilon^{d/2} = \Omega( \log n /n)$.
Then for any $f \in C^4(\calM)$, 
with large $n$  and under the intersection of good events 
$E_{(z)}$ in (A3), 
$E_{(1)}$ in Lemma \ref{lemma:bareta-noise}, 
and 
$E_{(2)}$ in Lemma \ref{lemma:hatetac-upper}, 
 \[
\|     \hat{L}' (\rho_X f)   - \bar{L}' (\rho_X f)  \|_2
 = O \left(  \sqrt{\epsilon   \log (1/\epsilon) } \right)  \sqrt{n}  \varepsilon_{\rm SK},
 \]
and the constant in big-O depends on $(\calM, p)$ and $f$.
\end{proposition}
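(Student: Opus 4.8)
The plan is to run the same $\hat L$-versus-$\bar L$ comparison as in the proof of Theorem~\ref{thm:hatLn-2norm}, but carried out for the clean-data zero-diagonal kernel matrix $W^{c,0}$ rather than for $W$. The essential point is that although the scaling factors $\hat\eta,\bar\eta$ of $W^0$ need not be comparable (the factor $\rho$ in \eqref{eq:def-rho} is exponentially small at outlier indices), by \eqref{eq:barW'-2} and \eqref{eq:hatW'-2} both $\bar W'$ and $\hat W'$ are conjugations of $W^{c,0}$ by the \emph{rescaled} factors $\bar\eta^c=\rho_X(q_\epsilon)$ and $\hat\eta^c=\hat\eta\odot\rho$, namely $\bar W'=D_{\bar\eta^c}W^{c,0}D_{\bar\eta^c}$ and $\hat W'=D_{\hat\eta^c}W^{c,0}D_{\hat\eta^c}$, and these rescaled factors \emph{are} controlled. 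So I would first assemble, on the intersection of $E_{(z)}$, $E_{(1)}$, $E_{(2)}$: (a) $q_{\rm min}\le\bar\eta^c_i\le q_{\rm max}$ (Lemma~\ref{lemma:construct-bar-eta}(i)); (b) $\hat\eta^c_i\le C_2$ (Lemma~\ref{lemma:hatetac-upper}); (c) writing $\hat\eta^c=\bar\eta^c\odot({\bf 1}+u)$ with the same $u$ as in \eqref{eq:def-u-hateta-bareta}, the bound $\|u\|_2\le\frac{C_2}{0.4\,q_{\rm min}}\sqrt n\,\varepsilon_{\rm SK}$ (Lemma~\ref{lemma:hatetac-baretac}) and, since $1+u_i=\hat\eta^c_i/\bar\eta^c_i\in(0,\,C_2/q_{\rm min}]$, also $\|u\|_\infty\le\max\{1,C_2/q_{\rm min}\}=O(1)$; (d) $\max_i\sum_j W^{c,0}_{ij}=O(1)$ with high probability by Lemma~\ref{lemma:degree-conc-W}, whence $\|W^{c,0}\|_2\le\max_i\sum_j W^{c,0}_{ij}=O(1)$ because $W^{c,0}$ is symmetric with non-negative entries.

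Next I would write down the algebraic identity. Taking $v:=\rho_X f$ to be the evaluation of $f$ at the clean points $x_i^c\in\calM$ (as in Proposition~\ref{prop:barL'-convergence}) and using $\big((D(W)-W)v\big)_i=\sum_j W_{ij}(v_i-v_j)$, the relation $\hat W'_{ij}=(1+u_i)(1+u_j)\bar W'_{ij}$ gives
\[
\big(\hat L'v-\bar L'v\big)_i=\sum_j\bar W'_{ij}\big(u_i+u_j+u_iu_j\big)(v_i-v_j)=u_i(\bar L'v)_i+(1+u_i)S_i,\qquad S_i:=\sum_j\bar W'_{ij}u_j(v_i-v_j),
\]
so $\hat L'v-\bar L'v=u\odot(\bar L'v)+S+u\odot S$. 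Since $\|a\odot b\|_2\le\|a\|_\infty\|b\|_2$ and $\|u\|_\infty=O(1)$, it then suffices to bound $\|\bar L'v\|_\infty$ and $\|S\|_2$.

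Both of those reduce to one kernel-prefactor estimate. As $\calM$ is compact and isometrically embedded, $f\in C^4(\calM)$ is Lipschitz in the ambient metric, so $|v_i-v_j|\le L_f\|x_i^c-x_j^c\|$. I would split each sum over $j$ at radius $R_\epsilon:=\sqrt{C_0\epsilon\log(1/\epsilon)}$: on $\{\|x_i^c-x_j^c\|>R_\epsilon\}$ one has $W^{c,0}_{ij}\le n^{-1}\epsilon^{-d/2}(4\pi)^{-d/2}\epsilon^{C_0/4}$, which for $C_0$ chosen large (relative to the powers of $\epsilon$ and $n^{-1}$ entering the target) makes the entire far part super-polynomially negligible; on $\{\|x_i^c-x_j^c\|\le R_\epsilon\}$ one has $W^{c,0}_{ij}|v_i-v_j|\le L_f R_\epsilon W^{c,0}_{ij}$. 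Hence for any $w\in\R^n$, $\big|\sum_j\bar W'_{ij}w_j(v_i-v_j)\big|\le q_{\rm max}^2 L_f R_\epsilon\,(W^{c,0}|w|)_i+(\text{negligible})$. Taking $w={\bf 1}$ gives $\|\bar L'v\|_\infty=O(R_\epsilon)$; taking $w=u$ gives $\|S\|_2\le q_{\rm max}^2 L_f R_\epsilon\|W^{c,0}\|_2\|u\|_2+(\text{negligible})=O(R_\epsilon)\|u\|_2$. Combining with (c)--(d) and $R_\epsilon=O(\sqrt{\epsilon\log(1/\epsilon)})$,
\[
\|\hat L'v-\bar L'v\|_2\le\|\bar L'v\|_\infty\|u\|_2+(1+\|u\|_\infty)\|S\|_2=O\!\big(\sqrt{\epsilon\log(1/\epsilon)}\big)\|u\|_2=O\!\big(\sqrt{\epsilon\log(1/\epsilon)}\big)\sqrt n\,\varepsilon_{\rm SK},
\]
which is the asserted bound, with constant depending on $(\calM,p)$ through $q_{\rm min},q_{\rm max},C_2$ and manifold constants and on $f$ through $L_f$.

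\textbf{Main obstacle.} The genuinely non-routine step is (c): because no uniform \emph{lower} bound on $\hat\eta^c$ is available (equivalently, no uniform upper bound on $\hat\eta$ — outlier indices can have exponentially large $\hat\eta_i$), one cannot reuse the two-sided-boundedness argument of Section~\ref{subsec:convergence-hatL} verbatim. Instead $\|u\|_\infty=O(1)$ must be obtained from the lower bound on $\bar\eta^c$ together with the one-sided upper bound $\hat\eta^c_i\le C_2$ supplied by Lemma~\ref{lemma:hatetac-upper}, and $\|u\|_2=O(\sqrt n\,\varepsilon_{\rm SK})$ from the non-PSD extension of Lemma~\ref{lemma:eta1-eta2} packaged in Lemma~\ref{lemma:hatetac-baretac}; this is precisely why those two lemmas are proved beforehand. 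Everything else — the $\sqrt{\log(1/\epsilon)}$ produced by truncating the Gaussian prefactor at $R_\epsilon$, the operator-norm bound via row sums, and the algebraic expansion — is as in the proof of Theorem~\ref{thm:hatLn-2norm}.
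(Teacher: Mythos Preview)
Your proposal is correct and follows essentially the same approach as the paper: the algebraic decomposition $u\odot(\bar L'v)+({\bf 1}+u)\odot S$ is exactly the paper's $\textcircled{1}+\textcircled{2}$, and the three inputs you assemble (Lemma~\ref{lemma:hatetac-baretac} for $\|u\|_2$, Lemma~\ref{lemma:hatetac-upper} for the $O(1)$ bound on $1+u_i$, and the Gaussian truncation at radius $\sqrt{\epsilon\log(1/\epsilon)}$) are precisely what the paper invokes. One small cleanup: for item (d) you should derive $\max_i\sum_j W^{c,0}_{ij}=O(1)$ directly from $E_{(1)}$ via \eqref{eq:bound-sum-pf-etac-1} together with $\bar\eta_j^c\ge q_{\rm min}$, rather than from Lemma~\ref{lemma:degree-conc-W}, so as not to introduce a good event beyond the three listed in the proposition.
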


\subsubsection{Step 3. Convergence of $\hat{L}$}

\begin{proposition}\label{prop:hatL-convergence}
Under the same condition as Proposition \ref{prop:hatL'-convergence},
for any $f \in C^4(\calM)$, for large $n$ and under the intersection of $E_{(z)}$, $E_{(1)}$ and $E_{(2)}$ as therein,
 \[
\|     \hat{L} (\rho_X f)   - \hat{L}' (\rho_X f)  \|_\infty
 =O \left(  \sqrt{ \frac{  \log (1/\epsilon) } {\epsilon } } \right)   {\varepsilon_z},
 \]
and the constant in big-O depends on $(\calM, p)$ and $f$.
\end{proposition}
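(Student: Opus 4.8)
The plan is to exploit the decomposition $W^0 = W' + W'\odot H$ from \eqref{eq:W0-and-W'}, which after conjugating by $D_{\hat\eta}$ yields $\hat W = \hat W' + \hat W'\odot H$, so that the difference of the associated graph Laplacians is the graph Laplacian of the perturbation $\hat W'\odot H$: writing $E := \hat W'\odot H$ (a symmetric matrix with zero diagonal since $H_{ii}=0$), one has $\hat L - \hat L' = D(E) - E$. Applied to $\rho_X f$, the $i$-th entry is $\sum_j E_{ij}(f(x_i) - f(x_j))$. I would bound this in $\infty$-norm by a product of three factors: the uniform bound $|H_{ij}|\le \varepsilon_z/\epsilon$ from Lemma \ref{lemma:uniform-H}; the smoothness bound $|f(x_i)-f(x_j)| \le \|\nabla f\|_\infty \|x_i - x_j\|$ (using that $f$ is $C^1$ and that on the Gaussian kernel's effective support $\|x_i-x_j\|$ is controlled); and a row-sum control $\sum_j \hat W'_{ij}\|x_i - x_j\|$.

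The key technical step is therefore to show that $\sum_j \hat W'_{ij} \le O(1)$ and more precisely $\sum_j \hat W'_{ij}\|x_i - x_j\| = O(\sqrt{\epsilon\log(1/\epsilon)})$ with high probability. The first bound follows because $\hat W'$ is, up to the $\varepsilon_{\rm SK}$-approximate bi-stochasticity already established in Step 2, close to row-stochastic: indeed $\hat W = \hat W' + \hat W'\odot H$ satisfies $\hat W {\bf 1} = {\bf 1} + e(\hat\eta)$ with $\|e(\hat\eta)\|_\infty \le \varepsilon_{\rm SK}$, and since $|H_{ij}|\le \varepsilon_z/\epsilon$ is small, $\hat W'{\bf 1}$ is within $O(\varepsilon_{\rm SK})$ of ${\bf 1}$ in $\infty$-norm, hence each row sum of $\hat W'$ is $O(1)$; in fact $\le 1 + \varepsilon_{\rm SK}$ up to a factor $(1-\varepsilon_z/\epsilon)^{-1}$. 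For the weighted row sum, I would insert the factor $\|x_i - x_j\|$ against the Gaussian decay: on the event that all $\hat\eta^c_i \le C_2$ (Lemma \ref{lemma:hatetac-upper}) one has $\hat W'_{ij} = \hat\eta^c_i \hat\eta^c_j W^{c,0}_{ij} \le C_2^2 W^{c,0}_{ij}$, and then $\sum_j W^{c,0}_{ij}\|x_i^c - x_j^c\|$ is a standard kernel-density-estimate quantity: its expectation is $O(\sqrt\epsilon)$ because $\int g(\|y\|^2/\epsilon)\|y\|\,dy = O(\epsilon^{(d+1)/2})$ after the $\epsilon^{-d/2}$ normalization, and a Bernstein concentration argument (as in the proof of Proposition \ref{prop:barLn-pointwise}) contributes the $\sqrt{\log(1/\epsilon)}$ factor together with the fluctuation term, which is of lower order in the stated regime. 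I also need $\|x_i - x_j\| \le \|x_i^c - x_j^c\| + \|\xi_i\| + \|\xi_j\|$, but on the effective kernel support $e^{-\|\xi_i\|^2/4\epsilon}$ is non-negligible only when $\|\xi_i\|^2 = O(\epsilon\log(1/\epsilon))$, so the noise contribution to the distance is also $O(\sqrt{\epsilon\log(1/\epsilon)})$ and gets absorbed.

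Combining, the $i$-th entry of $(\hat L - \hat L')(\rho_X f)$ is bounded by $\|\nabla f\|_\infty \cdot (\varepsilon_z/\epsilon)\cdot O(\sqrt{\epsilon\log(1/\epsilon)}) = O(\sqrt{\log(1/\epsilon)/\epsilon})\,\varepsilon_z$, uniformly in $i$, which is exactly the claimed $\infty$-norm bound. The main obstacle I anticipate is the bookkeeping around the weighted row sum $\sum_j \hat W'_{ij}\|x_i - x_j\|$: one must pass from the noisy distances $\|x_i - x_j\|$ back to the clean distances while simultaneously handling the possibly-exponentially-small factors $\rho_i$ (so that $\hat\eta^c$ rather than $\hat\eta$ is the bounded object), and one must verify that the relevant concentration estimates already proved for $W^{c,0}$ in the clean-data analysis transfer verbatim here since only $x_i^c$'s randomness enters that piece. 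Everything else is a direct assembly of Lemmas \ref{lemma:uniform-H}, \ref{lemma:hatetac-upper}, and the kernel estimates underlying Proposition \ref{prop:barLn-pointwise}.
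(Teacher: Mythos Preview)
Your overall strategy is correct and essentially the same as the paper's: write $\hat W-\hat W'=\hat W'\odot H$, pull out the uniform bound $|H_{ij}|\le\varepsilon_z/\epsilon$ from Lemma~\ref{lemma:uniform-H}, and show that the row-weighted sum $\sum_j \hat W'_{ij}\,|f(x_j)-f(x_i)|$ is $O(\sqrt{\epsilon\log(1/\epsilon)})$.

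Two small points of divergence from the paper are worth noting. First, the paper does not keep $\hat\eta$ in the sum but passes to the population factor by writing $\hat\eta=\bar\eta\odot(1+u)$ and using the entrywise bound $1+u_i\le C_2/q_{\min}$; the remaining sum is then exactly $\sum_j \bar\eta^c_i\,W^{c,0}_{ij}\,\bar\eta^c_j\,|f(x_j)-f(x_i)|$, which is bounded by the already-proved claim \eqref{eq:bound-sum-barWij-fj-fi} (transported to $W^{c,0}$ and $\bar\eta^c$) under $E_{(1)}$ alone, with no new concentration event. Your route via $\hat W'_{ij}\le C_2^2 W^{c,0}_{ij}$ works as well, but the $\sqrt{\log(1/\epsilon)}$ does \emph{not} come from a Bernstein fluctuation: it is the truncation radius $\delta\sim\sqrt{\epsilon\log(1/\epsilon)}$ in the Gaussian-tail cut-off, after which one only needs the $O(1)$ degree bound (available under $E_{(1)}$). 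If you instead invoked a fresh Bernstein argument you would introduce a good event not listed in the proposition.

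Second, your worry about passing from noisy to clean distances is unnecessary. The function $f$ lives on $\calM$, so $\rho_X f$ is evaluated at the clean points $x_i^c$, and $\hat W'_{ij}=\hat\eta^c_i\,W^{c,0}_{ij}\,\hat\eta^c_j$ already involves only clean distances through $W^{c,0}$. There is no place where $\|x_i-x_j\|$ (with noise) enters the bound, so the paragraph about $\|\xi_i\|$ contributing to the distance can be dropped entirely.
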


We are ready to prove the main theorem of this section, combining the three steps.

\begin{theorem}\label{thm:hatL-convergence-noise}
Under Assumptions (A1)(A2)(A3), suppose as $n \to \infty$, $\epsilon \to 0+$ and $\epsilon^{d/2+1} = \Omega( \log n /n)$.
Then for any $f \in C^4(\calM)$, 
when $n$ is large, w.p. $> 1- 6n^{-9} - \delta_z$,
 \begin{equation}\label{eq:bound-thm-hatL-noise}
\frac{1}{\sqrt{n}} \|  \left( - \frac{1}{\epsilon} \hat{L} (\rho_X f)  \right) - \rho_X ( \Delta_p f) \|_2
 =    O \left(\epsilon, 
 	\sqrt{\frac{\log n \log (1/\epsilon) }{n \epsilon^{d/2+1}}}, 
 	 \frac{\varepsilon_z}{\epsilon}\sqrt{\frac{  \log (1/\epsilon) }{ \epsilon}}  \right).
 \end{equation}
The constant in big-O depends on $(\calM, p)$ and $f$.
\end{theorem}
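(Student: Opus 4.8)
The plan is to combine the three-step decomposition already set up in the paper, using the triangle inequality to split the total error of $-\frac{1}{\epsilon}\hat{L}(\rho_X f) - \rho_X(\Delta_p f)$ into three pieces aligned with Propositions \ref{prop:barL'-convergence}, \ref{prop:hatL'-convergence}, and \ref{prop:hatL-convergence}. Concretely, I would write
\[
-\frac{1}{\epsilon}\hat{L}(\rho_X f) - \rho_X(\Delta_p f)
= \left(-\frac{1}{\epsilon}\bar{L}'(\rho_X f) - \rho_X(\Delta_p f)\right)
+ \left(-\frac{1}{\epsilon}\hat{L}'(\rho_X f) + \frac{1}{\epsilon}\bar{L}'(\rho_X f)\right)
+ \left(-\frac{1}{\epsilon}\hat{L}(\rho_X f) + \frac{1}{\epsilon}\hat{L}'(\rho_X f)\right),
\]
take $\frac{1}{\sqrt{n}}\|\cdot\|_2$ of both sides, and bound each of the three terms.

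For the first term I would pass from the $\infty$-norm bound of Proposition \ref{prop:barL'-convergence} to the normalized $2$-norm using $\frac{1}{\sqrt n}\|v\|_2 \le \|v\|_\infty$, which immediately yields the $O\bigl(\epsilon,\sqrt{\log n/(n\epsilon^{d/2+1})}\bigr)$ contribution. For the second term I would apply Proposition \ref{prop:hatL'-convergence}, which gives $\|\hat{L}'(\rho_X f) - \bar{L}'(\rho_X f)\|_2 = O(\sqrt{\epsilon\log(1/\epsilon)})\sqrt{n}\,\varepsilon_{\rm SK}$; dividing by $\epsilon\sqrt n$ turns this into $O(\varepsilon_{\rm SK}\sqrt{\log(1/\epsilon)/\epsilon})$, and then I would substitute the value of $\varepsilon_{\rm SK}$ from \eqref{eq:def-epsSK-noise}, namely $O(\epsilon^2,\sqrt{\log n/(n\epsilon^{d/2})},\varepsilon_z/\epsilon)$, and simplify each of the three resulting sub-terms: the $\epsilon^2$ piece becomes $O(\epsilon^{3/2}\sqrt{\log(1/\epsilon)})$ which is absorbed into $O(\epsilon)$ for small $\epsilon$, the $\sqrt{\log n/(n\epsilon^{d/2})}$ piece becomes $O(\sqrt{\log n\log(1/\epsilon)/(n\epsilon^{d/2+1})})$ which matches the variance term already present, and the $\varepsilon_z/\epsilon$ piece becomes exactly the third claimed term $\frac{\varepsilon_z}{\epsilon}\sqrt{\log(1/\epsilon)/\epsilon}$. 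For the third term I would use Proposition \ref{prop:hatL-convergence}, whose $\infty$-norm bound $O(\sqrt{\log(1/\epsilon)/\epsilon})\,\varepsilon_z$ converts (via $\frac1{\sqrt n}\|\cdot\|_2\le\|\cdot\|_\infty$ and division by $\epsilon$) into $O(\frac{\varepsilon_z}{\epsilon}\sqrt{\log(1/\epsilon)/\epsilon})$, again matching the third claimed term.

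Finally I would account for the probability: the good event is the intersection of $E_{(z)}$ from (A3) (probability $>1-\delta_z$), $E_{(1)}$ from Lemma \ref{lemma:bareta-noise} and $E_{(0)}$ from Proposition \ref{prop:barL'-convergence} (each $>1-2n^{-9}$, and one should note these are events over the clean-data randomness only, so a union bound applies), and $E_{(2)}$ from Lemma \ref{lemma:hatetac-upper} ($>1-2n^{-9}$); the union bound gives failure probability at most $6n^{-9}+\delta_z$, matching the statement. I would also double-check the asymptotic regime hypothesis: Theorem \ref{thm:hatL-convergence-noise} assumes $\epsilon^{d/2+1}=\Omega(\log n/n)$, which is stronger than the $\epsilon^{d/2}=\Omega(\log n/n)$ needed for Propositions \ref{prop:hatL'-convergence} and \ref{prop:hatL-convergence} and exactly what Proposition \ref{prop:barL'-convergence} requires, so all three inputs are available simultaneously.

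The main obstacle is not any single estimate — each of the three propositions is already stated — but rather the bookkeeping in the second term: one must carefully propagate the definition of $\varepsilon_{\rm SK}$ through the factor $\sqrt{\log(1/\epsilon)/\epsilon}$, verify that the $\epsilon^2$-induced contribution is genuinely lower order than the leading $O(\epsilon)$ bias (it is, since $\epsilon^{3/2}\sqrt{\log(1/\epsilon)} = o(\epsilon)$ fails — actually $\epsilon^{3/2}\sqrt{\log(1/\epsilon)}\to 0$ faster than $\epsilon$ only if... no: $\epsilon^{3/2}\sqrt{\log(1/\epsilon)}/\epsilon = \epsilon^{1/2}\sqrt{\log(1/\epsilon)}\to 0$, so indeed it is absorbed), and confirm that the variance-type contribution from $\varepsilon_{\rm SK}$ does not worsen the exponent relative to the first term. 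A secondary subtlety is ensuring the various good events are defined on compatible probability spaces so the union bound is legitimate; the paper has been careful to flag which events depend only on $x_i^c$'s versus on the full $(x_i^c,b_i,z_i)$, and I would cite that explicitly when taking the union.
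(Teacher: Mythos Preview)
Your proposal is correct and essentially identical to the paper's own proof: the paper likewise intersects the four good events $E_{(z)}$, $E_{(0)}$, $E_{(1)}$, $E_{(2)}$, invokes Propositions \ref{prop:barL'-convergence}, \ref{prop:hatL'-convergence}, \ref{prop:hatL-convergence} via the triangle inequality, substitutes the value of $\varepsilon_{\rm SK}$ from \eqref{eq:def-epsSK-noise} into the Step~2 bound, and uses $\|v\|_2 \le \sqrt{n}\|v\|_\infty$ to convert the $\infty$-norm bounds. Your bookkeeping on the three sub-terms of $\varepsilon_{\rm SK}\sqrt{\log(1/\epsilon)/\epsilon}$ (including the self-corrected observation that $\epsilon^{3/2}\sqrt{\log(1/\epsilon)}/\epsilon = \epsilon^{1/2}\sqrt{\log(1/\epsilon)} \to 0$) matches exactly what the paper does in one displayed line.
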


Comparing to the bound with clean manifold data in Theorem \ref{thm:hatLn-2norm}, we see that under the noise model in this paper the outlier noise brings another 
$O( {\varepsilon_z}{\epsilon}^{-3/2}\sqrt{ \log (1/\epsilon) }  )$ term in the error, which can be made small if $\varepsilon_z$ is sufficiently small.

\section{Experiments}\label{sec:exp}

In this section, we numerically compute the (approximately) bi-stochastically normalized graph Laplacian on simulated clean manifold data 
and with additive outlier noise.
Codes of the experiments are released at \url{https://github.com/xycheng/bistochastic_graph_laplacian}.

\subsection{Clean manifold data}\label{subsec:exp-clean}

\begin{figure}[t]
\centering
\includegraphics[height=.24\linewidth]{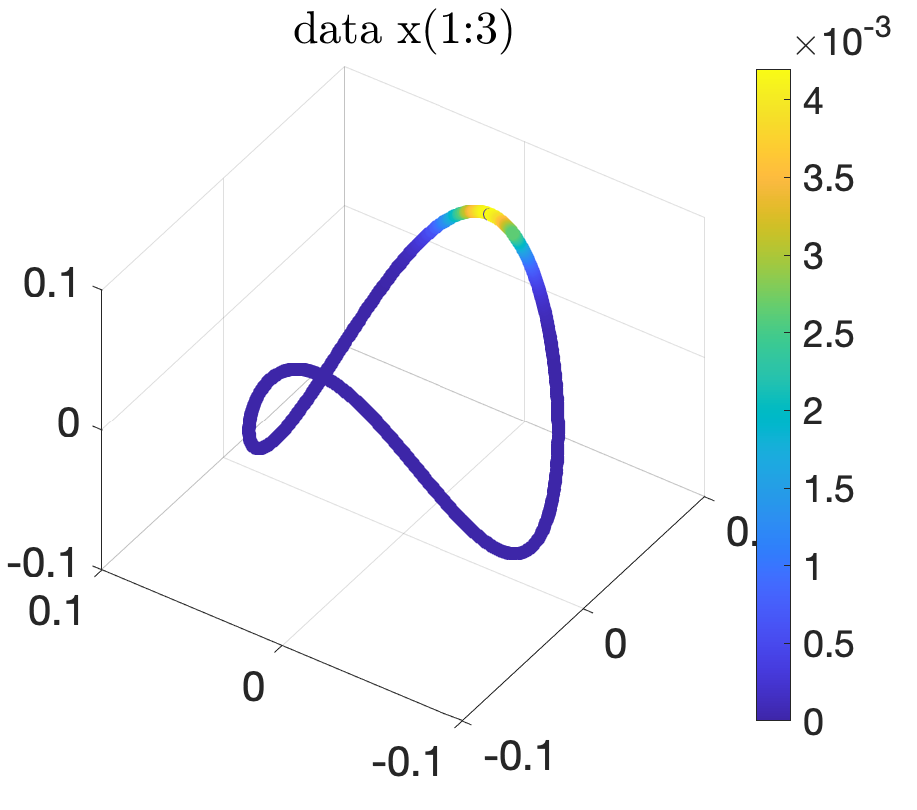}
\includegraphics[height=.24\linewidth]{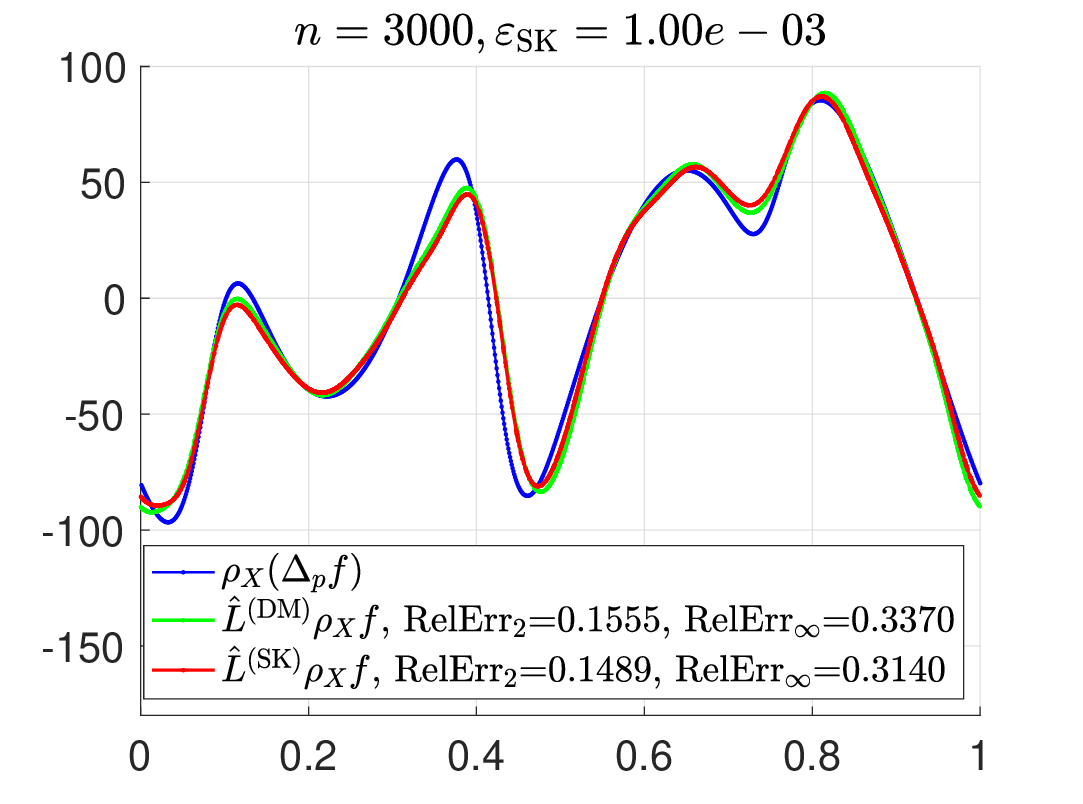}
\includegraphics[height=.24\linewidth]{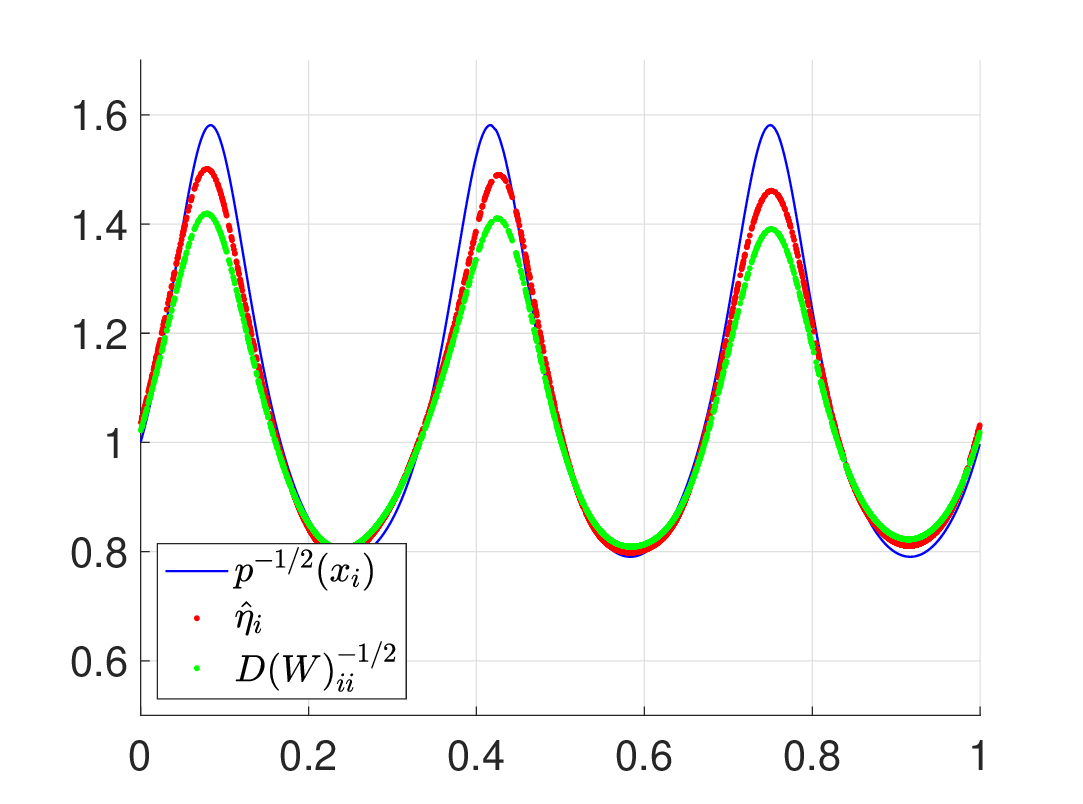}\\
\includegraphics[height=.24\linewidth]{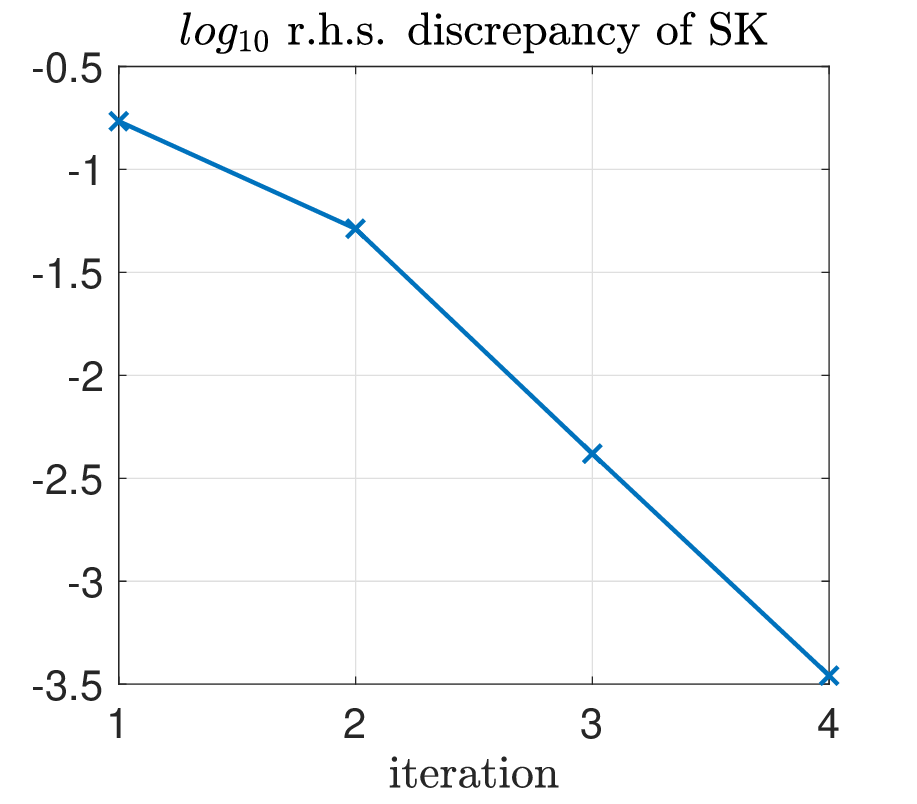}
\includegraphics[height=.24\linewidth]{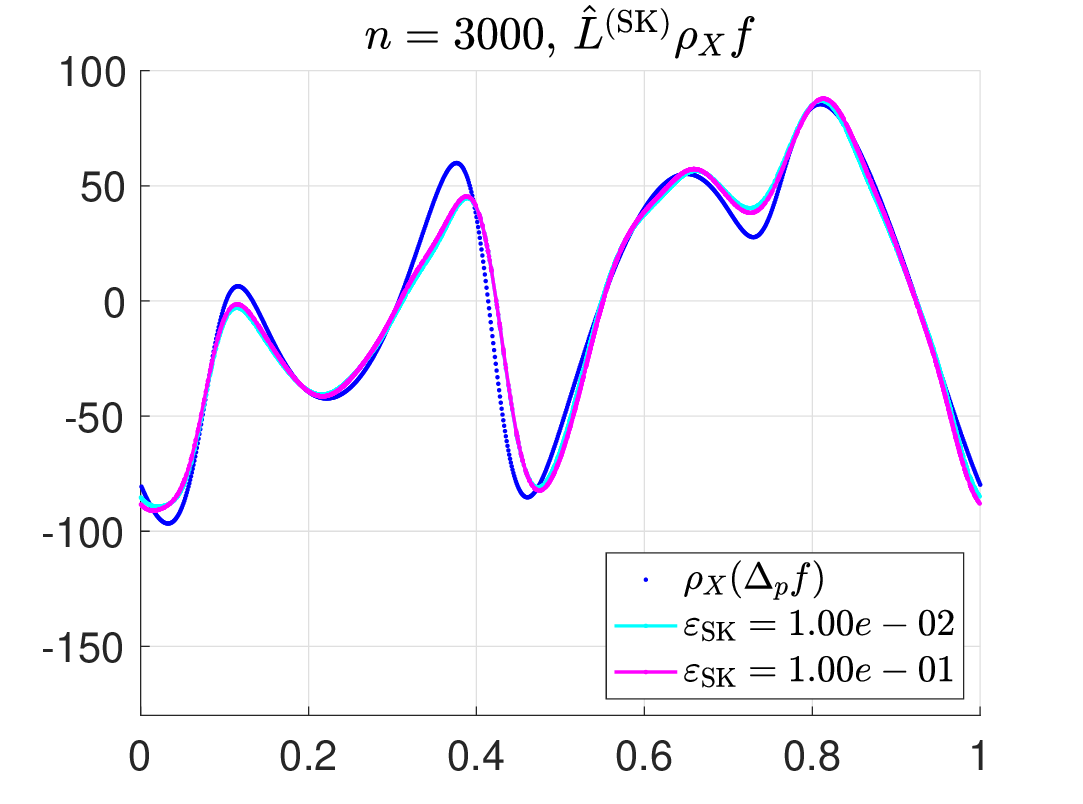}
\includegraphics[height=.24\linewidth]{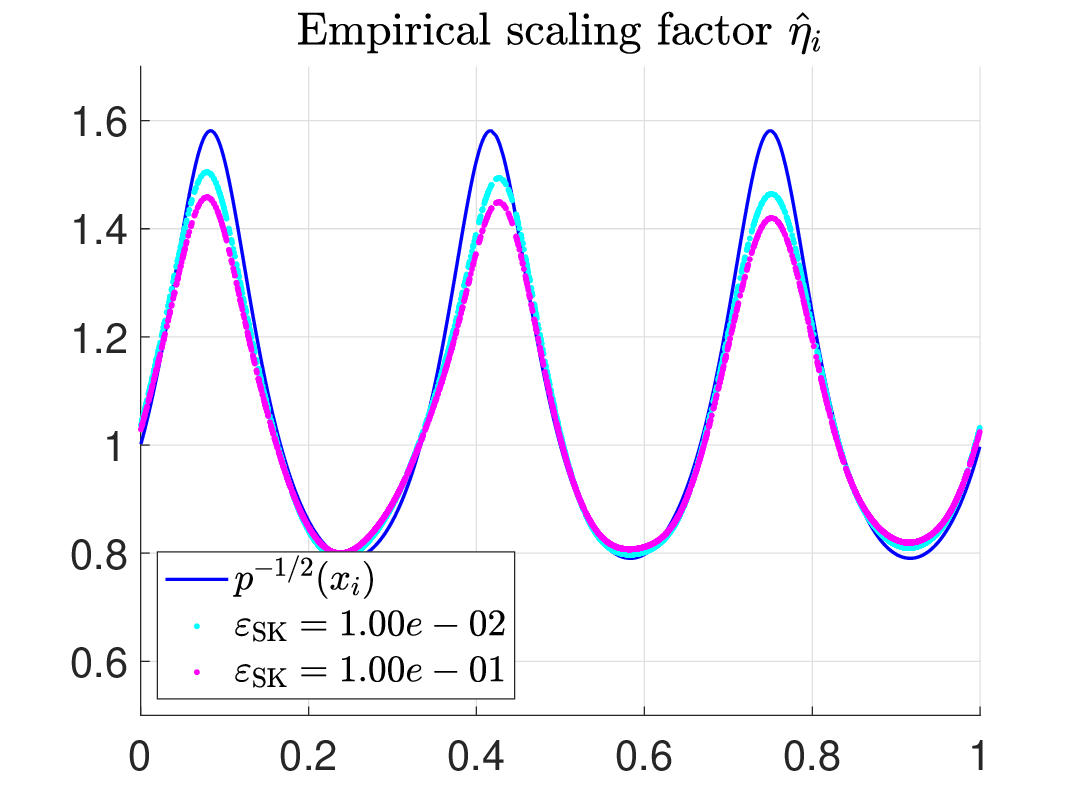}
\vspace{-5pt}
\caption{
\small
The results of one simulation of clean manifold data. 
The kernel bandwidth parameter is $\epsilon=$ \texttt{5.0119e-4}.
The averaged error over multiple simulations is shown in Figure \ref{fig:Ln-error-1d}.
Top panel: 
(Left) Data samples lying on a one-dimensional closed curve embedded in $\R^4$, where the first three coordinates are shown and colored by the values of kernel affinity $W_{i_0 j}$ on $x_j$ for a fixed $i_0$.
(Middle) Computed values of $\hat{L} \rho_X f$ compared with the true values of $\Delta_p f$.
(Right)  Comparison of $\hat{\eta}_i$ and $p^{-1/2}(x_i)$.
Bottom panel:
Results of bi-stochastic normalization computed with different  $\varepsilon_{\rm SK}$.
(Left) The convergence of SK iterations, showing the value of $\log_{10}  \| D_{\hat{\eta}} W^0 D_{\hat{\eta}} {\bf 1} - {\bf 1} \|_\infty$ v.s. number of iterations;
(Middle-right) Computed values of $\hat{L}^{(\rm SK)} \rho_X f$ and of $\hat{\eta}_i$.
All plots where $x$-axis is $[0,1]$ are plotted v.s. the intrinsic coordinate of $x_i$ (the arclength). 
}
\label{fig:Ln-1d-one-run}
\end{figure}

\paragraph{Data simulation.}
The simulated dataset consists of $n=3000$ data points on a 1D closed curve embedded in $\R^4$, where the density $p$ is not uniform on the manifold. 
An illustration of the data samples is shown in the left of the top panel of Figure \ref{fig:Ln-1d-one-run}.
We construct a specific smooth function $f$ on the manifold, and using the intrinsic coordinate of the curve, the function $\Delta_p f$ can be analytically computed. 
The functions $p$ and $f$ are shown in Figure \ref{fig:data-1d}, and additional details of data generation are in Appendix \ref{app:exp-detail}.

\paragraph{Method.}
We compute and compare two types of graph Laplacians, namely,

\begin{itemize}
\item[(a)] The bi-stochastically normalized graph Laplacian as defined in \eqref{eq:def-hatL}, denoted as $\hat{L}^{(\rm SK)}$ in this section.
The superscript $^{(\rm SK)}$ stands for `Sinkhorn-Knopp'.

\item[(b)] The $\alpha= {1}/{2}$ normalized graph Laplacian $\hat{L}^{(\rm DM)}$ as introduced in \cite{coifman2006diffusion}, see \eqref{eq:def-hatL-DM}.
The superscript $^{(\rm DM)}$ stands for `Diffusion Map'. More details about computing $\hat{L}^{(\rm DM)}$ is given in Appendix \ref{app:exp-detail}.
\end{itemize}

For a graph Laplacian $\hat{L}$ being either (a) or (b) above, the relative error of the point-wise convergence of the graph Laplacian is measured by 
\begin{equation}\label{eq:def-L1-err}
\text{RelErr}_2:=  \frac{\| ( -\frac{1}{\epsilon}\hat{L} \rho_X f )-  \rho_X (\Delta_p f) \|_2}{\|\rho_X (\Delta_p f) \|_2}
\quad
\text{RelErr}_\infty:=  \frac{\| ( -\frac{1}{\epsilon}\hat{L} \rho_X f )-  \rho_X (\Delta_p f) \|_\infty}{\|  \rho_X (\Delta_p f) \|_\infty},
\end{equation}
which can be computed once $n$ data samples $X = \{x_i\}_{i=1}^n$ are drawn. 
We conduct the simulations for fixed $n$ and varying values of the kernel bandwidth parameter $\epsilon$. 
In computing  $\hat{L}^{(\rm SK)}$, the approximate bi-stochastic normalization is implemented by SK iterations until the right-hand-side (r.h.s.) discrepancy in $\infty$-norm is less than a tolerance $\varepsilon_{\rm SK}  =$ \texttt{1e-3}.
We also simulate 500 replicas of the experiment (by simulating the dataset 500 times) and compute the mean of the two relative errors in \eqref{eq:def-L1-err}.

\begin{figure}[t]
\centering
\includegraphics[height=.26\linewidth]{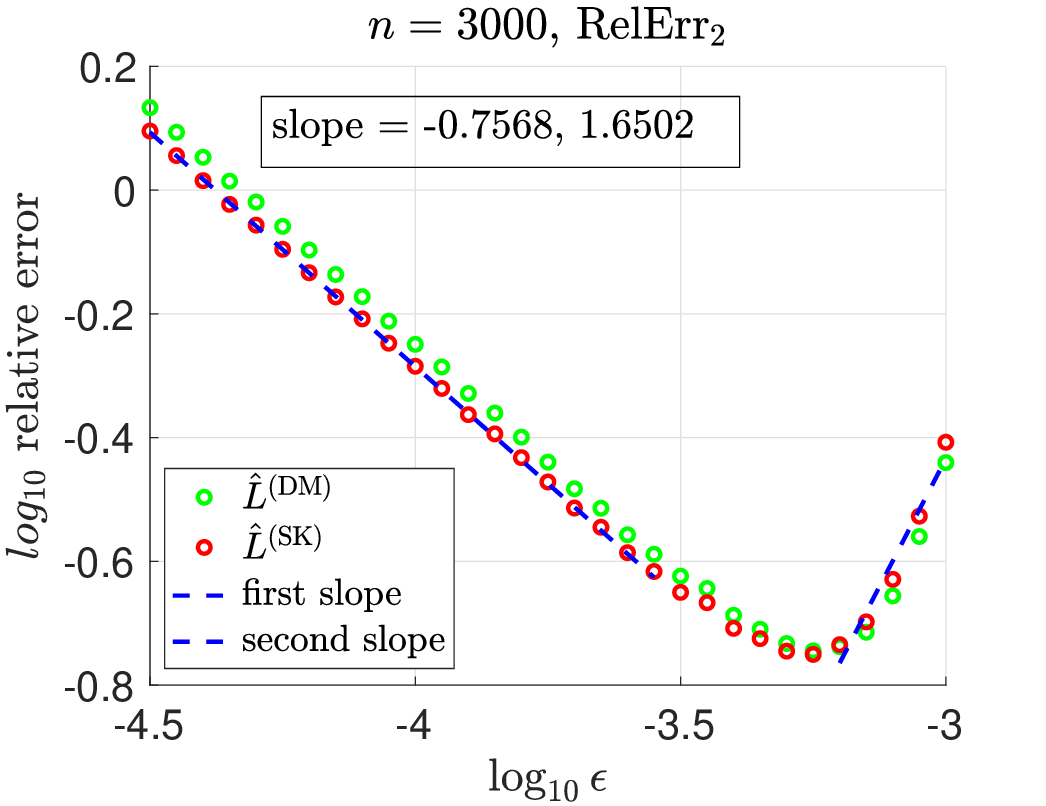}
\includegraphics[height=.26\linewidth]{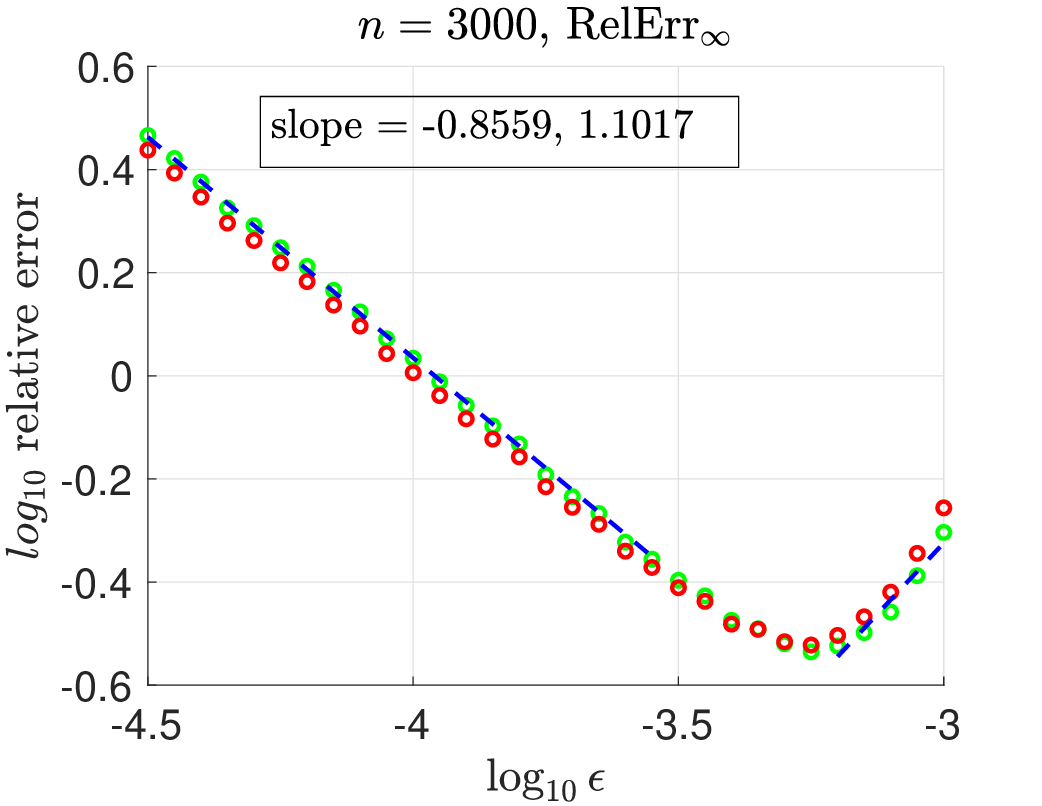}
\vspace{-5pt}
\caption{
\small
Average errors on manifold clean data.
Relative errors $\text{RelErr}_2$  as defined in \eqref{eq:def-L1-err}  of $\hat{L} \rho_X f$ computed by
(i) $\alpha$-normalized diffusion map graph Laplacian $\hat{L}^{(\rm DM)}$ 
and (ii) bi-stochastic normalized graph Laplacian $\hat{L}^{(\rm SK)}$ respectively.
plotted v.s. a range of values of $\epsilon$,
averaged over 500 replicas of simulation.
The two blue dashed lines fit the data on the two ends of small and large values of $\epsilon$,
where variance and bias error dominates respectively. 
(Right) Same plot for $\text{RelErr}_\infty$.
}
\label{fig:Ln-error-1d}
\end{figure}

\paragraph{Results.}
The result for one simulation is shown in Figure \ref{fig:Ln-1d-one-run}. For the SK iterations used to compute bi-stochastic normalizing factors, 
the algorithm typically terminates in about 5 iterations (the left plot in the bottom panel), starting from an all-one initialization of the scaling factor.
In all simulations here, we also observe that the lower-boundedness of empirical scaling factor $\hat{\eta}_i$ by a pre-specified constant $C_{\rm SK} = 0.5$ 
 is always fulfilled throughout the SK iterations. 
The results using a reduced number of SK iterations, equivalently with a larger tolerance $\varepsilon_{\rm SK}$, are shown in the bottom panel. 
For $\hat{L}^{(\rm SK)}$, the approximation of $\hat{L} \rho_X f$ to $\Delta_p f$ appears to hold with furtherly reduced number of iterations. 

The averaged errors are
plotted in log-log v.s. $\epsilon$ in Figure \ref{fig:Ln-error-1d}. 
In the left plot of 2-norm error, when the variance error dominates (at the end of small $\epsilon $) the error shows a scaling of about -0.75, which is consistent with the $\epsilon^{-(d/2+1)/2}$ rate in Theorem \ref{thm:hatLn-2norm} as $d=1$ here. 
At the large $\epsilon$ end when bias error dominates, the 2-norm error shows a scaling about 1.6 which cannot be explained by the theory. 
For the $\infty$-norm relative error, the $O(\epsilon)$ rate of bias error is revealed in the plot. 
Overall, these experimental results support the theory in Section \ref{sec:theory-clean-data}.
Note that while Theorem \ref{thm:hatLn-2norm} proves the 2-norm consistency only, 
the $\infty$-norm consistency of  $\hat{L}^{(\rm SK)}$ appears to hold as well, 
and the error is also close to that of $\hat{L}^{(\rm DM)}$.

As a remark, here we use un-normalized graph Laplacians taking the form as $L_{\rm un}:= D(K)-K$ and $K$ is the graph affinity matrix constructed in different ways. 
One may also consider normalized graph Laplacian  as $L_{\rm rw}:= I - D(K)^{-1}K$, the subscript $_{\rm rw}$ standing for `random walk'.
For exact bi-stochastic normalized affinity matrix $K$, $D(K) = I$, and the two are the same.
On clean manifold data, we found that using $L_{\rm rw}$, for both $\hat{L}^{(\rm SK)}$ and $\hat{L}^{(\rm DM)}$, gives similar results as in Figures  \ref{fig:Ln-1d-one-run} and \ref{fig:Ln-error-1d}.

\begin{figure}[t]
\centering
\includegraphics[height=.24\linewidth]{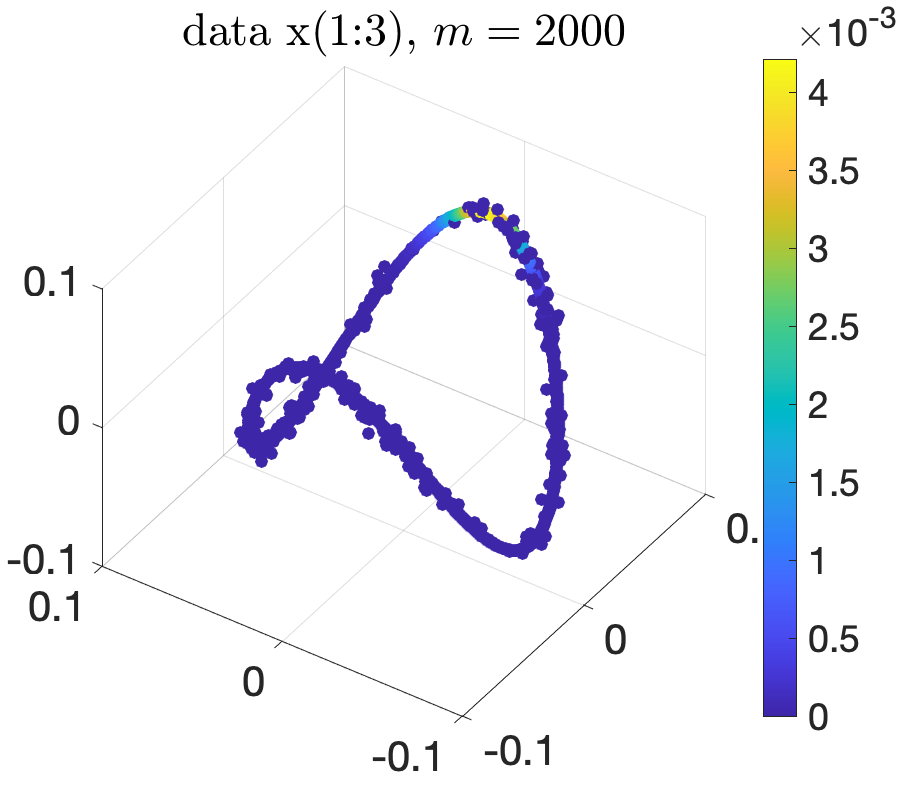}
\includegraphics[height=.24\linewidth]{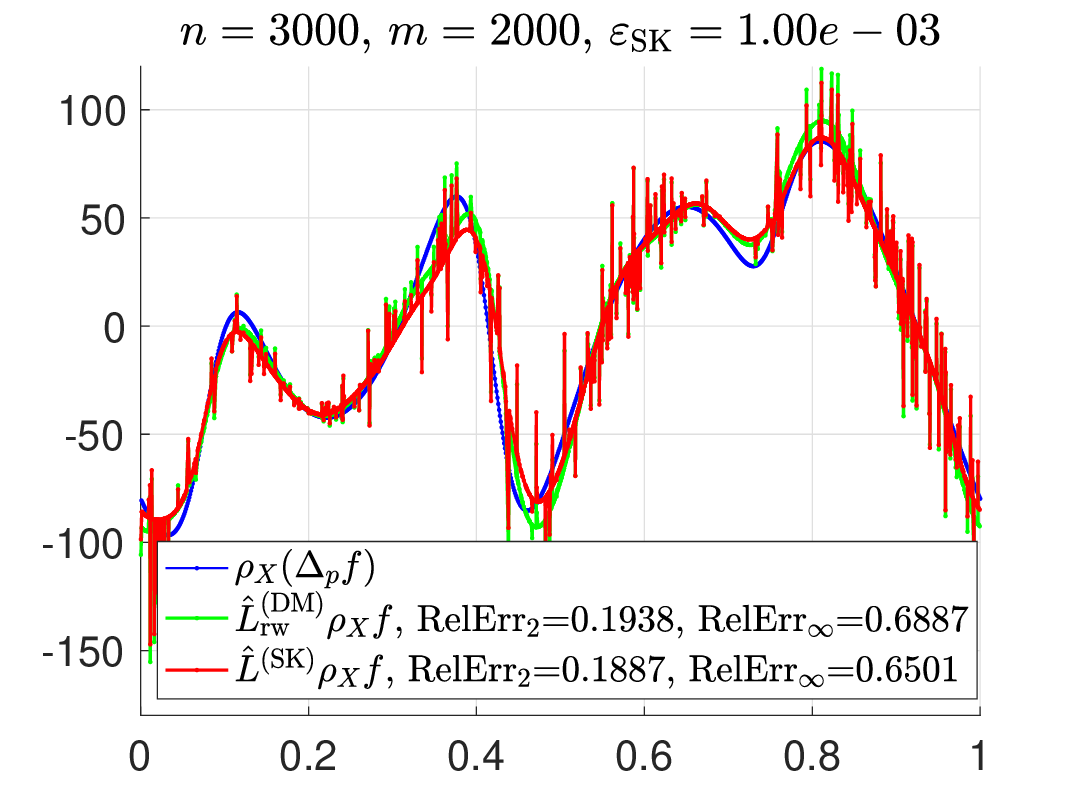}
\includegraphics[height=.24\linewidth]{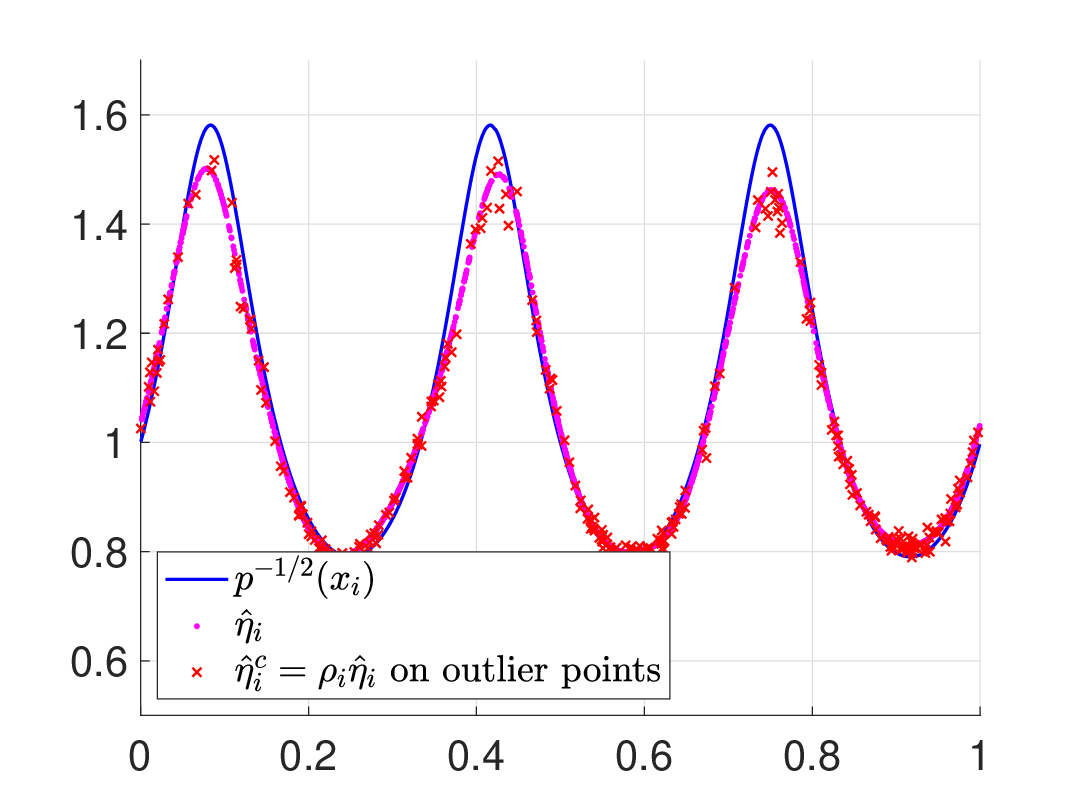}\\
\includegraphics[height=.24\linewidth]{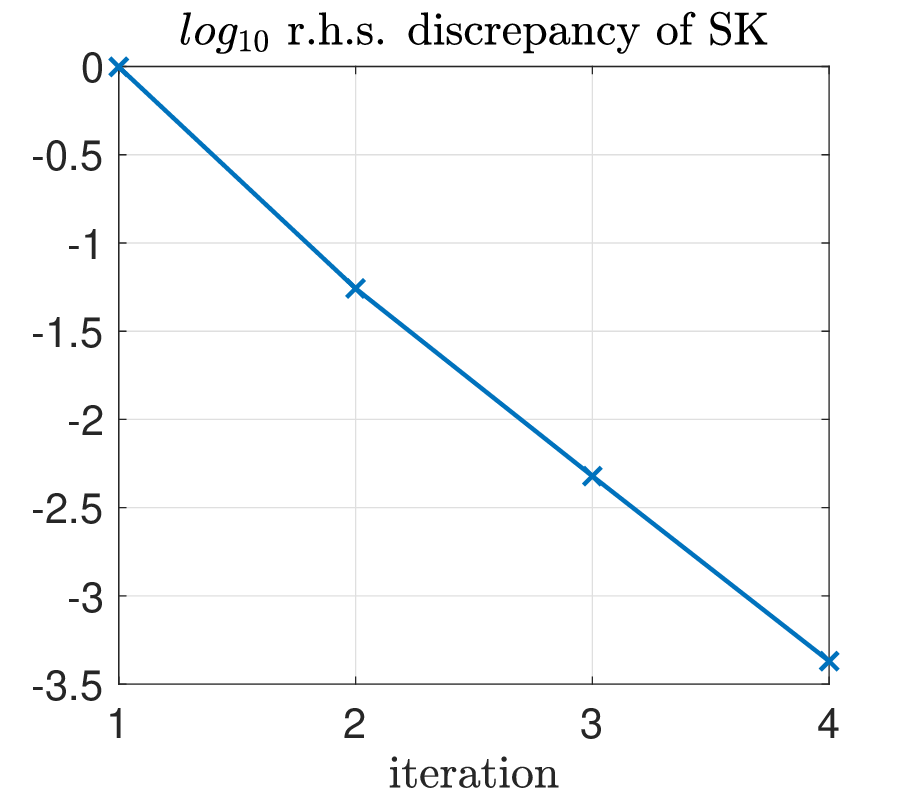}
\includegraphics[height=.24\linewidth]{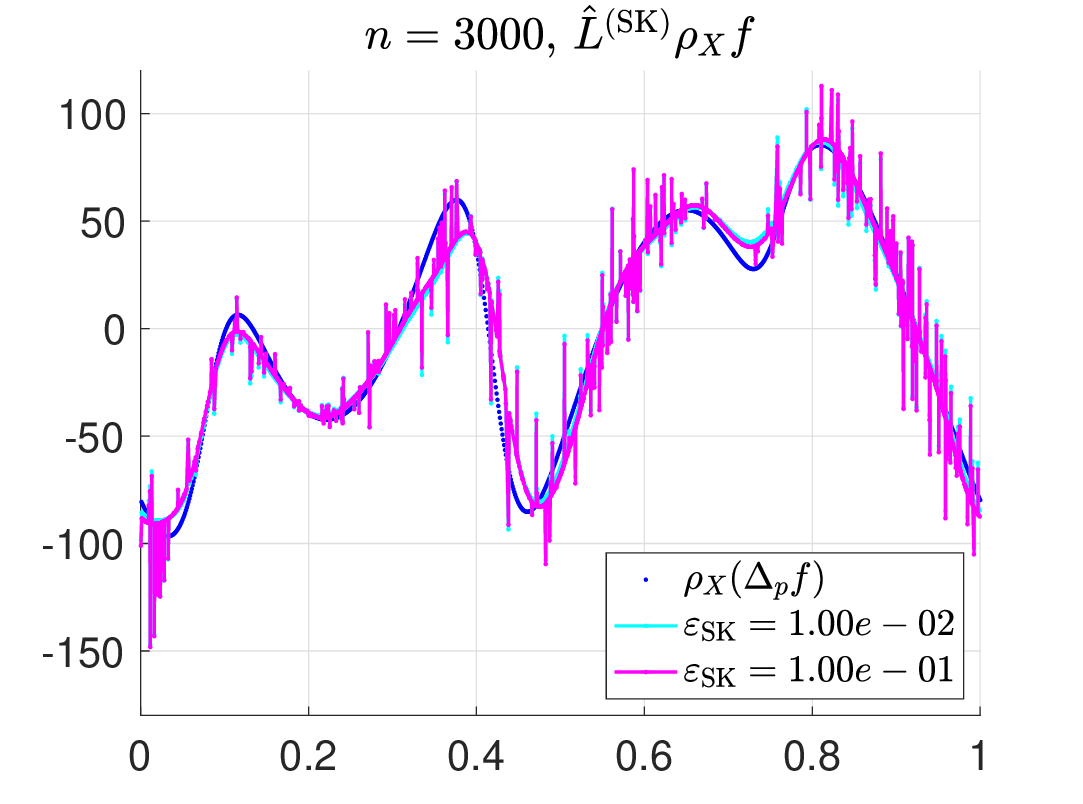}
\includegraphics[height=.24\linewidth]{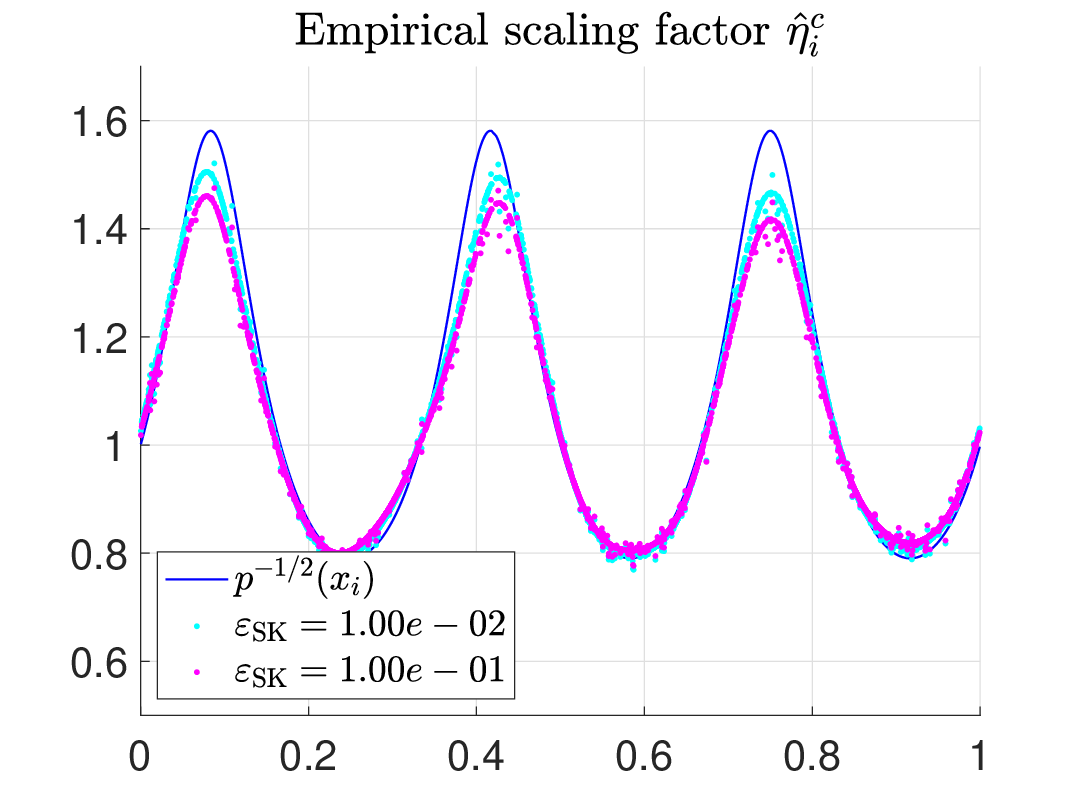}
\vspace{-5pt}
\caption{
\small
Results of $\hat{L} \rho_X f$ computed from manifold data corrupted by i.i.d outlier noise in $\R^{m}$, $m=2000$.
The kernel bandwidth parameter $\epsilon=$ \texttt{5e-4}.
Top panel: 
(Left) The first 3 coordinates of the data vectors, colored by the kernel affinity values $W_{i_0 j}$  on $x_j$ (point $i_0$ is an in-lier). 
(Middle) Computed values of $\hat{L} \rho_X f$  compared with the true values of $\Delta_p f$. 
(Right) Values of $\hat{\eta}_i$ on in-liers (those on out-liers are large and outside the plot axis) and those of $\hat{\eta}^c_i$ on the out-liers,
compared with  $p^{-1/2}(x_i)$.
Bottom panel:
Same plots as in the bottom panel of Figure \ref{fig:Ln-1d-one-run} on data with outlier noise, except that the (Middle) plot shows the values of $\hat{\eta}_i^c$.
}
\label{fig:Ln-1d-noise}
\end{figure}

\subsection{Manifold data with outlier noise}

We embed the data from the previous subsection into $\R^m$ (by letting the manifold data in $\R^4$ to be in the first four coordinates), and then add outlier noise to the data vectors.
The noise model follows a similar setup as in Remark \ref{rk:varepsilon-z} and satisfies  Assumption \ref{assump:A3} when the ambient dimensionality $m$ is large.
 We consider both heteroskedastic and i.i.d. (non-heteroskedastic) noise to be detailed below.

\subsubsection{Operator point-wise convergence }\label{subsec:point-wise-with-noise}

\paragraph{Data simulation.}
We set the ambient dimension to be $m= 2000$ and the
number of samples $n = 3000 $.
The clean manifold data $x_i^c$ is obtained by embedding the 1D manifold data in $\R^m$.
The outlier noise model follows the setup in Assumption \ref{assump:A3},
$b_i \sim {\rm Bern}( p_i)$,  and $z_i \sim \calN( 0, (\sigma_i^2/m) I_m )$, $\sigma_i = \sigma_{\rm out} \sqrt{ \gamma_i}  $,
and the parameters are to be specified below.
\begin{itemize}
\item[(i)]  i.i.d. noise.
  $p_i \equiv 0.1$, so that $b_i \sim {\rm Bern}(0.1)$ i.i.d.
$\gamma_i \equiv 1$ and $\sigma_{\rm out}^2 = 10^{-2}$,
and then  $z_i \sim \calN \left(0,  ({\sigma_{\rm out}^2}/{m}) I_m \right)$ i.i.d.
for $i=1,\cdots, n$. 
\item[(ii)]  Heteroskedastic noise.
For the distribution of $b_i\sim {\rm Bern}( p_i)$, we set $p_i = 0.05+ 0.9 (  (1-t(x_i^c)+ u_i) \mod 1 )$, where $u_i \sim {\rm Unif}(0,1)$ independently.
This makes the value of $p_i$ lie between 0.05 and 0.95 on $S^1$. 
For the distribution of $z_i$, we set 
$\sigma_{\rm out}^2 = {10^{-3}}$,
$\gamma_i = 0.9 \gamma_i^{(1)} + 0.1 \gamma_i^{(2)}$, where $\gamma_i^{(2)} \sim \rm {Unif}(0,3)$, and 
$\gamma_i^{(1)} = \gamma_{\rm o}(x_i^c)$, $\gamma_{\rm o}(x) = 10^{ 1-( (1+\sin(2\pi t(x)))/2 )^2 }$.
In both expressions of $p_i$ and $\gamma_i$,
$t(x) \in [0,1]$ is the intrinsic coordinate of a clean manifold data sample $x \in S^1$.
\end{itemize}
Note that the outlier proportion $p_i$ can be as large as 0.95 in the setup of heteroskedastic noise.

\paragraph{Method.}
The approximate bi-stochastic scaling is solved by setting the tolerance $\varepsilon_{\rm SK} =$ \texttt{1e-3} and $C_{\rm SK} = 0.1$ (but the projection step is never invoked). 
The other setup to compute the two Laplacians $\hat{L}^{\rm (SK)}$ and $\hat{L}^{\rm (DM)}$
and the relative errors are the same as in Section \ref{subsec:exp-clean}.

\begin{figure}[t]
\centering
\includegraphics[height=.26\linewidth]{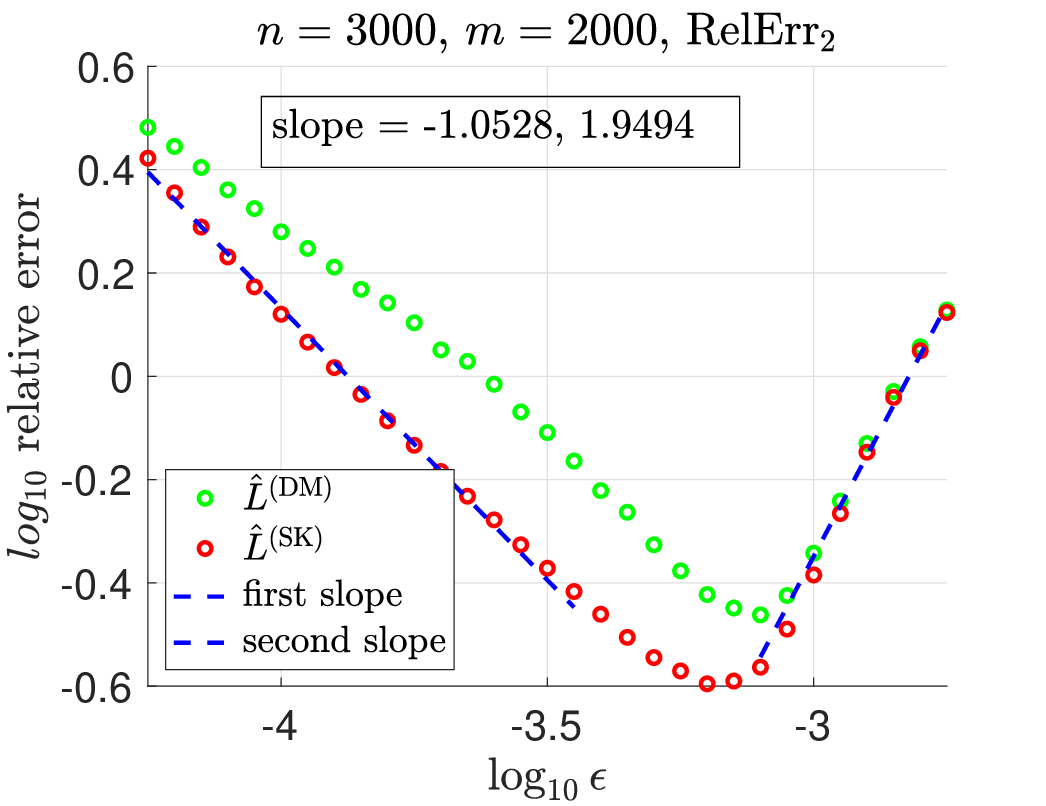}
\includegraphics[height=.26\linewidth]{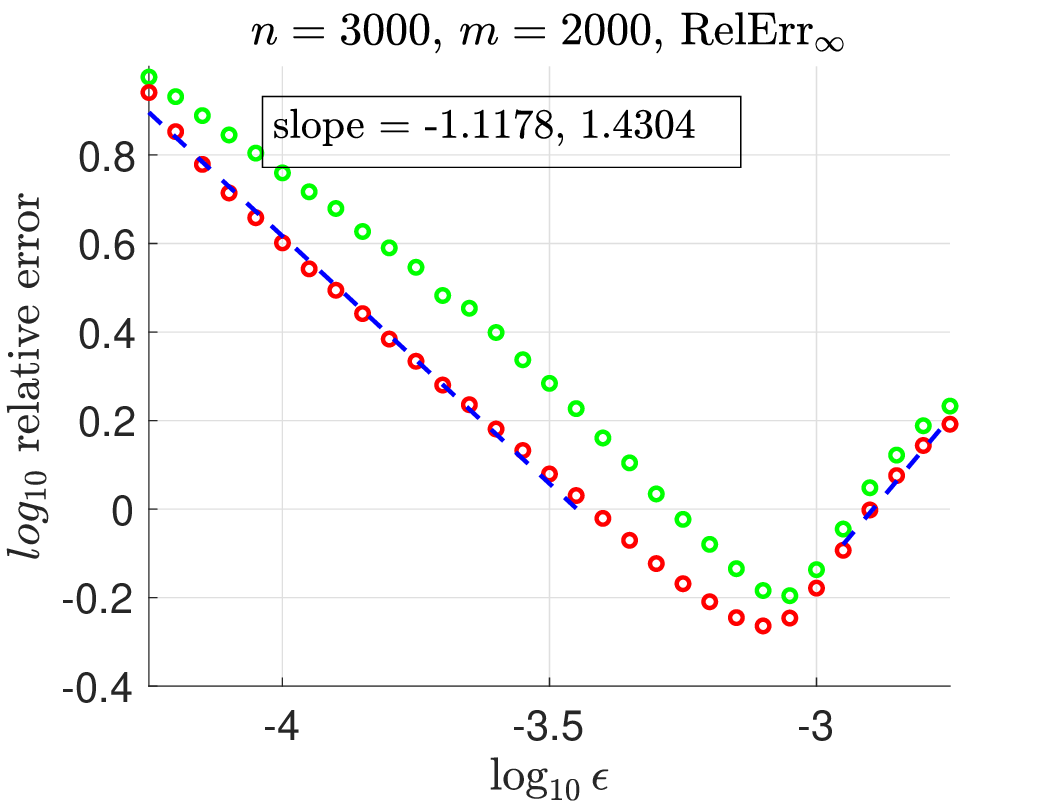}
\vspace{-5pt}
\caption{
\small
Same plots as in Figure \ref{fig:Ln-error-1d}
for average errors on manifold data with heteroskedastic outlier noise.
}
\label{fig:Ln-error-1d-noise}
\end{figure}

\begin{table}[t]
    \centering
    \small
    \begin{tabular}{ c |c c | c c}
    \hline
    			  				&   \multicolumn{2}{ c }{heteroskedastic noise}	& \multicolumn{2}{ |c }{i.i.d. noise}	  \\
    \hline
    			  				&   	1st pair 		&  2nd pair  		&  1st pair 		&  	2nd pair  \\
    $\hat{L}_{\rm rw}^{(\rm DM)}$	&  0.1268 (0.0363) 	&  0.2024 (0.0736) 	& 0.0089 (0.0036) 	&	0.0378 (0.0146)	\\ 
    $\hat{L}_{\rm rw}^{(\rm SK)}$		&  0.0042 (0.0018) 	&  0.0172 (0.0071) 	& 0.0030 (0.0015) 	&  	0.0112 (0.0045) 	\\ 
    \hline
    \end{tabular}
    \caption{
    \small
    MSE errors of the first (and second) pair of non-trivial eigenvectors of the traditional and bi-stochastically normalized graph Laplacians
    compared to limiting eigenfunctions $\{ \sin(2\pi t),  \cos(2\pi t) \}$ (and  $\{ \sin(4\pi t),  \cos(4\pi t) \}$) evaluated on $x_i^c$ 
    after eigen-space alignment. 
    The MSE values are averaged over 100 replicas and the standard deviation values are in the brackets. 
    Data and noise models and other experimental settings are the same as in Figure \ref{fig:embed-1d-noise-type1} for heteroskedastic noise
    and Figure \ref{fig:embed-1d-noise-type2} for i.i.d. noise. 
    }
    \label{tab:eigvect-mse}
\end{table}

\paragraph{Results.}
The result for one simulation with (i) i.i.d. noise is shown in Figure \ref{fig:Ln-1d-noise}.
As illustrated by the right plot in the top panel, the computed empirical factor $\hat{\eta}_i$ are lower bounded by an $O(1)$ constant greater than 0.6.
Again we observe that the lower-boundedness of $\min_i \hat{\eta}_i$ by a constant $C_{\rm SK} = 0.5$ is always fulfilled throughout the SK iterations.
The plot shows the values of $\hat{\eta}^c_i = \rho_i \hat{\eta}_i $ on the outlier points (red cross), which are close to the function values of $p^{-1/2}(x_i)$,
and since $\rho_i$ are much smaller than 1, the values of $\hat{\eta}_i $ on these outlier points are much larger and outside the plot box. 
While data vectors have additive outlier noise, the SK iterations still converge exponentially fast,
and the r.h.s. discrepancy achieves less than \texttt{1e-3} in typically less than 10 iterations. 
With higher tolerance of \texttt{1e-2} and \texttt{1e-1}, similar results of $\hat{L} \rho_X f$ can be obtained. 
Compared to with clean data (Figure \ref{fig:Ln-1d-one-run}), the approximation of $\Delta_p f$ has larger $\infty$-norm error and $2$-norm error,
while the increase of $2$-norm error is smaller and the relative error remains less than 20\%.
These numerical results are consistent with the theoretical analysis in Section \ref{sec:theory-noise-outlier}, see Theorem \ref{thm:hatL-convergence-noise}.

The averaged errors (computed over 500 times) are from data with (ii) heteroskedastic noise.
The results are shown in Figure \ref{fig:Ln-error-1d-noise} in the same plotting format as in Figure \ref{fig:Ln-error-1d}.
After adding noise, the errors in Figure \ref{fig:Ln-error-1d-noise} are larger than those in Figure \ref{fig:Ln-error-1d}. 
The increase in the 2-norm error is less severe than the $\infty$-norm error.  
Compared with $\hat{L}^{\rm (DM)}$, the bi-stochastic graph Laplacian $\hat L^{(\rm SK)}$ has a smaller error in Figure \ref{fig:Ln-error-1d-noise}
and shows better noise robustness.
Overall, the performances of $\hat{L}^{\rm (DM)}$ and $\hat L^{(\rm SK)}$ 
on clean (Figures \ref{fig:Ln-1d-one-run} and \ref{fig:Ln-error-1d})
and i.i.d noise data (Figure \ref{fig:Ln-1d-noise}) are close,
and the advantage of $\hat L^{(\rm SK)}$ is more drastic when with heteroskedastic noise (Figure \ref{fig:Ln-error-1d-noise}). 
We will continue the comparison on spectral embedding in the next sub-section.

\subsubsection{Spectral embedding }\label{subsec:spec-embed}


While we postpone the full theoretical analysis of eigen-convergence to future work, we include numerical results on the eigenvectors 
(the spectral embedding of data samples)
and compare the bi-stochastic normalization v.s. traditional normalization
when data contain high dimensional outlier noise.

\begin{figure}[t]
\centering
\includegraphics[height=.24\linewidth]{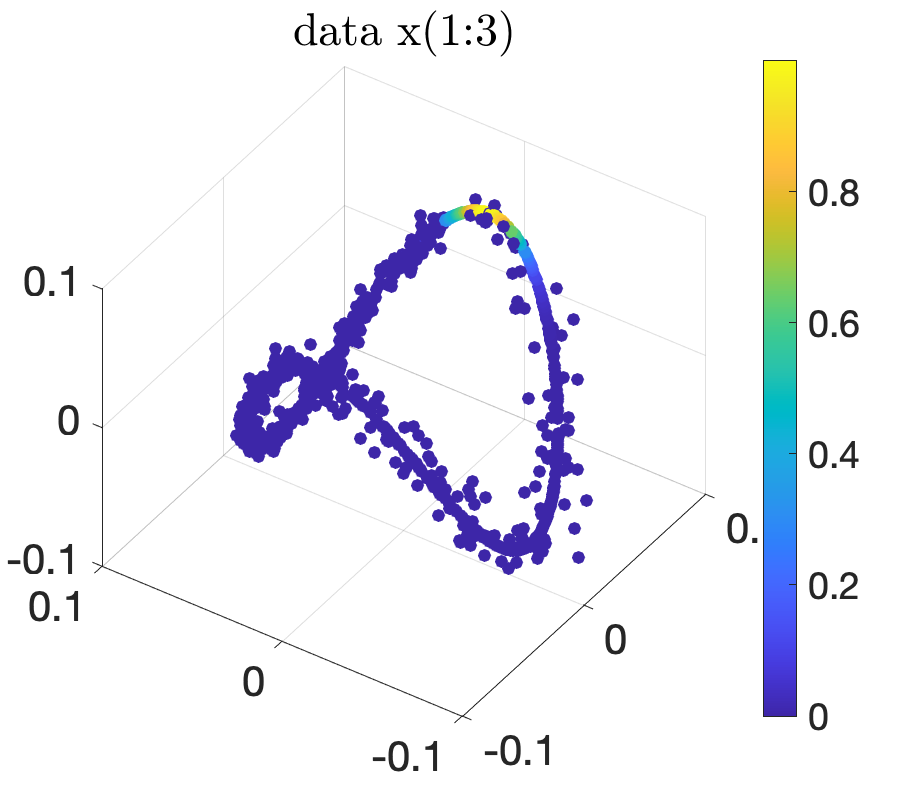}
\includegraphics[height=.24\linewidth]{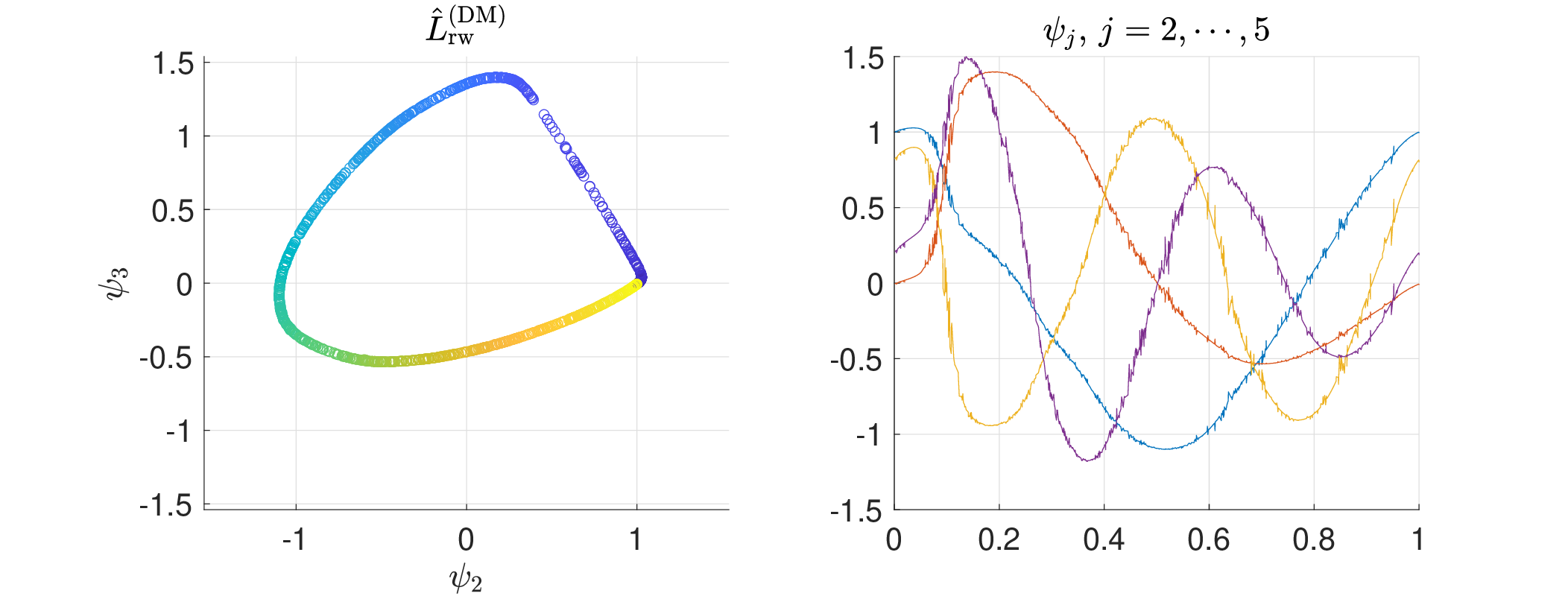}\\
\includegraphics[height=.24\linewidth]{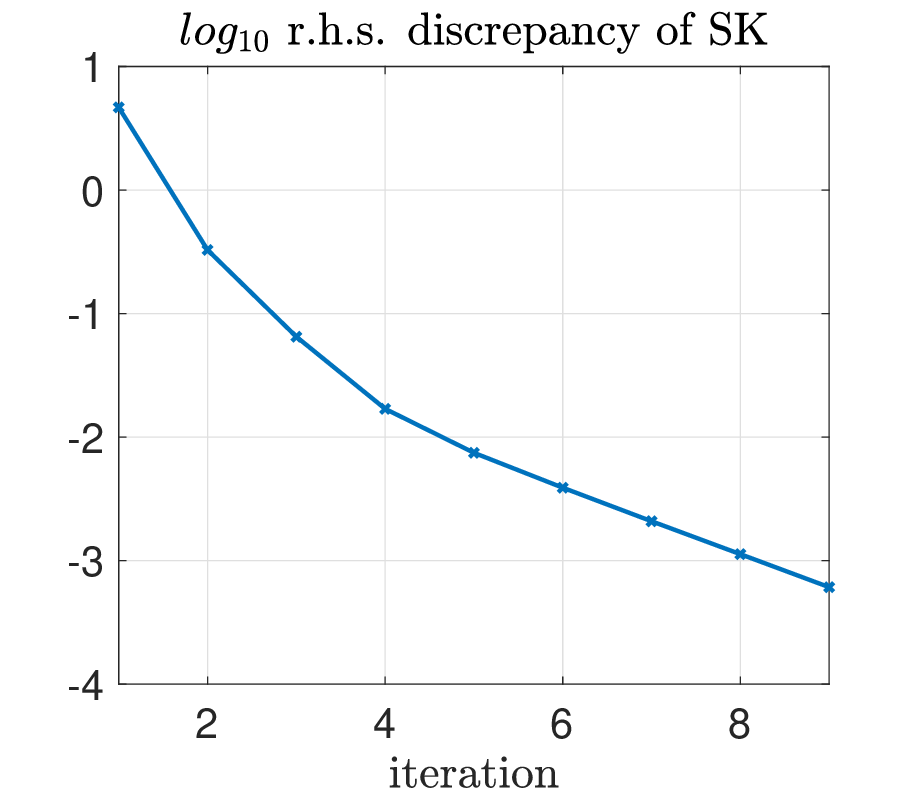}
\includegraphics[height=.24\linewidth]{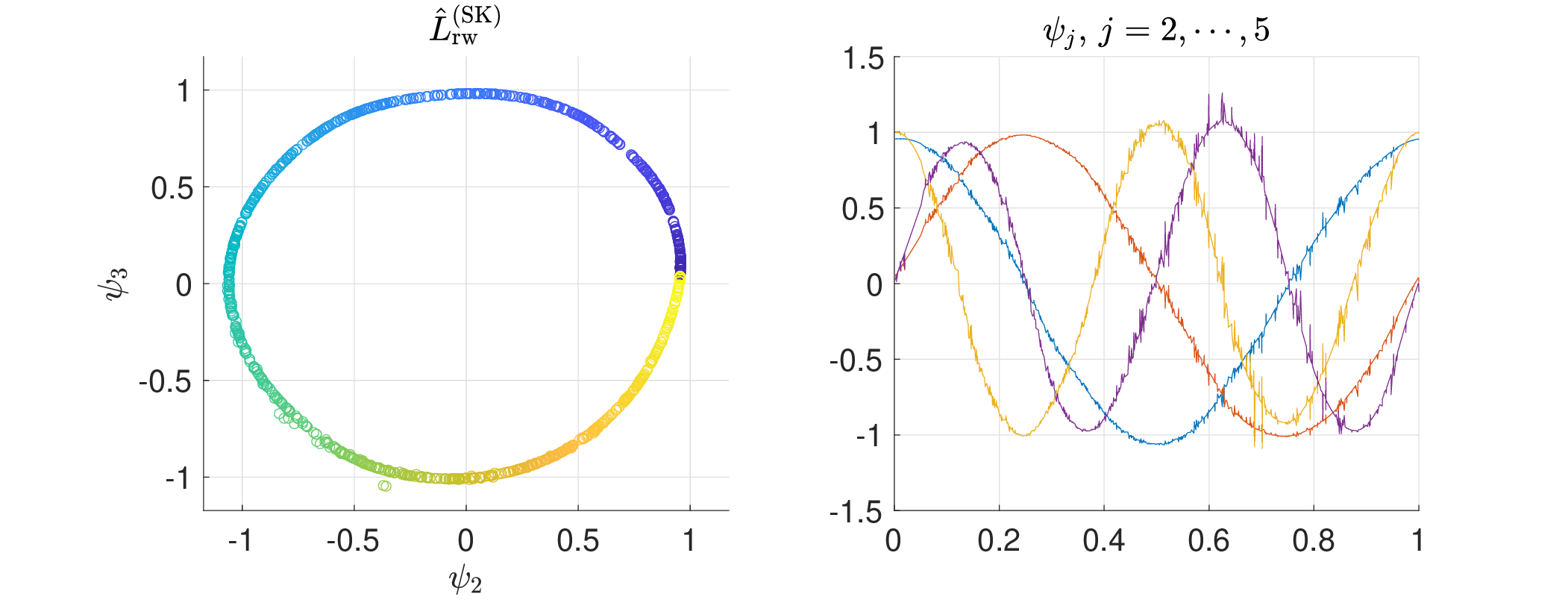}
\vspace{-5pt}
\caption{
\small
First few (non-constant) eigenvectors of $\hat{L}_{\rm rw}$ computed from $n=1000$ manifold data with heteroskedastic outlier noise added in $\R^m$, $m=2000$.
The Gaussian kernel affinity \eqref{eq:def-G-affinity} is computed with  $\epsilon=$ \texttt{5e-4}. 
Top panel: 
(Left) First 3 coordinates of data vectors colored by Gaussian kernel affinity values $G_{i_0 j}$  on $x_j$ (point $i_0$ is an in-lier). 
(Middle) Spectral embedding by the first two non-constant eigenvectors of $\hat{L}_{\rm rw}^{(\rm DM)}$, colored by the intrinsic coordinate of $x_i$.
(Right) First four  non-constant eigenvectors plotted against the intrinsic coordinate of $x_i$.
Bottom panel:
(Left) The convergence of Sinkhorn iterations;
(Middle-right)
Same plots as in the top panel for eigenvectors of $\hat{L}_{\rm rw}^{(\rm SK)}$.
In this simulation, 
the MSE errors (defined in Section \ref{subsec:spec-embed}) for DM is 
0.1234 and 0.1659
for the first and second pair of harmonics,
and those for SK is 
0.0030 and 0.0082.
}
\label{fig:embed-1d-noise-type1}
\end{figure}

\paragraph{Data simulation.}
We draw $n = 1000$ samples embedded in $\R^{m}$, $m=2000$,
where the clean manifold data (before adding the outlier noise) is uniformly distributed on $S^1$.
The noise model follows that in Section \ref{subsec:point-wise-with-noise} with some changed parameters:
\begin{itemize}
\item[(i)]  i.i.d. noise.
For the distribution of $b_i$, we set $p_i \equiv 0.95$.
For the distribution of $z_i$, we set $\sigma_{\rm out}^2 = 10^{-2}$, $\gamma_i \sim \rm {Unif}(0,3)$ i.i.d.
\item[(ii)]  Heteroskedastic noise.
Same as in Section \ref{subsec:point-wise-with-noise} except that $\sigma_{\rm out}^2 = 10^{-2}$. 
\end{itemize}

\paragraph{Method.}
We compute the eigenvectors associated with the smallest few eigenvalues of $L_{\rm rw} = I- D^{-1} W$
of the bi-stochastically normalized and ($\alpha=1/2$) Diffusion Map graph Laplacians respectively,
that is, setting $W$ to be $\hat{W} = D_{\hat{\eta}} W^0 D_{\hat{\eta}}$ as in Section \ref{sec:theory-noise-outlier}
and $\tilde{W}$ as in \eqref{eq:def-hatL-DM}.
Note that in the definitions of both  $\hat{L}^{(\rm DM)}$ and $\hat{L}^{(\rm SK)}$, any global constant multiplying to $W_{ij}$ will cancel out.
Thus, in practice, there is no need to know the constant $\epsilon^{-d/2}$ in \eqref{eq:def-W} nor $(4\pi)^{d/2}$ in \eqref{eq:def-g-gaussian},
and it suffices to compute the Gaussian affinity matrix $G$,
\begin{equation}\label{eq:def-G-affinity}
G_{ij}  = \exp \left\{ - \frac{ \|x_i - x_j\|^2 }{4 \epsilon}  \right\}.
\end{equation}
In computing bi-stochastically normalized graph Laplacian, the SK iterations is solved with $\varepsilon_{\rm SK} =$ \texttt{1e-3}.
Because we work with the matrix $G$ which differs from the matrix $K$ by a scalar multiplication, the constant factor needs to be included in the lower-bound constant $C_{\rm SK}$ as well.
In this experiment, we turn off the projection step in SK iterations, and empirically we observe that the iterations terminate after achieving the discrepancy, and the empirical scaling factor (after multiplying back the theoretical constant) observes the theoretical $O(1)$ lower bound.

\paragraph{Results.}
For data with heteroskedastic outlier noise, 
the results are shown in Figure \ref{fig:embed-1d-noise-type1}.
The first eigenvalue is zero and the corresponding eigenvector is the constant vector, thus only the non-constant eigenvectors are shown. 
Because the data density $p$ is uniform on $S^1$, the operator $\Delta_p$ is the Laplace-Beltrami operator, and the limiting eigenfunctions are sines and cosines. 
To compare the computed eigenvectors to the limiting eigenfunctions, 
we pair the first two (non-trivial) eigenvectors into a 2-dimensional eigen-space
and compute the mean-squared error (MSE) with the $n$-by-2 array formed by
$\{ \sin(2\pi t),  \cos(2\pi t) \}$ evaluated on $x_i^c$ 
after alignment (up to a 2-by-2 rotation of the 2-dimensional eigen-space and a scaler multiplication).
Similarly, we pair the next two eigenvectors and compute the MSE with $\{ \sin(4\pi t),  \cos(4\pi t) \}$ after alignment. 
The eigenvectors shown in the plots are after the alignment.

The mean MSE over 100 replicas are shown in Table \ref{tab:eigvect-mse},
including the cases with heteroskedastic noise and with i.i.d. noise.
The one-run result with i.i.d. noise is shown in Figure \ref{fig:embed-1d-noise-type2}.
Quantitatively, as shown in Table \ref{tab:eigvect-mse}, 
the performance gap between $\hat{L}_{\rm rw}^{(\rm SK)}$ and $\hat{L}_{\rm rw}^{(\rm DM)}$ is more significant under the  heteroskedastic noise.
With i.i.d. noise, the bi-stochastic normalization still shows an advantage that is statistically significant (the average MSEs in the 1st pair differ more than 1 standard deviation away, and in the 2nd pair more than 1.5 standard deviation away). 
The plots and the quantitative error indicate that the spectral embedding by $\hat{L}_{\rm rw}^{(\rm SK)}$ is more robust to outlier noise than with the traditional normalization.

\section{Discussion}\label{sec:discuss}

The analysis in the paper can be extended in several directions.
First, an $\infty$-norm consistency result of the empirical scaling factor of the bi-stochastic normalization would be interesting.
Such a result can lead to $\infty$-norm consistency of the point-wise convergence of the graph Laplacian,
which is suggested by the numerical experiments at least for clean manifold data.
The $\infty$-norm consistency of scaling factors will also help to further derive the eigen-convergence analysis based on Section \ref{subsec:discuss-eigenvector}.
For the eigen-convergence with rates, 
in addition to the variational approach as detailed in this work,
one can also try to combine our analysis with the spectral stability approach in \cite{wormell2021spectral}.

Next, the analysis of noise robustness can be extended beyond the technical assumption (A3), e.g., by considering more specific class of high-dimensional noise and relaxing the condition on Bernoulli random variable $b_i$ in the outlier noise model. 
Meanwhile, it would also be desirable to remove the lower-boundedness constraint  in the approximate matrix scaling problem.
The parameter $C_{\rm SK}$ is introduced for theoretical purposes in the current work,
and it appears to be  not needed in practice at least when there are enough data samples
and the convergence of SK iterations encounters no difficulty.
At last, it would be interesting to study the tightness of the noise-robustness analysis,
for example, by considering lower bounds of the error due to noise, 
for the bi-stochastic normalization as well as the traditional ones.

On the computation side, one may develop more efficient algorithms to solve the (approximate) matrix scaling problem, both in computational time and memory usage. A scalable algorithm to large sample size (graph size) would make bi-stochastically normalized graph Laplacian a practical tool for various applications. 
Finally, it would be helpful to examine the performance and explore the potential advantage of  bi-stochastically normalized graph Laplacian on real data and real noise scenario 
beyond the simulation experiments in this work.

\section{Proofs}\label{sec:proofs}

\subsection{Proofs in Section \ref{sec:approx-matrix-scaling-problem}}

\subsubsection{Proof of Lemma \ref{lemma:construct-bar-eta}}

We introduce a lemma  which characterizes the small $\epsilon$ expansion of  kernel integral on manifold applied to a test function $f$.
The argument was firstly derived as Lemma 8 in  \cite{coifman2006diffusion}, and used in several follow-up analysis of Diffusion Maps. 
Here we reproduce the version of Lemma A.5 in \cite{cheng2021convergence} and apply to when the kernel function is Gaussian.

\begin{lemma}[Lemma A.5 in \cite{cheng2021convergence}, {Lemma 8 in  \cite{coifman2006diffusion}}]
\label{lemma:h-integral-diffusionmap}
Under Assumption (A1), and let $g$ be the Gaussian kernel function as in \eqref{eq:def-g-gaussian}.
There is $\epsilon_0 >0$ a constant depending on $\calM$, 
such that when $ 0 < \epsilon < \epsilon_0$, 
for  any $f \in C^4({\calM})$,
\[
 \epsilon^{- {d}/{2}} \int_{\calM} g \left( \frac{ \| x-y \|^2}{\epsilon} \right) f(y) dV(y)
=   f(x) +  \epsilon ( \omega f + \Delta_{\calM}  f)(x) 
+ O (\epsilon^2) ,
\]
where $\omega(x)$ is determined by local derivatives of the extrinsic manifold coordinates at $x$.
In the $O(\epsilon^2)$ term, the constant involves up to the 4-th derivative of $f$ on ${\calM}$.
Specifically, if the residual term is denoted as $r_{f, \epsilon}(x)$, it satisfies 
$ \sup_{x \in {\calM}} |r_{f, \epsilon}(x)| \le C(f) \epsilon^2$, where
$C(f) = c({\calM}) (1+ \sum_{l=0}^4 \| D^{(l)} f \|_\infty)$.
\end{lemma}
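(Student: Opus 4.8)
The plan is to prove Lemma~\ref{lemma:h-integral-diffusionmap} by the classical localization-plus-normal-coordinates expansion, specialized to the Gaussian profile $g(\xi)=(4\pi)^{-d/2}e^{-\xi/4}$. First I would localize: since $g$ decays exponentially, the part of the integral over $\{y\in\calM:\|x-y\|^2>K\epsilon\log(1/\epsilon)\}$ is bounded by a constant times $\epsilon^{-d/2}\epsilon^{K/4}\|f\|_\infty\,\mathrm{Vol}(\calM)$, which is smaller than any fixed power of $\epsilon$ once $K$ is large; and because $\calM$ is compact with positive reach, for small $\epsilon$ the surviving region lies in a geodesic ball $B_x(\delta)$ with $\delta=\delta(\epsilon)\to0$, on which $\exp_x:B_0(\delta)\subset T_x\calM\to\calM$ is a diffeomorphism. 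Choosing an orthonormal frame to identify $T_x\calM\cong\R^d$ and substituting $y=\exp_x(u)$, I would invoke the bounded-geometry Taylor expansions, uniform in $x\in\calM$ by compactness: the Riemannian volume element $dV(\exp_x u)=(1+O(|u|^2))\,du$; the extrinsic chord length $\|x-\exp_x(u)\|^2=|u|^2+O(|u|^4)$, with no cubic term because $\langle\mathrm{II}_x(u,u),u\rangle=0$ and with the $|u|^4$ coefficient expressible through the second fundamental form and intrinsic curvature at $x$; and $f(\exp_x u)=f(x)+\nabla f(x)\cdot u+\tfrac12\mathrm{Hess}\,f(x)[u,u]+R_3(u)$, where the cubic term plus remainder is dominated by $\|D^{(4)}f\|_\infty|u|^4$ on the relevant scale.

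Then I would factor the kernel as $g(\|x-y\|^2/\epsilon)=(4\pi)^{-d/2}e^{-|u|^2/(4\epsilon)}\exp\!\big(-\tfrac{\|x-y\|^2-|u|^2}{4\epsilon}\big)$ and expand the second exponential, whose exponent is $O(|u|^4/\epsilon)=O(\epsilon)$ on the effective scale $|u|\sim\sqrt\epsilon$: $\exp(-\tfrac{\|x-y\|^2-|u|^2}{4\epsilon})=1-\tfrac{\|x-y\|^2-|u|^2}{4\epsilon}+O(|u|^8/\epsilon^2)$. Multiplying together the expansions of the kernel correction, the volume element, and $f$, and rescaling $u=\sqrt\epsilon v$ so that $\epsilon^{-d/2}e^{-|u|^2/(4\epsilon)}du$ becomes the normalized isotropic Gaussian $(4\pi)^{-d/2}e^{-|v|^2/4}dv$, all odd monomials in $v$ integrate to zero by parity, and the surviving moments ($\int$ of $v_iv_j$ against the normalized Gaussian equals $2\delta_{ij}$) collapse the order-$\epsilon$ contributions into $f(x)+\epsilon(\Delta_\calM f+\omega f)(x)$: the $\tfrac12\mathrm{Hess}\,f[u,u]$ term produces $\epsilon\,\mathrm{tr}\,\mathrm{Hess}\,f(x)=\epsilon\,\Delta_\calM f(x)$ (the trace of the Hessian in an orthonormal frame at the base point is the Laplace--Beltrami operator), while the $O(|u|^2)$ volume-form correction and the $O(|u|^4)/\epsilon$ chord-length correction each multiply $f(x)$ and together yield $\epsilon\,\omega(x)f(x)$ with $\omega(x)$ assembled from the second fundamental form and curvature at $x$, i.e.\ from local derivatives of the extrinsic coordinates.

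Finally I would bound the remainder $r_{f,\epsilon}(x)$: after the rescaling, every discarded term is a Gaussian integral of $|v|^k$ with $k\ge4$ carrying a factor $\epsilon^2$ times either a purely geometric constant (bounded uniformly in $x$ since $\calM$ is compact, hence of bounded reach and bounded curvature and curvature-derivatives) or a derivative of $f$ of order at most $4$; in particular the cubic Taylor remainder of $f$ contributes $\epsilon^{-d/2}\int e^{-|u|^2/(4\epsilon)}|u|^4\,du\cdot\|D^{(4)}f\|_\infty=O(\epsilon^2)\|D^{(4)}f\|_\infty$, which is exactly where the factor $\sum_{l=0}^4\|D^{(l)}f\|_\infty$ in $C(f)$ originates. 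Combined with the exponentially small localization error this gives $\sup_{x\in\calM}|r_{f,\epsilon}(x)|\le C(f)\epsilon^2$. I expect the main obstacle to be bookkeeping rather than any single hard estimate: one must carry all the $O(|u|^k)$ terms through three simultaneous expansions and verify that no order-$\epsilon$ term is omitted and every order-$\epsilon^{3/2}$ term cancels by parity; the one genuinely delicate point is the \emph{uniformity} in $x$, which is precisely what compactness and bounded geometry buy, since all the Taylor coefficients (volume density, second fundamental form, curvature) are then bounded independently of the base point.
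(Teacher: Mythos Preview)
Your sketch is correct and is the standard normal-coordinates argument underlying this result. The paper, however, does not reprove the lemma from scratch: its ``proof'' simply observes that Lemma~A.5 of \cite{cheng2021convergence} already establishes the expansion for any radial kernel $h$ with $C^2$ regularity and sub-exponential decay, in the form
\[
\epsilon^{-d/2}\int_{\calM} h\!\left(\frac{\|x-y\|^2}{\epsilon}\right) f(y)\,dV(y)
= m_0[h]\,f(x) + \epsilon\,\frac{m_2[h]}{2}\bigl(\omega f + \Delta_\calM f\bigr)(x) + O(\epsilon^2),
\]
and then verifies that the Gaussian profile $g$ has moments $m_0[g]=1$ and $m_2[g]=2$, which gives exactly the stated coefficients. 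So the paper's route is a one-line specialization of a black-box citation, whereas you have written out the localization $+$ exponential-map $+$ Gaussian-moment computation that constitutes the actual proof of that black box. Both arrive at the same place; your version is self-contained and makes the origin of $\omega$ and the $C(f)$ bound explicit, while the paper's version is shorter but opaque unless one consults the cited source. One small wording issue: the cubic term of $f$ is controlled by $\|D^{(3)}f\|_\infty|u|^3$, not $\|D^{(4)}f\|_\infty|u|^4$; what you should say is that the cubic term integrates to zero by parity against the leading Gaussian and all even correction factors, and the \emph{quartic} Taylor remainder is what produces the $\|D^{(4)}f\|_\infty\epsilon^2$ contribution.
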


The proof of Lemma \ref{lemma:h-integral-diffusionmap} is left to Appendix \ref{app:more-prooofs}. We now prove Lemma \ref{lemma:construct-bar-eta}.

 \begin{proof}[Proof of Lemma \ref{lemma:construct-bar-eta}]
 
 To prove (i), we construct the $C^4$ function $r$ in the following way.
 Let $\omega(x)$ be defined as in Lemma \ref{lemma:h-integral-diffusionmap},
 which is a $C^\infty$ function on $\calM$.  
 We define
 \begin{equation}\label{eq:def-r-q-epsilon}
 r := -\frac{1}{2} \left( \omega + \frac{\Delta (p^{1/2})}{p^{1/2}} \right), 
 \end{equation}
 and by that $\omega \in C^{\infty}(\calM)$ and $p \in C^6(\calM)$, we have that $r \in C^4(\calM)$. 
 By compactness and smoothness of $\calM$, $\| r \|_\infty$ is finite and is determined by $\omega$ and $p$. 
To prove the uniform boundedness of $q_\epsilon$,
observe that $\min_{x \in \calM} p^{-1/2}(x) = p_{\rm max}^{-1/2} > 0$.
This allows us to set $q_{\rm min} := 0.5 p_{\rm max}^{-1/2} > 0$ as follows:
As long as $1+\epsilon r(x) \ge 0.5$ uniformly for all $x$, we have $q_\epsilon (x) \ge 0.5 p^{-1/2}(x) \ge q_{\rm min}$.
The former can be fulfilled as long as $\epsilon \|r\|_\infty \le 0.5$, which can be guaranteed by letting  $\epsilon_0 = 0.5/ (1+\|r\|_\infty)$.
(Note that in the argument by making $\epsilon_0$ small, one can make $q_{\rm min} = c p_{\rm max}^{-1/2}$ for any $ 0<c< 1$, and here we take $c=0.5$ as an example.)
For the uniform upper bound of $q_\epsilon$, note that for any $x \in \calM$, 
$p^{-1/2}(x) \le p_{\rm min}^{-1/2}$ and $|1 + \epsilon r(x) | \le 1 +\epsilon \| r\|_\infty$, 
and thus $q_{\rm max} :=  p_{\rm min}^{-1/2}(1+ \epsilon_0 \| r\|_\infty) \le 1.5 p_{\rm min}^{-1/2} $  is a valid choice. 
This proves (i). 

 To prove (ii), we need to show that under the stated condition, setting $\bar{\eta}_i = q_\epsilon(x_i)$ satisfies that 
 \begin{equation}\label{eq:sum-W-bareta-1}
\bar{\eta}_i  \sum_{j=1}^n W_{ij}  \bar{\eta}_j  = 1 + e_i, 
\quad |e_i| \le \varepsilon_{\rm SK} = O \left( \epsilon^2, \sqrt{\frac{\log n }{n \epsilon^{d/2}}} \right),  
\quad \forall i = 1, \cdots, n.
 \end{equation}
For each $i$, the summation in \eqref{eq:sum-W-bareta-1} excluding the $j=i$ term is an independent sum across $j$ (conditioning on sample $x_i$). 
We will prove the $\varepsilon_{\rm SK}$-bound of $|e_i|$ for each $i$ by concentration argument and apply the union bound over $i=1,\cdots, n$.

For each $i$,  we have
\begin{align}
\bar{\eta}_i  \sum_{j=1}^n W_{ij}  \bar{\eta}_j
&= \bar{\eta}_i^2 W_{ii} 
	+ \bar{\eta}_i \sum_{j, j\neq i} W_{ij}  \bar{\eta}_j  \nonumber \\
&=  \frac{ (4\pi \epsilon)^{-d/2}}{n} q_\epsilon(x_i)^2 
	+  (1-\frac{1}{n}) q_\epsilon(x_i) \frac{1}{n-1}   \sum_{j, j \neq i}  
	\epsilon^{-d/2} g \left(  \frac{ \| x_i - x_j\|^2}{\epsilon} \right)q_\epsilon(x_j) \nonumber \\
&:= \textcircled{1} + 	 (1-\frac{1}{n})  \textcircled{2}. \label{eq:bar-eta-i-1}
\end{align}
For $\textcircled{1}$, by the uniform upper bound of $q_\epsilon$ in (i), 
$|  \textcircled{1} | \le q_{\rm max}^2 ( (4\pi \epsilon)^{d/2}  n)^{-1}$.
Since we have $\epsilon^{d/2} = \Omega(\log n/n)$, $(\epsilon^{d/2}n)^{-1} = o(1)$, and then
$(\epsilon^{d/2}  n)^{-1} = o( \sqrt{ \log n / \epsilon^{d/2}  n} )$. 
This means what we always have
\begin{equation}\label{eq:bound-1-bar-eta-1}
|  \textcircled{1} | = O( (\epsilon^{d/2}  n)^{-1} ) = o\left( \sqrt{\frac{\log n }{n \epsilon^{d/2}}} \right).
\end{equation}
For $\textcircled{2}$, 
\[
\textcircled{2} = q_\epsilon(x_i)   \frac{1}{n-1}   \sum_{j, j \neq i}  Y_j,
\quad Y_j:= \epsilon^{-d/2} g \left(  \frac{ \| x_i - x_j\|^2}{\epsilon} \right)q_\epsilon(x_j),
\]
and $\{Y_j, j\neq i\}$ are $(n-1)$ independent random variables conditioning on $x_i$. 
By definition, 
\begin{align*}
\E Y_j 
& =  \epsilon^{- {d}/{2}} \int_{\calM} g \left( \frac{ \| x-y \|^2}{\epsilon} \right) q_\epsilon(y) p(y) dV(y) \\
& = q_\epsilon p(x_i) + \epsilon ( w q_\epsilon p+ \Delta (q_\epsilon p ))(x_i) + O(\epsilon^2),
\end{align*}
where we apply Lemma \ref{lemma:h-integral-diffusionmap} with $f = q_\epsilon p $ which is $C^4$ on $\calM$,
and the constant in the $O(\epsilon^2)$ term is uniform for all $x$.
Recall that $q_\epsilon = p^{-1/2}(1+\epsilon r)$, 
we have
\begin{align*}
& q_\epsilon p + \epsilon( \omega  q_\epsilon p + \Delta (q_\epsilon p))
= p^{1/2} (1 + \epsilon r) + \epsilon( \omega p^{1/2} (1 + \epsilon r) +  \Delta( p^{1/2} (1 + \epsilon r)) )\\
&~~~
 = p^{1/2}  + \epsilon \left( p^{1/2} r +  \omega p^{1/2} +   \Delta( p^{1/2}) \right) + O(\epsilon^2),
\end{align*}
where the uniform constant in $O(\epsilon^2)$ is by the uniform boundedness of all involved functions on $\calM$. 
This gives that, omitting the evaluation at $x_i$ in the r.h.s. of the following equation,
\begin{align*}
q_\epsilon(x_i) \E Y_j 
& = p^{-1/2} (1+\epsilon r)  \left( p^{1/2}  + \epsilon \left( p^{1/2} r +  \omega p^{1/2} +   \Delta( p^{1/2}) \right) + O(\epsilon^2)  \right) \\
& = 
 1 + \epsilon \left( 2 r +  \omega  +  p^{-1/2} \Delta( p^{1/2}) \right) + O(\epsilon^2). 
\end{align*}
 By the definition of $r$ as  in \eqref{eq:def-r-q-epsilon},
$
2 r +  \omega  +  p^{-1/2} \Delta( p^{1/2}) = 0$,
and this proves that 
\begin{equation}\label{eq:bound-EY-circle2-1}
q_\epsilon(x_i) \E Y_j  
= \epsilon^{- {d}/{2}} \int_{\calM}  q_\epsilon(x_i) g \left( \frac{ \| x_i-y \|^2}{\epsilon} \right) q_\epsilon(y) p(y) dV(y) 
= 1 + O(\epsilon^2),
\end{equation}
where the constant in the $O(\epsilon^2)$ term is uniform for all $x_i$.

To bound the deviation of $\frac{1}{n-1}   \sum_{j, j \neq i}  Y_j$ from $\E Y_j$, 
note that $|Y_j| \le L = \Theta( \epsilon^{-d/2})$,
and variance of $Y_j$ is bounded by 
\[
\E Y_j^2 = \epsilon^{- {d}} \int_{\calM} g^2 \left( \frac{ \| x-y \|^2}{\epsilon} \right) q_\epsilon^2(y) p(y) dV(y)
\le \nu = \Theta(\epsilon^{-d/2}).
\]
Then classical Bernstein inequality gives that (by conditioning on $x_i$ first and then over the randomness of all $n$ samples)
when $n$ is sufficiently large, w.p.$> 1- 2n^{-10}$, 
\begin{equation}\label{eq:bound-conc-circle2-1}
\left| \frac{1}{n-1}   \sum_{j, j \neq i}  Y_j - \E Y_j \right|
\le \Theta \left(  \sqrt{ \frac{\log n}{ n } \nu} \right) 
= O \left(  \sqrt{ \frac{\log n}{ n  \epsilon^{d/2}} } \right),
\end{equation}
and we call this the good event $E_i$,  for each $i$. 
(The sub-Gaussian tail in Bernstein inequality requires $  \sqrt{\nu  {\log n}/{ n } } < C \nu / L$ for some absolute constant $C$,  
and this holds for large $n$ due to that $\epsilon^{d/2} = \Omega(\log n /n)$.)
Under $E_i$, \eqref{eq:bound-conc-circle2-1} together with \eqref{eq:bound-EY-circle2-1} implies that 
\[
\textcircled{2} = q_\epsilon(x_i)  \left( \E Y_j + O \left(  \sqrt{ \frac{\log n}{ n  \epsilon^{d/2}} } \right) \right)
= 1 + O \left(  \epsilon^2,  \sqrt{ \frac{\log n}{ n  \epsilon^{d/2}} } \right),
\]
and so is $(1-\frac{1}{n})\textcircled{2}$ because $O(n^{-1})$ is an higher order term. 

Back to \eqref{eq:bar-eta-i-1}, we have that under good event $E_i$ which happens w.p.$>1- 2 n^{-10}$,  
 \[
 \bar{\eta}_i  \sum_{j=1}^n W_{ij}  \bar{\eta}_j 
 =  o\left( \sqrt{\frac{\log n }{n \epsilon^{d/2}}} \right) + 1 + O \left(  \epsilon^2,  \sqrt{ \frac{\log n}{ n  \epsilon^{d/2}} } \right),
 \]
 by combining \eqref{eq:bound-1-bar-eta-1} and the bound of $(1-\frac{1}{n})\textcircled{2}$. 
 This proves \eqref{eq:sum-W-bareta-1} w.p.$> 1-2n^{-9}$ by a union bound. 
 \end{proof}

\subsubsection{Proof of Lemma \ref{lemma:eta1-eta2} with extension and Lemma \ref{lemma:C1-implies-C2}}

\begin{proof}[Proof of Lemma \ref{lemma:eta1-eta2}]
Let $A_i = D_{\eta_i} A D_{\eta_i}$, and denote $e_i=e(\eta_i)$, $i=1,2$. 
By Definition \ref{def:eps-approx-scaling}, $\| e_i\|_\infty \le \varepsilon_{\rm SK}$ for both $i=1,2$.
Recall that $\eta_2 = \eta_1 \odot ({\bf 1} + u)$, we have
\begin{align*}
A_2 {\bf 1} 
&= D_{{\bf 1} + u } A_1 D_{{\bf 1} + u } {\bf 1}  = (I + D_u) A_1 ({\bf 1} + u) \\
&
= A_1 ({\bf 1} + u)  + D_u A_1 ({\bf 1} + u)  \\
&
= {\bf 1} + e_1 + A_1 u  + D_u A_1 ({\bf 1} + u),
\end{align*}
and since  $A_2 {\bf 1}  = 1+ e_2$, this gives
\begin{equation}\label{eq:proof-e1-e2-1}
e_2 - e_1 =  A_1 u  + D_u A_1 ({\bf 1} + u).
\end{equation}
Multiply $u^T$ to both sides of \eqref{eq:proof-e1-e2-1}, and use the fact that $A_1  \succeq 0$  
(because $A  \succeq 0$ and $A_1 = D_{\eta_1} A  D_{\eta_1} $), we have
\begin{align}
u^T(e_2 - e_1) 
&\ge u^TD_u A_1 ({\bf 1} + u)  \nonumber \\ 
& = \sum_{i} u_i^2  (\eta_1)_i \sum_j  A_{ij} (\eta_1)_j (1+u_j) \nonumber \\
& = \sum_{i} u_i^2  \frac{(\eta_1)_i}{(\eta_2)_i}  \left( \sum_j  (\eta_2)_iA_{ij} (\eta_2)_j \right). 
\label{eq:proof-e1-e2-2}
\end{align}
Since $ \sum_j  (\eta_2)_iA_{ij} (\eta_2)_j =  \sum_j  (A_2)_{ij} = 1+ (e_2)_i $, 
and $|(e_2)_i | \le \| e_2\|_\infty \le \varepsilon_{\rm SK} < 0.1$,
we have 
\[
\sum_j  (\eta_2)_iA_{ij} (\eta_2)_j \ge 1-\varepsilon_{\rm SK} > 0.9, \quad i=1,\cdots,n.
\] 
Meanwhile, by the boundedness condition \eqref{eq:cond-C1-C2-bdd},
$ {(\eta_1)_i}/{(\eta_2)_i} \ge C_1/C_2$ for all $i$. Putting back to \eqref{eq:proof-e1-e2-2}, this gives that
\begin{equation}\label{eq:bound-uDuA1(1+u)}
u^T(e_2 - e_1) 
\ge  \frac{C_1}{C_2} 0.9  \sum_{i} u_i^2, 
\end{equation}
and then
\[
0.9 \frac{C_1}{C_2}  \| u \|_2^2 \le |u^T(e_2 - e_1) | \le \|u\|_2 ( \| e_2\|_2 + \|e_1\|_2 ).
\]
By that $ 0.9 \frac{C_1}{C_2} $ is strictly positive, this proves the lemma.
\end{proof}
\begin{lemma}[Extension of Lemma \ref{lemma:eta1-eta2} to non-PSD $A$]
\label{lemma:eta1-eta2-nonPSD}
Suppose $A$ is symmetric, 
$\eta_1, \eta_2 \in \R_+^n$ are two $\varepsilon_{\rm SK}$-approximate bi-stochastic scaling factor of $A$, $\varepsilon_{\rm SK} < 0.1$, 
and  there exist $C_1, C_2 > 0$ such that
\begin{equation}\label{eq:cond-C1-C2-bdd}
\min_i (\eta_1)_i \ge  C_1,
\quad
\max_i (\eta_1)_i ,\, \max_i (\eta_2)_i \le C_2.
\end{equation}
There is $\delta > 0$ and $\delta < 0.4 C_1/C_2^3$ such that $A + \delta I  \succeq 0$.
Let $D_{{\eta_i}} A D_{{\eta_i}} {\bf 1} = {\bf 1} + e(\eta_i)$ for $i=1,2$, and
define $u \in \R^n$ by $\eta_2 = \eta_1 \odot ({\bf 1} + u)$,  then
\[
\| u \|_2 \le 
\frac{1}{0.5 } 
\frac{C_2}{C_1} (\| e(\eta_1)\|_2 + \|e(\eta_2)\|_2 ).
\]
\end{lemma}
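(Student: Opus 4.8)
The plan is to reuse the proof of Lemma \ref{lemma:eta1-eta2} almost verbatim, replacing the one place where positive-semidefiniteness of $A$ was invoked by the shift $A \mapsto A+\delta I$. Concretely, set $A_i = D_{\eta_i} A D_{\eta_i}$ and $e_i = e(\eta_i)$, so $\|e_i\|_\infty \le \varepsilon$ for $i=1,2$. Writing $\eta_2 = \eta_1 \odot ({\bf 1}+u)$ and expanding $A_2 {\bf 1} = (I + D_u) A_1 ({\bf 1}+u)$ exactly as before yields the identity
\[
e_2 - e_1 = A_1 u + D_u A_1({\bf 1}+u).
\]
I would then multiply on the left by $u^T$ and estimate the two resulting terms separately.

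For the ``diagonal'' term $u^T D_u A_1({\bf 1}+u)$ I would \emph{not} split $A$: exactly as in the proof of Lemma \ref{lemma:eta1-eta2},
\[
u^T D_u A_1({\bf 1}+u) = \sum_i u_i^2 \frac{(\eta_1)_i}{(\eta_2)_i}\Big(\sum_j (\eta_2)_i A_{ij}(\eta_2)_j\Big) = \sum_i u_i^2 \frac{(\eta_1)_i}{(\eta_2)_i}\big(1 + (e_2)_i\big),
\]
and since $1 + (e_2)_i \ge 1-\varepsilon > 0.9$ and $(\eta_1)_i/(\eta_2)_i \ge C_1/C_2$ by \eqref{eq:cond-C1-C2-bdd}, this term is at least $0.9\,(C_1/C_2)\|u\|_2^2$. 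For the term $u^T A_1 u$, which is where $A \succeq 0$ was used before, I would instead write $A = (A+\delta I) - \delta I$, so that $A_1 = D_{\eta_1}(A+\delta I)D_{\eta_1} - \delta D_{\eta_1}^2$; since $A + \delta I \succeq 0$ the first piece is PSD, hence
\[
u^T A_1 u \ge -\delta\, u^T D_{\eta_1}^2 u = -\delta \sum_i u_i^2 (\eta_1)_i^2 \ge -\delta C_2^2 \|u\|_2^2,
\]
using the upper bound $\max_i (\eta_1)_i \le C_2$ from \eqref{eq:cond-C1-C2-bdd} — this is the single place the extra hypothesis $\max_i(\eta_1)_i \le C_2$, absent from Lemma \ref{lemma:eta1-eta2}, is needed.

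Combining the two estimates gives $u^T(e_2 - e_1) \ge \big(0.9\,C_1/C_2 - \delta C_2^2\big)\|u\|_2^2$, and the quantitative assumption $\delta < 0.4\,C_1/C_2^3$ forces $\delta C_2^2 < 0.4\,C_1/C_2$, so the coefficient is strictly larger than $0.5\,C_1/C_2$. On the other hand $u^T(e_2-e_1) \le \|u\|_2\|e_2-e_1\|_2 \le \|u\|_2(\|e_1\|_2 + \|e_2\|_2)$, so (the case $u=0$ being trivial) dividing through by $0.5\,(C_1/C_2)\|u\|_2$ produces the claimed bound. I do not anticipate a real obstacle: the only subtlety is to keep the exact row sum $1+(e_2)_i$ in the diagonal term rather than also splitting $A$ there, so that the shift contributes a single error term of size $\delta C_2^2$ rather than two — which is exactly why $0.4\,C_1/C_2^3$ is the right threshold for $\delta$.
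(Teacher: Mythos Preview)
Your proposal is correct and matches the paper's proof essentially verbatim: the paper also splits $A = (A+\delta I) - \delta I$ only in the quadratic term $u^T A_1 u$, bounds the resulting correction by $\delta C_2^2\|u\|_2^2$ using $\max_i(\eta_1)_i \le C_2$, keeps the diagonal term $u^T D_u A_1({\bf 1}+u) \ge 0.9(C_1/C_2)\|u\|_2^2$ unchanged, and then invokes $\delta < 0.4\,C_1/C_2^3$ to get the coefficient $0.5\,C_1/C_2$. The only cosmetic difference is that the paper inserts the shift into the identity for $e_2-e_1$ before multiplying by $u^T$, whereas you multiply first and split afterward; the resulting inequalities are identical.
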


\begin{proof}[Proof of the Lemma \ref{lemma:eta1-eta2-nonPSD}]
The proof follows a similar argument as the proof of Lemma \ref{lemma:eta1-eta2}, with modification of certain steps. 
Specifically,  similarly as in  \eqref{eq:proof-e1-e2-1}, we have
\begin{equation*}
e_2 - e_1 =  D_{\eta_1} (A +\delta I)D_{\eta_1} u  - \delta  D_{\eta_1} D_{\eta_1} u  + D_u A_1 ({\bf 1} + u).
\end{equation*}
Then, using that $A +\delta I$ is PSD, multiplying $u^T$ to both sides gives  that
\begin{equation}\label{eq:uT(e2-e1)-2}
u^T (e_2 - e_1 ) 
 \ge - \delta \| u \odot {\eta_1} \|_2^2  + u^T D_u A_1 ({\bf 1} + u).
\end{equation}
The second term can be bounded in the same way as in the derivation from \eqref{eq:proof-e1-e2-2} to \eqref{eq:bound-uDuA1(1+u)}, 
which gives that 
\[
u^TD_u A_1 ({\bf 1} + u) 
= \sum_{i} u_i^2  \frac{(\eta_1)_i}{(\eta_2)_i}  \left( \sum_j  (\eta_2)_iA_{ij} (\eta_2)_j \right)
\ge  \frac{C_1}{C_2} 0.9  \| u \|_2^2.
\]
To bound the first term, we use that $\max_i (\eta_1)_i \le C_2$, which gives  that 
\[
\| u \odot {\eta_1} \|_2^2 \le C_2^2 \| u \|_2^2.
\]
Putting together back to \eqref{eq:uT(e2-e1)-2}, we have that 
\begin{align}
u^T (e_2 - e_1 ) 
& \ge - \delta C_2^2 \| u\|_2^2 + 0.9 \frac{C_1}{C_2} \|u\|_2^2 
 \ge  0.5 \frac{C_1}{C_2} \|u\|_2^2,
\end{align} 
where in the last inequality we used that $\delta < 0.4 C_1/C_2^3$.
By that $ | u^T (e_2 - e_1 )|  \le \| u \|_2 ( \| e_2\|_2 + \| e_1  \|_2)$ and $0.5 C_1/C_2 > 0$,
this finishes the proof. 
\end{proof}
\begin{proof}[Proof of Lemma \ref{lemma:C1-implies-C2}]
By definition $D_\eta A D_\eta = 1 + e(\eta)$,  $\|e(\eta)\|_\infty \le \varepsilon_{\rm SK}$, 
and  then 
\[
\sum_j \eta_i A_{ij} \eta_j = 1+ e(\eta)_i \le 1+ \varepsilon_{\rm SK}, \quad \forall i.
\]
Since $A_{ij } \ge 0$, $\eta_j \ge C_1 >0$, we have that $\sum_j  A_{ij} \eta_j \ge C_1 \sum_j  A_{ij} \ge C_1 C_3$. 
This gives that for any $i$,
\[
\eta_i (C_1 C_3) \le 1+\varepsilon_{\rm SK},
\]
which proves the lemma.
\end{proof}

\subsection{Proofs in Section \ref{sec:theory-clean-data}}

\subsubsection{Proof of Proposition \ref{prop:barLn-pointwise}}
\begin{proof}[Proof of Proposition \ref{prop:barLn-pointwise}]
By definition,  for each $i = 1,\cdots,n $, 
 \[
 -\left( \bar{L}_n \rho_X f \right)_i  
=  \sum_{j=1}^n \bar{W}_{ij}(f(x_j) -f(x_i))  
= (1-\frac{1}{n}) \textcircled{1},
\]
where 
\begin{align}\label{eq:pf-barL-circ1}
\textcircled{1}  = q_\epsilon(x_i ) \frac{1}{n-1}\sum_{j, j\neq i}  Y_j, 
\quad
Y_j : =   \epsilon^{-d/2} 
	g \left( \frac{ \| x_i - x_j \|^2}{\epsilon} \right)   q_\epsilon(x_j ) (f(x_j)-f(x_i)).
\end{align}
Conditioning on $x_i$, $Y_j$ are $(n-1)$ independent random variables. 
Define 
\[
G_\epsilon f (x) : = \epsilon^{- {d}/{2}} \int_{\calM} g \left( \frac{ \| x-y \|^2}{\epsilon} \right) f(y) dV(y),
\]
then 
\[
\E Y_j =  G_\epsilon (q_\epsilon f p )(x_i) -  f(x_i) G_\epsilon (q_\epsilon p )(x_i).
\]
Since $q_\epsilon$ is $C^4$ (by Lemma \ref{lemma:construct-bar-eta}), 
$p $ is $C^6$ (by (A2)),
and $f$ is $C^4$ by the assumption of the proposition, 
then $q_\epsilon f p$ and $q_\epsilon p$ are both $C^4$.
Applying Lemma \ref{lemma:h-integral-diffusionmap} then gives that  (omitting evaluating at $x_i$ in the notation)
 \begin{align*}
G_\epsilon (q_\epsilon f p )  -  f G_\epsilon (q_\epsilon p )
& = q_\epsilon f p + \epsilon( \omega q_\epsilon f p + \Delta(q_\epsilon f p)) + O(\epsilon^2) \\
& ~~~ 
-  f \left(  q_\epsilon p + \epsilon( \omega q_\epsilon p + \Delta(q_\epsilon p)) + O(\epsilon^2)    \right) \\
& = \epsilon( \Delta(q_\epsilon f p) - f \Delta(q_\epsilon  p) ) + O( \epsilon^2).
\end{align*}
Recall that $q_\epsilon = p^{-1/2} (1+ \epsilon r) $, 
then
\[
\Delta(q_\epsilon f p) - f \Delta(q_\epsilon  p)  
= q_\epsilon p \Delta f + 2 \nabla (q_\epsilon p) \cdot \nabla f
= p^{1/2 } \Delta f + 2 \nabla (p^{1/2}) \cdot \nabla f + O(\epsilon).
\]
This gives that
\[
\E Y_j = \epsilon ( p^{1/2 } \Delta f + 2 \nabla (p^{1/2}) \cdot \nabla f  )(x_i)  + O(\epsilon^2),
\]
and thus
\begin{equation}\label{eq:qeps-EYj-1}
q_\epsilon(x_i) \E Y_j 
=    \epsilon \left( \Delta f + 2 p^{-1/2} \nabla (p^{1/2}) \cdot \nabla f  \right)(x_i)   + O(\epsilon^2)
= \epsilon  \Delta_p f  (x_i)   + O(\epsilon^2)
\end{equation}
By uniform boundedness of all the involved functions, the constant in $O(\epsilon^2)$ term is uniform for all $x_i$.
To analyze the concentration of the summation $\frac{1}{n-1}\sum_{j, j\neq i}  Y_j$ at $\E Y_j$, 
we bound the magnitude and variance of $Y_j$.
By the exponential decay of the Gaussian kernel $g$,
it incurs $O(\epsilon^{10})$ error by restricting the summation to $x_j \in B_\delta(x_i) \cap \calM$,
$\delta \sim \sqrt{ \epsilon \log (1/\epsilon)  }$.
This is equivalent to replace $Y_j$ with
\[
\tilde{Y}_j: =  \epsilon^{-d/2} 
	g \left( \frac{ \| x_i - x_j \|^2}{\epsilon} \right)   q_\epsilon(x_j ) (f(x_j)-f(x_i)) {\bf 1}_{ \{  \|x_j - x_i \|_2 < \delta \}}.
\]
By that $f\in C^4(\calM)$ and thus Lipschitz continuous (with respect to geodesic distance on local $B_\delta(x_i)$ neighborhood) on the manifold, 
and that locally geodesic metric and ambient space Euclidean metric matches up to 3rd order so that geodesic distance is bounded by Euclidean distance up to a constant factor (see e.g. Lemma 6 in \cite{coifman2006diffusion}),
we have that
\begin{equation}\label{eq:truncate-fj-fi-bound}
| (f(x_j)-f(x_i)) {\bf 1}_{ \{  \|x_j - x_i \|_2 < \delta \}} | \le \Theta(\delta) = O(   \sqrt{ \epsilon \log (1/\epsilon)  }).
\end{equation}
This leads to that  $|\tilde{Y}_j| \le L = \Theta( \epsilon^{-d/2}\sqrt{ \epsilon \log (1/\epsilon) } )$.
Meanwhile, 
$\E {Y}_j^2 \le \nu = \Theta( \epsilon^{-d/2+1})$.
By Classical Bernstein, attempting at a deviation  $t= \sqrt{ 40 \nu \log n /n} $, we obtain $e^{- nt^2/(4\nu)} = n^{-10}$ tail probability as long as $t < 3\nu / L $.
This is satisfied if  $\sqrt{ \nu \log n /n}  < C \nu /L $, where $C := 3/\sqrt{40}$, and is equivalently
\begin{equation}\label{eq:condition-Bernstein}
\frac{ \epsilon^{d/2} }{\log (1/\epsilon) }> C' \frac{\log n  }{n }
\end{equation}
for some $O(1)$ constant $C'$, which is satisfied under the condition of the proposition that $\epsilon^{d/2+1} \gg \log n/n$.
This gives that with large $n$ and w.p.$>1-2 n^{-10}$,
\[
\frac{1}{n-1}\sum_{j, j\neq i}  Y_j = \E Y_j + O\left(  \sqrt{ \frac{\log n}{n  \epsilon^{d/2-1}}} \right),
\]
and we call this good event $E_i$.
Back to \eqref{eq:pf-barL-circ1}, this together with \eqref{eq:qeps-EYj-1} gives 
\[
\textcircled{1} = q_\epsilon(x_i) \E Y_j + O\left(  \sqrt{ \frac{\log n}{n  \epsilon^{d/2-1}}} \right)
= \epsilon  \Delta_p f  (x_i)   
+ O\left(\epsilon^2,  \sqrt{ \frac{\log n}{n  \epsilon^{d/2-1}}} \right).
\]
This proves that, under good event $E_i$,
\[
\left( - \frac{1}{\epsilon}\bar{L}_n \rho_X f \right)_i  = (1-\frac{1}{n}) \frac{ \textcircled{1}}{\epsilon}
=  \Delta_p f  (x_i)   
+ O\left(\epsilon,  \sqrt{ \frac{\log n}{n  \epsilon^{d/2+1}}} \right),
\]
noting that the $ { \textcircled{1}}/{ (n \epsilon)}$ term leads to an error term of $O(n^{-1})$ which is of higher order than $ \sqrt{ {\log n}/{(n  \epsilon^{d/2+1} )}} $ since $\epsilon = o(1)$. 
Combining the $n$ good events of $E_i$ for $i=1,\cdots,n$ proves \eqref{eq:eqn-ptwise-barL} with the declared high probability. 
\end{proof}

\subsubsection{Proof of Theorem \ref{thm:hatLn-2norm} with extension}
 \begin{proof}[Proof of Theorem \ref{thm:hatLn-2norm}]
 Under the condition of the theorem, Proposition \ref{prop:barLn-pointwise} holds and gives that under a good event $E_{(0)}$ which happens w.p. $> 1- 2n^{-9}$,
  \begin{equation}\label{eq:bound-barL-pt-2}
\frac{1}{\sqrt{n}}\| \left( - \frac{1}{\epsilon} \bar{L}_n \rho_X f  \right) -  \rho_X ( \Delta_p f )   \|_2 
= O \left( \epsilon, \sqrt{ \frac{\log n }{n \epsilon^{d/2+1}}}\right),
\end{equation}
by that $\| v\|_2 \le \sqrt{n} \| v \|_\infty$ for any $v \in \R^n$.
To finish the proof, it remains to bound 
\begin{equation}\label{eq:bound-barL-hatL-to-show}
\| ( \hat{L}_n -  \bar{L}_n ) \rho_X f  \|_2 \overset{?}{=} 
\sqrt{n} \epsilon  \cdot 
O\left(\epsilon, \sqrt{\frac{\log n \log (1/\epsilon) }{n \epsilon^{d/2+1}}} \right).
\end{equation}
Recall that 
\begin{align*}
-( \bar{L}_n \rho_X f  )_i & = \bar{\eta}_i \sum_{j=1}^n  {W}_{ij}   \bar{\eta}_j ( f(x_j) - f(x_i) ), \\
-( \hat{L}_n \rho_X f  )_i & = \hat{\eta}_i \sum_{j=1}^n  {W}_{ij}   \hat{\eta}_j ( f(x_j) - f(x_i) ),
\end{align*}
and thus 
 \begin{align}
( ( \hat{L}_n -  \bar{L}_n ) \rho_X f  )_i
& =  \bar{\eta}_i \sum_{j=1}^n  {W}_{ij}   \bar{\eta}_j ( f(x_j) - f(x_i) )
 	- \hat{\eta}_i \sum_{j=1}^n  {W}_{ij}   \hat{\eta}_j ( f(x_j) - f(x_i) )  \nonumber \\
& = 	(\bar{\eta}_i - \hat{\eta}_i ) \sum_{j=1}^n  {W}_{ij}   \bar{\eta}_j ( f(x_j) - f(x_i) )
	+ \hat{\eta}_i\sum_{j=1}^n  {W}_{ij} (  \bar{\eta}_j -  \hat{\eta}_j )( f(x_j) - f(x_i) ) \nonumber \\
& =: 	 \textcircled{1}_i +  \textcircled{2}_i, \label{eq:def-circle1-circle2}
 \end{align}
 where the vectors $\textcircled{1}, \textcircled{2} \in \R^n$ and $( \hat{L}_n -  \bar{L}_n ) \rho_X f  = \textcircled{1} + \textcircled{2} $.
In below, we bound $\| \textcircled{1}\|_2$ and $\| \textcircled{2}\|_2$  respectively.

To proceed, we use Lemma \ref{lemma:eta1-eta2} to bound the relative error vector of $\bar{\eta}$ and $\hat{\eta}$ in 2-norm.
Call the good event in Lemma \ref{lemma:construct-bar-eta} the event $E_{(1)}$, 
under which  $\bar{\eta}$ is a solution to the $(C_{\rm SK}, \varepsilon_{\rm SK})$-matrix scaling problem of $W$ with $(C_{\rm SK},\varepsilon_{\rm SK})$ as in \eqref{eq:C-eps-modified-SK} by that lemma. 
Meanwhile, $\hat{\eta}$ is also a solution to the $(C_{\rm SK}, \varepsilon_{\rm SK})$-problem guaranteed by the algorithm. 
This gives that 
\begin{equation}\label{eq:proof-C1}
\min_i \bar{\eta}_i, \, \min_i \hat{\eta}_i \ge C_1 = C_{\rm SK} > 0.
\end{equation}
To obtain $C_2$ needed by Lemma \ref{lemma:eta1-eta2}, we use the concentration of the degree $D_{ii}:=\sum_j W_{ij}$ uniformly for all $i$, which is given in the following lemma. 
The proof can be directly derived from Lemma 6.1 in \cite{cheng2021eigen}, and is included in Appendix \ref{app:more-prooofs} for completeness.

\begin{lemma}[Concentration of degree of $W$]
\label{lemma:degree-conc-W}
Under Assumptions (A1) (A2), 
suppose as $n \to \infty$, $\epsilon \to 0+ $, $\epsilon^{d/2} = \Omega( {\log n}/{n} ) $,
then when $n$ is large enough, w.p. $> 1- 2 n^{-9}$, 
\begin{equation}\label{eq:degree-D-concen}
 \sum_{j=1}^n W_{ij}
= {p} (x_i) +  O \left( \epsilon, \sqrt{ \frac{\log n}{n \epsilon^{d/2}} }\right),
\quad 
 i = 1, \cdots, n.
\end{equation}
where the constant in big-O is uniform for all $i$ and depends on $p$ and $\calM$. 
\end{lemma}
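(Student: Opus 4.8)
The plan is to reuse, in its simplest form, the independent-sum concentration argument already carried out for Lemma~\ref{lemma:construct-bar-eta} and Proposition~\ref{prop:barLn-pointwise}. Fix an index $i$ and condition on the sample $x_i$. Separating the diagonal term from \eqref{eq:def-DW},
\[
\sum_{j=1}^n W_{ij} = W_{ii} + \frac{n-1}{n}\cdot\frac{1}{n-1}\sum_{j\neq i} Y_j,
\qquad
Y_j := \epsilon^{-d/2}\, g\!\left( \frac{\|x_i - x_j\|^2}{\epsilon} \right),
\]
where the $Y_j$, $j\neq i$, are i.i.d.\ conditionally on $x_i$. By \eqref{eq:def-g-gaussian} the diagonal term equals $(4\pi\epsilon)^{-d/2}/n = O((n\epsilon^{d/2})^{-1})$, which under $\epsilon^{d/2} = \Omega(\log n/n)$ is $o(\sqrt{\log n/(n\epsilon^{d/2})})$ and hence absorbed into the claimed error; the $(n-1)/n$ prefactor likewise contributes only a further $O(n^{-1})$ of the same type.

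For the mean, $\E[Y_j\mid x_i] = \epsilon^{-d/2}\int_{\calM} g(\|x_i-y\|^2/\epsilon)\,p(y)\,dV(y)$, and applying Lemma~\ref{lemma:h-integral-diffusionmap} with the test function $f = p$ (which lies in $C^6(\calM)\subset C^4(\calM)$ by (A2)) gives $\E[Y_j\mid x_i] = p(x_i) + \epsilon(\omega p + \Delta p)(x_i) + O(\epsilon^2) = p(x_i) + O(\epsilon)$, the constant being uniform over $x_i\in\calM$ by the compactness of $\calM$ and the uniform boundedness of $p$, $\omega$ and their derivatives. For the fluctuation, the Gaussian decay of $g$ and (A1) give $|Y_j|\le L = \Theta(\epsilon^{-d/2})$ and $\E[Y_j^2\mid x_i] = \epsilon^{-d}\int_{\calM} g^2(\|x_i-y\|^2/\epsilon)\,p(y)\,dV(y) \le \nu = \Theta(\epsilon^{-d/2})$; note that, in contrast to Proposition~\ref{prop:barLn-pointwise}, there is no factor $f(x_j)-f(x_i)$ here, so no truncation to a ball $B_\delta(x_i)$ is needed and the variance proxy $\nu$ carries no extra $\log(1/\epsilon)$.

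A classical Bernstein inequality applied to $\frac{1}{n-1}\sum_{j\neq i}(Y_j - \E[Y_j\mid x_i])$, aimed at deviation $t = \Theta(\sqrt{\nu\log n/n})$, then yields a tail of $2n^{-10}$ provided the sub-Gaussian regime condition $\sqrt{\nu\log n/n} \le C\nu/L$ holds for a suitable absolute constant $C$, which reduces precisely to $\epsilon^{d/2} = \Omega(\log n/n)$. Thus, for each $i$, with probability $>1-2n^{-10}$ one has $\sum_{j}W_{ij} = p(x_i) + O(\epsilon, \sqrt{\log n/(n\epsilon^{d/2})})$ with a constant uniform in $i$; a union bound over $i=1,\dots,n$ upgrades this to probability $>1-2n^{-9}$ and gives \eqref{eq:degree-D-concen}. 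I do not expect any genuine obstacle: the only points needing care are verifying that the Bernstein regime condition is satisfied under the stated bandwidth scaling and that all $O(\cdot)$ constants are uniform over $i$, both inherited verbatim from the earlier proofs; alternatively, the statement follows directly from Lemma~6.1 of \cite{cheng2021eigen}.
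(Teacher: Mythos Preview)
Your proposal is correct and follows essentially the same route as the paper's own proof: separate the diagonal term, apply Lemma~\ref{lemma:h-integral-diffusionmap} with $f=p$ to compute $\E[Y_j\mid x_i]$, bound $|Y_j|$ and $\E[Y_j^2\mid x_i]$ by $\Theta(\epsilon^{-d/2})$, invoke Bernstein for a $2n^{-10}$ tail, and take a union bound over $i$. The paper also notes, as you do, that the result can be obtained directly from Lemma~6.1 of \cite{cheng2021eigen}.
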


Under the condition of the theorem, as $n$ increases, $\epsilon = o(1)$ and also $\epsilon \gg (\log n /n)^{1/(d/2+1)}$,
thus for large $n$,
the $O( \epsilon, \sqrt{ \log n/ (n \epsilon^{d/2}) })$ residual term in \eqref{eq:degree-D-concen}
 is $o(1)$ and suppose it is less than $0.1 p_{\rm min}$. 
Then under the good event in Lemma \ref{lemma:degree-conc-W}, called $E_{(2)}$ which happens w.p. $> 1-2n^{-9}$,
we have that 
\begin{equation}\label{eq:proof-C3}
D_{ii} = \sum_{j} W_{ij} \ge p(x_i) - 0.1 p_{\rm min} \ge 0.9 p_{\rm min} = C_3 > 0,
\quad i = 1, \cdots,n,
\end{equation}
that is, $0.9 p_{\rm min}$ serves as the constant $C_3$ needed by Lemma \ref{lemma:C1-implies-C2} for $A=W$.
Combined  with \eqref{eq:proof-C1}, 
Lemma \ref{lemma:C1-implies-C2} gives that, under $E_{(1)} \cap E_{(2)}$, we have (using $\varepsilon_{\rm SK} < 0.1$)
\begin{equation}\label{eq:proof-C2}
\max_i \bar{\eta}_i, \, \max_i \hat{\eta}_i \le  \frac{1+\varepsilon_{\rm SK}}{C_1 C_3} \le \frac{1.1}{  C_{\rm SK}  0.9 p_{\rm min }}
=   C_2, 
\end{equation}
and $C_2$ is an $O(1)$ constant depending on $(\calM, p)$ only. 
(Note that $\max_i \bar{\eta}_i \le q_{\rm max}$ which may be smaller than $C_2$ and improve the constant. 
In below, only $\max_i \hat{\eta}_i \le C_2$ is used in the rest of the proof.)

Now by Lemma \ref{lemma:eta1-eta2} we can bound both 
$(\bar{\eta} \oslash \hat{\eta} - {\bf 1})$
and $(\hat{\eta} \oslash \bar{\eta} - {\bf 1})$
and use them to bound $\| \textcircled{1}\|_2$ and $\| \textcircled{2}\|_2$. It turns out that using one of the ratio bound suffices. 
Specifically, define $\bar{u} \in \R^n$ such that 
\begin{equation}\label{eq:def-baru}
\hat{\eta} = \bar{\eta} \odot  (1 + \bar{u}),
\end{equation}
By \eqref{eq:proof-C1} and \eqref{eq:proof-C2}, applying Lemma \ref{lemma:eta1-eta2} with $ \varepsilon_{\rm SK} < 0.1$, we have that 
\begin{equation}\label{eq:bound-baru-2norm}
\| \bar{u} \|_2 \le \frac{2}{0.9} \frac{C_2}{C_1}  \sqrt{n} \varepsilon_{\rm SK}.
\end{equation}
We are ready to bound $\| \textcircled{1}\|_2$ and $\| \textcircled{2}\|_2$.

\vspace{5pt}
\noindent \underline{Bound of $\| \textcircled{1}\|_2$}: 
 By the definition of $ \textcircled{1}$ as in \eqref{eq:def-circle1-circle2} and $\bar{u}$ as in \eqref{eq:def-baru},
\begin{align}
 \textcircled{1}_i 
& = -  \bar{u}_i  \bar{\eta}_i \sum_{j=1}^n  {W}_{ij}   \bar{\eta}_j ( f(x_j) - f(x_i) ) 
	\label{eq:pf-circle1-2}
\end{align}
We claim that
\begin{equation}\label{eq:bound-sum-barWij-fj-fi}
\sum_{j} {W}_{ij} \bar{\eta}_i   \bar{\eta}_j  |(f(x_j)-f(x_i))|  
\le \Theta(  \sqrt{ \epsilon \log (1/\epsilon)  }),
\quad \forall i =1, \cdots, n,
\end{equation}
the proof of which is given later in beneath. 
If true, \eqref{eq:pf-circle1-2}  gives that
 \[
| \textcircled{1}_i| 
\le 
|  \bar{u}_i | \Theta( \sqrt{ \epsilon \log (1/\epsilon)  } ),
\]
 and thus, together with \eqref{eq:bound-baru-2norm}, we have
 \begin{equation}\label{eq:bound-circle1}
 \|\textcircled{1} \|_2 
 \le \Theta( \sqrt{ \epsilon \log (1/\epsilon)  } ) \|  \bar{u} \|_2
 \le \Theta( \sqrt{ \epsilon \log (1/\epsilon)  } )   \sqrt{n} \varepsilon_{\rm SK}
 \end{equation}

\vspace{5pt}
\noindent
\underline{
Proof of Claim \eqref{eq:bound-sum-barWij-fj-fi}}:
By definition,
\begin{align}
 \sum_{j} {W}_{ij} \bar{\eta}_i   \bar{\eta}_j  |(f(x_j)-f(x_i))|  
& =  \frac{1}{n}    q_\epsilon(x_i ) \sum_{j=1}^n
  \epsilon^{-d/2} 
	g \left( \frac{ \| x_i - x_j \|^2}{\epsilon} \right)   q_\epsilon(x_j ) |f(x_j)-f(x_i)|. \nonumber
\end{align}
We are to use the same truncation argument 
as in the proof of Proposition \ref{prop:barLn-pointwise}
of the kernel on Euclidean ball $B_{\delta}(x_i)$
where 
\begin{equation*}
\delta \sim \sqrt{ \epsilon \log (1/\epsilon)  }.
\end{equation*}
To bound the contribution of the summation for $x_j$ outside $B_{\delta}(x_i)$,
we use that both $f(x_j)$ and $q_\epsilon(x_j)$ are uniformly bounded by $O(1)$ constant for all $j$ (by Lemma \ref{lemma:construct-bar-eta}(i), $ q_\epsilon(x_i ) $ are positive and bounded by $O(1)$ constant $q_{\rm max } $ for all $i$),
and thus the summation outside $B_{\delta}(x_i)$ can be bounded by $O(  \epsilon^{10})$.
Then, the kernel truncation gives that 
\begin{align}
& \frac{1}{n}    q_\epsilon(x_i ) \sum_{j=1}^n
  \epsilon^{-d/2} 
	g \left( \frac{ \| x_i - x_j \|^2}{\epsilon} \right)   q_\epsilon(x_j ) |f(x_j)-f(x_i)| \nonumber \\
& =\frac{1}{n}    q_\epsilon(x_i ) \sum_{j=1}^n
  \epsilon^{-d/2} 
	g \left( \frac{ \| x_i - x_j \|^2}{\epsilon} \right)   q_\epsilon(x_j ) |f(x_j)-f(x_i)| {\bf 1}_{ \{  \|x_j - x_i \|_2 < \delta \}}
	+ O(  \epsilon^{10}). \label{eq:pf-circle1-1}
\end{align}
Next, same as before, by \eqref{eq:truncate-fj-fi-bound}, 
$| (f(x_j)-f(x_i)) {\bf 1}_{ \{  \|x_j - x_i \|_2 < \delta \}} | \le \Theta(\delta)$.
and then
\begin{align}
&
\frac{1}{n}    q_\epsilon(x_i ) \sum_{j=1}^n
  \epsilon^{-d/2} 
	g \left( \frac{ \| x_i - x_j \|^2}{\epsilon} \right)   q_\epsilon(x_j ) |f(x_j)-f(x_i)| {\bf 1}_{ \{  \|x_j - x_i \|_2 < \delta \}}
	 \nonumber \\
& \le
\frac{1}{n}    q_\epsilon(x_i ) \sum_{j=1}^n
  \epsilon^{-d/2} 
	g \left( \frac{ \| x_i - x_j \|^2}{\epsilon} \right)   q_\epsilon(x_j ) 
	{\bf 1}_{ \{  \|x_j - x_i \|_2 < \delta \}}  \Theta(\delta)  \nonumber \\
& =	 \Theta(\delta)  
   \bar{\eta}_i \sum_{j=1}^n W_{ij}   \bar{\eta}_j 
	{\bf 1}_{ \{  \|x_j - x_i \|_2 < \delta \}}  \nonumber \\
& \le 	 \Theta(\delta)  
   \bar{\eta}_i \sum_{j=1}^n W_{ij}   \bar{\eta}_j   \nonumber \\
&   = 	 \Theta(\delta)  (1 + e(\bar{\eta})_i )
   \le \Theta(\delta)  (1 + \varepsilon_{\rm SK}) = \Theta( \sqrt{ \epsilon \log (1/\epsilon)  }).
   \label{eq:bound-row-sum-etac-Wc-1}
\end{align}
Putting back to \eqref{eq:pf-circle1-1}, we have that
\begin{align}
& 
 \frac{1}{n}    q_\epsilon(x_i ) \sum_{j=1}^n
  \epsilon^{-d/2} 
	g \left( \frac{ \| x_i - x_j \|^2}{\epsilon} \right)   q_\epsilon(x_j ) |f(x_j)-f(x_i)| 
 \nonumber \\
& \le\Theta( \sqrt{ \epsilon \log (1/\epsilon)  }) + O(  \epsilon^{10}) 
= \Theta( \sqrt{ \epsilon \log (1/\epsilon)  } ).
\end{align}
The argument holds for each $i$ and  this proves \eqref{eq:bound-sum-barWij-fj-fi}.

 \vspace{5pt}
 \noindent \underline{Bound of $\| \textcircled{2}\|_2$}:  
 By definition of $ \textcircled{2}$ as in \eqref{eq:def-circle1-circle2} and $\bar{u}$ as in \eqref{eq:def-baru},
\begin{align}
 \textcircled{2}_i
 & = - \bar{\eta}_i  (1+\bar{u}_i) \sum_{j=1}^n  {W}_{ij}  \bar{\eta}_j \bar{u}_j ( f(x_j) - f(x_i) ) \nonumber \\
& = -  (1+\bar{u}_i) 
 \sum_{j=1}^n \bar{\eta}_i  W_{ij} \bar{\eta}_j    (f(x_j)-f(x_i))  \bar{u}_j.
\end{align}
Note that by the uniform lower-bound of $\bar{\eta}_i$  \eqref{eq:proof-C1} 
and uniform upper-bound of $\hat{\eta}_i$ in \eqref{eq:proof-C2},
\[
| 1+\bar{u}_i | = \frac{\hat{\eta}_i}{\bar{\eta}_i} \le \frac{C_2}{C_1}, 
\quad \forall i,
\]
and thus
\begin{equation}
| \textcircled{2}_i | \le \frac{C_2}{C_1} \textcircled{3}_i,
\quad
 \textcircled{3}_i : = 
 \sum_{j=1}^n \bar{\eta}_i  W_{ij} \bar{\eta}_j    |f(x_j)-f(x_i)|   | \bar{u}_j |
    =  (A v)_i,
\end{equation}
where $A$ is an $n$-by-$n$ matrix  and $v \in \R^n$ are defined as 
\[
A_{ij} =   \bar{\eta}_i    W_{ij} \bar{\eta}_j   |f(x_j)-f(x_i)| ,
\quad
v_j = | \bar{u}_j|.
\]
By definition, we have that 
\begin{equation}\label{eq:pf-circle2-1}
\| \textcircled{2}  \|_2 \le \frac{C_2}{C_1} \| \textcircled{3}  \|_2, \quad \textcircled{3}   = Av, 
\quad \|v\|_2 = \|   \bar{u}\|_2.
\end{equation}
Because $A$ is real-symmetric with non-negative entries, 
the operator norm $\|A\|_2$ is bounded by the Perron-Frobenius eigenvalue $\rho$ of $A$, 
which is furtherly bounded by the maximum row-sum of $A$. 
Thus,
\[
\| A\|_2 \le 
\max_{i} \sum_j A_{ij}
= \max_{i}  \sum_{j} {W}_{ij} \bar{\eta}_i   \bar{\eta}_j  |(f(x_j)-f(x_i))|  
\le \Theta(  \sqrt{ \epsilon \log (1/\epsilon)  }),
\]
where the last inequality is by  \eqref{eq:bound-sum-barWij-fj-fi}.
Combined with \eqref{eq:pf-circle2-1} and  \eqref{eq:bound-baru-2norm}, we have
\begin{equation}\label{eq:bound-circle2}
\| \textcircled{2} \|_2  
\le \frac{C_2}{C_1} \| \textcircled{3} \|_2  
\le  \frac{C_2}{C_1}  \|A\|_2 \|v\|_2 
=   \frac{C_2}{C_1}  \|A\|_2 \|\bar{u} \|_2 
\le \Theta( \sqrt{ \epsilon \log (1/\epsilon)  } )\sqrt{n} \varepsilon_{\rm SK}.
\end{equation}

Putting together \eqref{eq:bound-circle1} and \eqref{eq:bound-circle2} and back to \eqref{eq:def-circle1-circle2},
we have that 
\begin{equation}\label{eq:bound-circ1+circ2}
\| ( \hat{L}_n -  \bar{L}_n ) \rho_X f  \|_2
\le  \| \textcircled{1} \|_2 + \| \textcircled{2}  \|_2
= \Theta( \sqrt{ \epsilon \log (1/\epsilon)  } )\sqrt{n} \varepsilon_{\rm SK}.
\end{equation}
Recall that 
$\varepsilon_{\rm SK} =  O \left( \epsilon^2, \sqrt{\frac{\log n }{n \epsilon^{d/2}}} \right)$, 
then 
\[
\Theta( \sqrt{ \epsilon \log (1/\epsilon)  } )\sqrt{n} \varepsilon_{\rm SK}
= \sqrt{n} \epsilon    \cdot O \left( \epsilon^{3/2} \sqrt{\log (1/\epsilon)  } ,  \sqrt{\frac{\log n \log (1/\epsilon) }{n \epsilon^{d/2+1}}} \right).
\]
Because $ \epsilon^{3/2} \sqrt{\log (1/\epsilon)  } = O(\epsilon)$,
this proves \eqref{eq:bound-barL-hatL-to-show} under the intersection of the good events  $E_{(1)}$ and $E_{(2)}$.

Together with \eqref{eq:bound-barL-pt-2}, 
we have shown that the claim of the theorem holds under 
the intersection of the good events  $E_{(0)}$, $E_{(1)}$ and $E_{(2)}$,
which happens w.p.$> 1- 6n^{-9}$ for large $n$. 
 \end{proof}
  \begin{proof}[Proof of extension of Theorem \ref{thm:hatLn-2norm} ]
  We show that the proof extends if one replaces $W$ with $W^0$ in the computation of bi-stochastically normalized graph Laplacian.
  
  First, Lemma \ref{lemma:construct-bar-eta} extends to $W^0$, as shown in Remark \ref{rk:W-diagonal-zero}.
  To apply the extension of Lemma \ref{lemma:eta1-eta2}, i.e. Lemma \ref{lemma:eta1-eta2-nonPSD}, to $W^0$,
we verify that letting $A = W^0$ satisfies the needed conditions:

For the two additional conditions as stated in Remark \ref{rk:hateta-W0},
condition (i) is satisfied by setting $\delta$ to be the diagonal entries of $W$, 
because $\delta = (4\pi \epsilon)^{-d/2}/n $ is $o(1)$ under the condition that $\epsilon^{d/2} = \Omega(\log n/n)$. 
Condition (ii) requires the uniform upper-boundedness of both $\hat{\eta}$ and $\bar{\eta}$.
The uniform upper and lower boundedness of $\bar{\eta}$ follows by that of $q_\epsilon$ as before.
To show the uniform upper boundedness of $\hat{\eta}$, 
it suffices to have $\sum_{j} W^0_{ij}$ for all $i$ lower-bounded by $C_3$ and then one can apply Lemma \ref{lemma:degree-conc-W}.
The uniform lower-boundedness of $\sum_{j} W^0_{ij}$
is guaranteed by extending Lemma \ref{lemma:degree-conc-W} to $W^0$,
due to that the diagonal entry $W_{ii} = (4 \pi \epsilon)^{-d/2}/n = o(1)$ under the asymptotic condition of $\epsilon$. 

As a result, Lemma \ref{lemma:eta1-eta2-nonPSD} applies to $A = W^0$ to give the bound of the 2-norm of $\bar{u}$ as
$
\| \bar{u}\|_2 \le \frac{2 C_2 }{0.5 C_1} \sqrt{n }\varepsilon_{\rm SK}
$
which is the counterpart of  \eqref{eq:bound-baru-2norm}.
The rest of the proof of the theorem is by comparing the vector $\hat{L}_n \rho_X f$ to $\bar{L}_n \rho_X f$,
which extends because the diagonal entries of $W$ are not involved in the computation of both vectors. 
\end{proof}

\subsection{Proofs in Section \ref{sec:theory-noise-outlier}}

\subsubsection{Proofs of Lemma \ref{lemma:uniform-H} and \ref{lemma:bareta-noise}}
\begin{proof}[Proof of Lemma \ref{lemma:uniform-H}]
Under Assumption (A3), in the joint limit of $n$ and $\epsilon$, 
$ \varepsilon_z/ \epsilon  = o(1)$. 
We have that for large enough $n$ such that $\varepsilon_z/ \epsilon < 0.1$, 
\[
\frac{|r_{ij}|}{\epsilon} \le \frac{\varepsilon_z}{\epsilon} < 0.1, \quad \forall i\neq j,
\]
and then by that $|e^{- x/4} -1 | \le |x|$ for $|x| < 0.1$, we have that 
\begin{equation}\label{eq:uniform-bound-Hij}
\sup_{i\neq j} |  e^{ -\frac{r_{ij}}{4\epsilon}} -1| \le  \frac{\varepsilon_z}{\epsilon}.
\end{equation}
This shows the uniform bound of $|H_{ij}|$ in \eqref{eq:W0-W'-relation}
by the definition of $H$ as in \eqref{eq:def-Hij}.
\end{proof}
\begin{proof}[Proof of Lemma \ref{lemma:bareta-noise}]
For any $i=1,\cdots,n$,
\begin{align}
\bar{\eta}_i \sum_{j=1}^n W^0_{ij} \bar{\eta}_j 
&= \sum_{j \neq i} \bar{\eta}_i^c \rho_i^{-1} W^0_{ij}  \rho_j^{-1} \bar{\eta}_j^c \nonumber \\
& = \sum_{j \neq i} \bar{\eta}_i^c   W_{ij}^{c,0} (1+H_{ij})  \bar{\eta}_j^{c} \nonumber \\
& =  \sum_{j \neq i} \bar{\eta}_i^c   W_{ij}^{c,0}  \bar{\eta}_j^{c} 
	 + \sum_{j \neq i} \bar{\eta}_i^c   W_{ij}^{c,0} \bar{\eta}_j^{c}  H_{ij}.
	 \label{eq:bound-sum-pf-etac-0}
\end{align}
By Lemma \ref{lemma:uniform-H}, for large enough $n$ and under the good event $E_{(z)}$,
$\sup_{ 1 \le i,j \le n}|H_{ij}| \le {\varepsilon_z}/{\epsilon}$.
Then, by that $W_{ij}^{c,0}$ and  $\bar{\eta}_i^{c}$ are non-negative,
the second term in \eqref{eq:bound-sum-pf-etac-0} can be bounded by
\[
\left| \sum_{j \neq i} \bar{\eta}_i^c   W_{ij}^{c,0} \bar{\eta}_j^{c}  H_{ij}  \right|
\le \sum_{j \neq i} \bar{\eta}_i^c   W_{ij}^{c,0} \bar{\eta}_j^{c} |  H_{ij}  |
\le  \left( \sum_{j \neq i} \bar{\eta}_i^c   W_{ij}^{c,0} \bar{\eta}_j^{c} \right) \frac{\varepsilon_z}{\epsilon}.
\]
This gives that, for any $i$,
\begin{equation}\label{eq:bareta-W0-1}
\bar{\eta}_i \sum_{j=1}^n W^0_{ij} \bar{\eta}_j 
= \left( \sum_{j \neq i} \bar{\eta}_i^c   W_{ij}^{c,0}  \bar{\eta}_j^{c} \right) ( 1 +  O(\frac{\varepsilon_z}{\epsilon})),
\end{equation}
where the constant in big-O is an absolute one.

By Lemma \ref{lemma:construct-bar-eta}, 
under the good event $E_{(1)}$ which happens w.p. $>1-2 n^{-9}$ for large $n$, for all $i$,
\begin{equation}\label{eq:bound-row-sum-E1-Wcij}
 \sum_{j = 1}^n 
 \bar{\eta}_i^c   W_{ij}^c  \bar{\eta}_j^c
 = 1 + \bar{e}^c_i,
 \quad 
 |\bar{e}^c_i| \le \varepsilon^c = O \left( \epsilon^2, \sqrt{\frac{\log n }{n \epsilon^{d/2}}} \right),
\end{equation}
and in addition, 
\begin{equation}\label{eq:uniform-bound-baretac}
0 < q_{\rm min} \le \bar{\eta}_i^c \le q_{\rm max}, \quad i=1,\cdots, n.
\end{equation}
Since 
\begin{equation}\label{eq:Wcii}
W_{ii}^c = \frac{1}{n}
\epsilon^{-d/2} g \left(  0 \right) = \frac{\epsilon^{-d/2}}{n (4\pi)^{d/2}}, 
\end{equation}
we have that 
\[
0 \le (\bar{\eta}_i^c)^2 W_{ii}^c \le \frac{q_{\rm max}^2}{(4\pi)^{d/2}} \frac{\epsilon^{-d/2}}{n } 
=  \Theta( \frac{1}{n  \epsilon^{d/2}}  )
\]
which is of higher order than $\varepsilon^c$. As a result, 
we have that for all $i$,
\begin{equation}\label{eq:bound-sum-pf-etac-1}
 \sum_{j \neq i} \bar{\eta}_i^c   W_{ij}^c \bar{\eta}_j^c 
 = 
( D_{  \bar{\eta}^c   } W^c D_{  \bar{\eta}^c   } {\bf 1 } )_i  
 	- (\bar{\eta}_i^c)^2 W_{ii}^c
= 1 + e_i,
\quad e_i = \bar{e}^c_i + \Theta( \frac{1}{n  \epsilon^{d/2}}  ) = O \left( \epsilon^2, \sqrt{\frac{\log n }{n \epsilon^{d/2}}} \right),
\end{equation}
and the constant in big-O is uniform for all $i$.
Combining \eqref{eq:bareta-W0-1} and \eqref{eq:bound-sum-pf-etac-1}, we have that
\[
\bar{\eta}_i \sum_{j=1}^n W^0_{ij} \bar{\eta}_j 
= \left(1+ e_i \right) ( 1 +  O(\frac{\varepsilon_z}{\epsilon}))
= 1 + O \left( \epsilon^2, \sqrt{\frac{\log n }{n \epsilon^{d/2}}},  \frac{\varepsilon_z}{\epsilon} \right),
\quad \forall i = 1,\cdots, n,
\]
with uniform constant in big-O. This proves \eqref{eq:bareta-noise-existence} under the intersection of $E_{(1)}$ and $E_{(z)}$ in (A3). 
\end{proof}

\subsubsection{Proofs  in Step 2.}

\begin{proof}[Proof of Lemma \ref{lemma:hatetac-upper}]

We first make the following claim:

There is $O(1)$ constant $C_3$ s.t. for large $n$, under a good event $E_{(2)}$ which happens w.p. $>1-2n^{-9}$,
\begin{equation}\label{eq:Wc0-C3-claim}
\sum_{j} W^{c,0}_{ij}\rho_j \ge C_3 > 0,
\quad i=1,\cdots,n.
\end{equation}

Suppose the claim holds, we prove the lemma.
Because $\hat{\eta}$ is an $\varepsilon_{\rm SK}$-approximate scaling factor of $W^0$, 
for all $i$,
\[
\sum_{j \neq i} \hat{\eta}_i W^0_{ij}  \hat{\eta}_j 
= \sum_{j \neq i} \hat{\eta}_i W_{ij}'(1+ H_{ij}) \hat{\eta}_j 
= 1 + \hat{e}_i, 
\quad \| \hat{e} \|_\infty \le \varepsilon_{\rm SK}.
\]
Under the good event $E_{(z)}$ of (A3), by that $W_{ij}'$ and $\hat{\eta}_i$ are non-negative, similarly as in \eqref{eq:bareta-W0-1} we have that 
\begin{equation}
1 + \hat{e}_i  
= \left( \sum_{j \neq i} \hat{\eta}_i W_{ij}'  \hat{\eta}_j  \right)
( 1 +  O( \frac{\varepsilon_z}{\epsilon})),
\end{equation}
where the constant in big-O is an absolute one.
Because $|\hat{e}_i| \le \varepsilon_{\rm SK}$ which is $o(1)$ under the condition of the lemma, 
and  ${\varepsilon_z}/{\epsilon}$ is also $o(1)$ under (A3),
we have that for large $n$,
\begin{equation}\label{eq:sum-hateta-W'-2}
\sum_{j \neq i} \hat{\eta}_i W_{ij}' \hat{\eta}_j  \le 1.1,  \quad i=1, \cdots, n.
\end{equation}
Meanwhile, by \eqref{eq:def-hateta-noise}  and \eqref{eq:def-W'},
\[
\sum_{j \neq i} \hat{\eta}_i W_{ij}' \hat{\eta}_j 
=  \sum_{j \neq i} \hat{\eta}_i^c \frac{1}{\rho_i} W_{ij}'  \frac{1}{\rho_j}  \hat{\eta}_j^c 
=  \hat{\eta}_i^c  \sum_{j \neq i}  W^{c,0}_{ij} \rho_j \hat{\eta}_j 
\ge \hat{\eta}_i^c  C_1 C_3,
\]
where the last inequality is by that $\min_i \hat{\eta}_i \ge C_1 = C_{\rm SK}$  (by the requirement of the $(C_{\rm SK}, \varepsilon_{\rm SK})$-problem)
and  the claim \eqref{eq:Wc0-C3-claim}.
Together with \eqref{eq:sum-hateta-W'-2}, this gives that 
\[
\hat{\eta}_i^c  C_1 C_3 \le 1.1, \quad i=1,\cdots,n, 
\]
and this proves the lemma with $C_2 = 1.1/(C_1C_3)$.

It remains to prove the claim \eqref{eq:Wc0-C3-claim}.
Observe that for each $i$,
\begin{align*}
d_i 
:= \sum_{j\neq i} W^{c,0}_{ij}\rho_j 
& = \sum_{j \neq i, \, b_j =0 } W^{c,0}_{ij} 
    + \sum_{j \neq i, \, b_j =1 } W^{c,0}_{ij}  \rho_j \\
& \ge \sum_{j\neq i, \, b_j =0 } W^{c,0}_{ij} 
 = \sum_{j \neq i} W^{c,0}_{ij} (1-b_j) =: d_i',
\end{align*}
which is an independent sum of $(n-1)$ many r.v. conditioning on $x_i^c$, namely, 
\begin{align*}
d_i'
& = \frac{1}{n} \sum_{j \neq i}  \epsilon^{-d/2} g \left(  \frac{ \| x_i^c - x_j^c\|^2}{\epsilon} \right) (1-b_j) \\
& = (1-\frac{1}{n}) \frac{1}{n-1} \sum_{j \neq i} Y_j
\quad Y_j :=  \epsilon^{-d/2} g \left(  \frac{ \| x_i^c - x_j^c\|^2}{\epsilon} \right) (1-b_j)
\end{align*}
due to that $(x_i^c, b_i)$ are i.i.d. across $i$ by Assumption (A3). 
Using the same argument as in the proof of degree concentration in Lemma \ref{lemma:degree-conc-W},
one can verify that  $|Y_j| \le L_Y = \Theta( \epsilon^{-d/2})$,
$\text{Var}(Y_j) \le \nu_Y = \Theta( \epsilon^{-d/2} )$, 
and by (A3)(i), 
\begin{align*}
\E [Y_j | x_i^c] 
& = \E [(1- p_j) \epsilon^{-d/2} g \left(  \frac{ \| x_i^c - x_j^c\|^2}{\epsilon} \right) | x_i^c ] \\
& \ge  (1- p_{\rm out})   \E [  \epsilon^{-d/2} g \left(  \frac{ \| x_i^c - x_j^c\|^2}{\epsilon} \right)  | x_i^c  ] \\
& =  (1- p_{\rm out}) (  p^c( x_i^c  ) + O(\epsilon) ) 
\ge (1- p_{\rm out})   p_{\rm min} + O(\epsilon) ,
\end{align*}
where all the constants in big-$\Theta$ and big-$O$ are uniform for all $i$.
This gives that for each $i$, under a good event happenning w.p. $> 1-2n^{-10}$,
\begin{align*}
d_i'
&= \E [Y_j | x_i^c] + O(\sqrt{ \frac{\log n}{n \epsilon^{d/2}} }  ) 
 \ge (1- p_{\rm out})   p_{\rm min} + O(\epsilon , \sqrt{ \frac{\log n}{n \epsilon^{d/2}} }  ). 
\end{align*}
Let $E_{(2)}$ be the intersection of the $n$ good events (over the randomness of $(x_i^c, b_i)$ for all $i$),
under which we have
\[
d_i \ge d_i' \ge (1- p_{\rm out})   p_{\rm min} + O(\epsilon , \sqrt{ \frac{\log n}{n \epsilon^{d/2}} }  ),
\quad i = 1, \cdots, n.
\]
Since the constant $(1- p_{\rm out})   p_{\rm min}  > 0$, 
and under the condition of the lemma $O \left( \epsilon, \sqrt{ \frac{\log n}{n \epsilon^{d/2}} }\right) = o(1)$, 
we have that, for large $n$ and under $E_{(2)}$,
$d_i$ is uniformly bounded from below by an $O(1)$ constant $C_3 > 0$.
\end{proof}
\begin{proof}[Proof of Lemma \ref{lemma:hatetac-baretac}]
Under $E_{(1)}$ in Lemma \ref{lemma:bareta-noise}, \eqref{eq:hateta-bareta-epsSK} holds. 
By construction and \eqref{eq:def-u-hateta-bareta}, 
similarly as in the derivation of \eqref{eq:proof-e1-e2-1} in the proof of Lemma \ref{lemma:eta1-eta2}, 
 we have
 \[
 e(\hat{\eta}) - e(\bar{\eta})
 = D_{\bar{\eta}} W^0 D_{\bar{\eta}}  u +  D_u D_{\bar{\eta}} W^0 D_{\bar{\eta}}  ({\bf 1}+u),
 \]
and multiplying $u^T$ to both sides of it gives 
\begin{equation}\label{eq:proof-e1-e2-zerodiag-1}
u^T(  e(\hat{\eta}) - e(\bar{\eta})) 
=  u^T A u  + u^T D_u  A ({\bf 1}+u), 
\end{equation}
where
\[
 A:= D_{\bar{\eta}} W^0 D_{\bar{\eta}},
\quad  A {\bf 1} = {\bf 1}  + e(\bar{\eta}).
\]

For the 2nd term in the r.h.s. of \eqref{eq:proof-e1-e2-zerodiag-1}, since $ \hat{\eta}_i = \bar{\eta}_i (1+u_i)$,
\begin{align}
u^TD_u A ({\bf 1} + u) 
= \sum_{i=1}^n u_i^2  \sum_{j=1}^n \bar{\eta}_i W^0_{ij} \hat{\eta}_j  
 = \sum_{i=1}^n  \frac{u_i^2 }{1+u_i} \left( \sum_{j=1}^n \hat{\eta}_i W^0_{ij} \hat{\eta}_j \right).
 \label{eq:2nd-term-1}
\end{align}
Note that by the definitions \eqref{eq:def-bareta-noise} and \eqref{eq:def-hateta-noise},
\begin{equation}\label{eq:bound-1+ui-noise}
1+u_i = \frac{\hat{\eta}_i}{\bar{\eta}_i} =  \frac{\hat{\eta}_i^c}{\bar{\eta}_i^c}  \le \frac{C_2}{q_{\rm min}}, \quad \forall i,
\end{equation}
where the last inequality is by Lemma \ref{lemma:hatetac-upper} (under $E_{(2)} \cap E_{(z)}$)
 and that $\bar{\eta}_i^c \ge  q_{\rm min}$ as in \eqref{eq:uniform-bound-baretac}.
Meanwhile, 
\[ 
\sum_{j=1}^n \hat{\eta}_i W^0_{ij} \hat{\eta}_j  = 1+ e(\hat{\eta})_i \ge 1- \varepsilon_{\rm SK} > 0.9,
\]
and then \eqref{eq:2nd-term-1} continues as
\begin{equation}\label{eq:2nd-term-2}
u^TD_u A ({\bf 1} + u) 
\ge \frac{q_{\rm min}}{C_2} \sum_{i=1}^n  u_i^2  \left( \sum_{j=1}^n \hat{\eta}_i W^0_{ij} \hat{\eta}_j \right)
\ge 0.9 \frac{q_{\rm min}}{C_2} \| u \|_2^2.
\end{equation}

For the 1st term in the r.h.s. of \eqref{eq:proof-e1-e2-zerodiag-1}, 
though $A $ may not be PSD, by \eqref{eq:W0-and-W'} we have
\[
 A = D_{\bar{\eta}} (W' + W' \odot H) D_{\bar{\eta}}  
= D_{\bar{\eta}} W' D_{\bar{\eta}}   +D_{\bar{\eta}} W'  D_{\bar{\eta}}   \odot H.
\]
Recall that by definition  \eqref{eq:barW'-2},
$\bar{W}' = D_{\bar{\eta}} W' D_{\bar{\eta}} = D_{\bar{\eta}^c} W^{c,0} D_{\bar{\eta}^c}$,
and then
\[
A= \bar{W}'    + \bar{W}'   \odot H.
\]
This gives that 
\[
u^T A u = u^T \bar{W}'  u + u^T (\bar{W}'   \odot H)u =: \textcircled{1} +  \textcircled{2}.
\]
To bound $|\textcircled{2}|$,  
by Lemma \ref{lemma:uniform-H}, with large $n$ and under $E_{(z)}$, we have \eqref{eq:W0-W'-relation}, and then 
\begin{equation}\label{eq:circle2-1}
|\textcircled{2}| \le  \sum_{i,j} |u_i| \bar{W}'_{ij} |H_{ij}| | u_j |
\le \frac{\varepsilon_z}{\epsilon}  \left( \sum_{i,j} |u_i| \bar{W}'_{ij}  |u_j | \right).
\end{equation}
Note that as shown in \eqref{eq:bound-sum-pf-etac-1}, under $E_{(1)}$  of Lemma \ref{lemma:bareta-noise}, for all $i$,
\begin{equation}
\sum_{j=1}^n \bar{W}'_{ij} = \sum_{j \neq i} \bar{\eta}_i^c   W_{ij}^c \bar{\eta}_j^c 
\le 1 + O \left( \epsilon^2, \sqrt{\frac{\log n }{n \epsilon^{d/2}}} \right).
\end{equation}
Because $\bar{W}'$ is a symmetric matrix having non-negative entries, 
its operator norm is bounded by its Perron-Frobenius eigenvalue which is furtherly bounded by the maximum row-sum 
which is $1 + O \left( \epsilon^2, \sqrt{\frac{\log n }{n \epsilon^{d/2}}} \right)$.
Under the condition of the lemma, $O \left( \epsilon^2, \sqrt{\frac{\log n }{n \epsilon^{d/2}}} \right) =o(1)$,
and thus for large enough $n$, 
\[
\| \bar{W}' \|_2 \le 1 + O \left( \epsilon^2, \sqrt{\frac{\log n }{n \epsilon^{d/2}}} \right) \le 1.1.
\]
Back to \eqref{eq:circle2-1}, this gives that 
\[
|\textcircled{2}| 
\le \frac{\varepsilon_z}{\epsilon}  \| \bar{W}' \|_2 \| u\|_2^2 
\le 1.1 \frac{\varepsilon_z}{\epsilon} \| u\|_2^2 .
\]
As for  $\textcircled{1}$, by \eqref{eq:def-Wc0},
\[
\textcircled{1} 
= u^T \bar{W}'  u
= u^T D_{\bar{\eta}^c} W^{c,0} D_{\bar{\eta}^c} u
= u^T D_{\bar{\eta}^c} (W^{c} - \beta_n I) D_{\bar{\eta}^c} u
\ge - \beta_n \| u\odot \bar{\eta}^c \|_2^2,
\]
where the last inequality is by that $W^{c}  \succeq 0$.
Because $\max_i \bar{\eta}^c_i \le  q_{\rm max}$ by \eqref{eq:uniform-bound-baretac}, 
$\| u\odot \bar{\eta}^c \|_2 \le q_{\rm max}   \|u\|_2$, and then
\[
\textcircled{1}  \ge - \beta_n q_{\rm max}^2  \|u\|_2^2.
\]
Putting together, we have that 
\[
u^T A u = \textcircled{1} +  \textcircled{2}
\ge - \beta_n q_{\rm max}^2  \|u\|_2^2 - 1.1 \frac{\varepsilon_z}{\epsilon} \| u\|_2^2.
\]
Because  $\beta_n  = \Theta( \frac{\epsilon^{-d/2}}{n } )$ and $\frac{\varepsilon_z}{\epsilon}$ are both $o(1)$ under the condition of the lemma,
we have  that
\begin{equation}\label{eq:1st-term-2}
u^T A u 
\ge - \delta_n \| u\|_2^2, \quad \delta_n 
= \Theta \left( \frac{\epsilon^{-d/2}}{n } , \frac{\varepsilon_z}{\epsilon} \right) = o(1). 
\end{equation}

Inserting \eqref{eq:1st-term-2} and \eqref{eq:2nd-term-2} to \eqref{eq:proof-e1-e2-zerodiag-1}, we have that 
\[
u^T(  e(\hat{\eta}) - e(\bar{\eta})) 
\ge   (0.9 \frac{q_{\rm min}}{C_2}  - \delta_n )  \|u\|_2^2.
\]
Because $ \delta_n = o(1)$, with large $n$,  $ 0.9 \frac{q_{\rm min}}{C_2}  - \delta_n  > 0.8 \frac{q_{\rm min}}{C_2} $,
and then we have
\[
0.8 \frac{q_{\rm min}}{C_2}   \|u\|_2 \le  \|  e(\hat{\eta}) - e(\bar{\eta}) \|_2 \le \|  e(\hat{\eta}) \|_2 + \| e(\bar{\eta}) \|_2,
\]
which proves the lemma combined with that $\| e(\hat{\eta})\|_\infty, \| e(\bar{\eta})\|_\infty \le \varepsilon_{\rm SK}$
and that $\| v\|_2 \le \sqrt{n} \|v\|_\infty$ for any $v \in \R^n$.
\end{proof}

\begin{proof}[Proof of Proposition \ref{prop:hatL'-convergence}]
By definitions of $\hat{L}'$ and $ \bar{L}' $ in \eqref{eq:def-three-Ls-noise},
similarly as in \eqref{eq:def-circle1-circle2}, we have
 \begin{align*}
( ( \hat{L}' -  \bar{L}' ) \rho_X f  )_i
& =  \bar{\eta}_i \sum_{j=1}^n  {W}_{ij}'   \bar{\eta}_j ( f(x_j) - f(x_i) )
 	- \hat{\eta}_i \sum_{j=1}^n  {W}_{ij}'   \hat{\eta}_j ( f(x_j) - f(x_i) )  = 	 \textcircled{1}_i +  \textcircled{2}_i,
 \end{align*}
where
\begin{align*}
\textcircled{1}_i 
& =  (\bar{\eta}_i - \hat{\eta}_i ) \sum_{j=1}^n  {W}_{ij}'   \bar{\eta}_j ( f(x_j) - f(x_i) ) 
 =  - u_i \bar{\eta}_i  \sum_{j=1}^n  {W}_{ij}'   \bar{\eta}_j ( f(x_j) - f(x_i) ),  \\ 
\textcircled{2}_i 
& = \hat{\eta}_i\sum_{j=1}^n  {W}_{ij}' (  \bar{\eta}_j -  \hat{\eta}_j )( f(x_j) - f(x_i) )
= -(1+u_i) \sum_{j=1}^n  \bar{\eta}_i {W}_{ij}'\bar{\eta}_j   ( f(x_j) - f(x_i) ) u_j.
\end{align*}
By \eqref{eq:def-W'} and definitions of $\hat{\eta}^c$ and $\bar{\eta}^c$, we equivalently have
\begin{align*}
\textcircled{1}_i 
& =  - u_i  \sum_{j=1}^n  \bar{\eta}^c_i  {W}^{c,0}_{ij}  \bar{\eta}^c_j ( f(x_j) - f(x_i) ),  \\ 
\textcircled{2}_i 
& = -(1+u_i) \sum_{j=1}^n  \bar{\eta}^c_i {W}^{c,0}_{ij} \bar{\eta}^c_j   ( f(x_j) - f(x_i) ) u_j.
\end{align*}
Applying Lemma \ref{lemma:hatetac-baretac} gives that $\| u\|_2 \le  \Theta( \sqrt{n} \varepsilon_{\rm SK})$, 
and we also have $O(1)$ uniform entry-wise upper bound of $(1+u_i)$ as in \eqref{eq:bound-1+ui-noise}.
Note that the claim \eqref{eq:bound-sum-barWij-fj-fi} holds by replacing 
$W_{ij}$ with ${W}^{c,0}_{ij} $
and $\bar{\eta}_i$ with $\bar{\eta}^c_i$,
because the diagonal entry, i.e, when $j=i$, is not involved in the summation,
and also we only need \eqref{eq:bound-row-sum-E1-Wcij} (which holds under $E_{(1)}$) 
to prove \eqref{eq:bound-row-sum-etac-Wc-1}.
The rest of the proof is the same as the  derivation of \eqref{eq:bound-circ1+circ2} in the proof of  Theorem \ref{thm:hatLn-2norm},
which gives that
\[
\| ( \hat{L}' -  \bar{L}' ) \rho_X f  \|_2
\le  \| \textcircled{1} \|_2 + \| \textcircled{2}  \|_2
= \Theta( \sqrt{ \epsilon \log (1/\epsilon)  } )\sqrt{n} \varepsilon_{\rm SK}
\]
under the needed good events as stated in the proposition at large $n$. 
\end{proof}

\subsubsection{Proofs in Step 3. and Theorem \ref{thm:hatL-convergence-noise}}

\begin{proof}[Proof of Proposition \ref{prop:hatL-convergence}]
By definitions of $\hat{L}'$ and $ \hat{L} $ in \eqref{eq:def-three-Ls-noise}, we have
 \begin{align*}
\textcircled{3}_i
& := ( ( \hat{L}' -  \hat{L} ) \rho_X f  )_i \\
& =   \sum_{j=1}^n  \hat{\eta}_i {W}^0_{ij}  \hat{\eta}_j  ( f(x_j) - f(x_i) )
 	-  \sum_{j=1}^n  \hat{\eta}_i  {W}_{ij}'  \hat{\eta}_j  ( f(x_j) - f(x_i) )  \\
& =  \sum_{j=1}^n  \hat{\eta}_i {W}_{ij}' H_{ij}  \hat{\eta}_j  ( f(x_j) - f(x_i) ),
 \end{align*}
where the last equality is by \eqref{eq:W0-and-W'}.
Furthermore, by \eqref{eq:def-u-hateta-bareta},  
\[
\textcircled{3}_i
=  \sum_{j=1}^n (1+u_i) \bar{\eta}_i {W}_{ij}' H_{ij}  \bar{\eta}_j  (1+u_j)( f(x_j) - f(x_i) ).
\]
By \eqref{eq:bound-1+ui-noise} which holds by Lemma \ref{lemma:hatetac-upper} (under $E_{(2)} \cap E_{(z)}$)
\begin{equation}
1+u_i   \le \frac{C_2}{q_{\rm min}}, \quad \forall i,
\end{equation}
and then, together with the uniform upperbound of $|H_{ij}|$ in  \eqref{eq:W0-W'-relation} which holds with large $n$ and under $E_{(z)}$ by Lemma \ref{lemma:uniform-H},
\[
| \textcircled{3}_i |
\le (\frac{C_2}{q_{\rm min}} )^2 \sum_{j=1}^n   \bar{\eta}_i {W}_{ij}'  \bar{\eta}_j | f(x_j) - f(x_i) | | H_{ij} |
\le \Theta (\frac{\varepsilon_z}{\epsilon}) \sum_{j=1}^n   \bar{\eta}_i {W}_{ij}'  \bar{\eta}_j | f(x_j) - f(x_i) |.
\]
At last, by \eqref{eq:def-W'} and the definition of $\bar{\eta}^c$ as in \eqref{eq:def-bareta-noise},
\[
\sum_{j=1}^n   \bar{\eta}_i {W}_{ij}'  \bar{\eta}_j | f(x_j) - f(x_i) |
= \sum_{j=1}^n   \bar{\eta}_i^c {W}^{c,0}_{ij}  \bar{\eta}_j^c | f(x_j) - f(x_i) |
\le \Theta(  \sqrt{ \epsilon \log (1/\epsilon)  }), \quad \forall i,
\]
where the last inequality is by claim \eqref{eq:bound-sum-barWij-fj-fi}
 replacing $W_{ij}$ with ${W}^{c,0}_{ij} $
and $\bar{\eta}_i$ with $\bar{\eta}^c_i$,
as has been used in the proof of Proposition \ref{prop:hatL'-convergence} (holds under $E_{(1)}$).
This shows that 
$\| \textcircled{3} \|_\infty \le \Theta (\frac{\varepsilon_z}{\epsilon})  \Theta(  \sqrt{ \epsilon \log (1/\epsilon)  })$
which proves the proposition, and all constants in big-$\Theta$ depend on $\calM, p$ and $f$.
\end{proof}
\begin{proof}[Proof of Theorem \ref{thm:hatL-convergence-noise}]
Under the condition of the theorem, 
for large $n$ and under the intersection of good events 
$E_{(z)}$ in (A3), 
 $E_{(0)}$ in Proposition \ref{prop:barL'-convergence},
$E_{(1)}$ in Lemma \ref{lemma:bareta-noise}, 
and 
$E_{(2)}$ in Lemma \ref{lemma:hatetac-upper}
which holds with the high probability as stated in the theorem,
Propositions \ref{prop:barL'-convergence}, \ref{prop:hatL'-convergence} and \ref{prop:hatL-convergence} hold.
By the setting of $\varepsilon_{\rm SK}$ as in \eqref{eq:def-epsSK-noise}, one can verify that 
\[
O \left(  \sqrt{\frac{  \log (1/\epsilon)}{\epsilon } } \right)  \varepsilon_{\rm SK}
=  O \left( \epsilon^{3/2}  \sqrt{{  \log (1/\epsilon)}}, 
       \sqrt{\frac{\log n  \log (1/\epsilon) }{n  \epsilon^{d/2+1}}} , 
       \frac{\varepsilon_z}{\epsilon}  \sqrt{\frac{  \log (1/\epsilon)}{\epsilon } }\right),
\]
which is bounded by the r.h.s. of \eqref{eq:bound-thm-hatL-noise}.
The theorem then follows by the three propositions and  that $\| v\|_2 \le \sqrt{n} \|v\|_\infty$ for any $v \in \R^n$.
\end{proof}

\section*{Acknowledgement}
The work is supported by NSF DMS-2007040. The two authors acknowledge support by NIH grant R01GM131642. X.C. is also partially supported by NSF (DMS-1818945, DMS-1820827, DMS-2134037)  and the Simons Foundation.

\small
\bibliographystyle{plain}
\bibliography{kernel}

\appendix

\setcounter{figure}{0} \renewcommand{\thefigure}{A.\arabic{figure}}
\setcounter{table}{0} \renewcommand{\thetable}{A.\arabic{table}}
\setcounter{equation}{0} \renewcommand{\theequation}{A.\arabic{equation}}
\setcounter{remark}{0} \renewcommand{\theremark}{A.\arabic{remark}}

\section{Experimental details}\label{app:exp-detail}

\begin{figure}
\centering
\includegraphics[height=.24\linewidth]{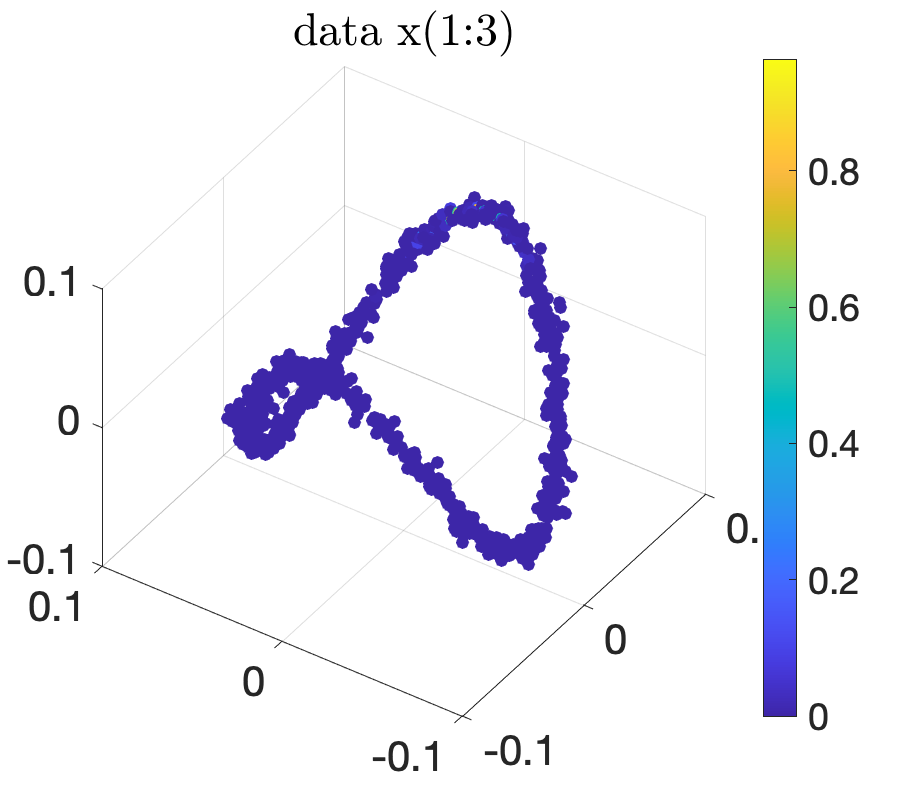}
\includegraphics[height=.24\linewidth]{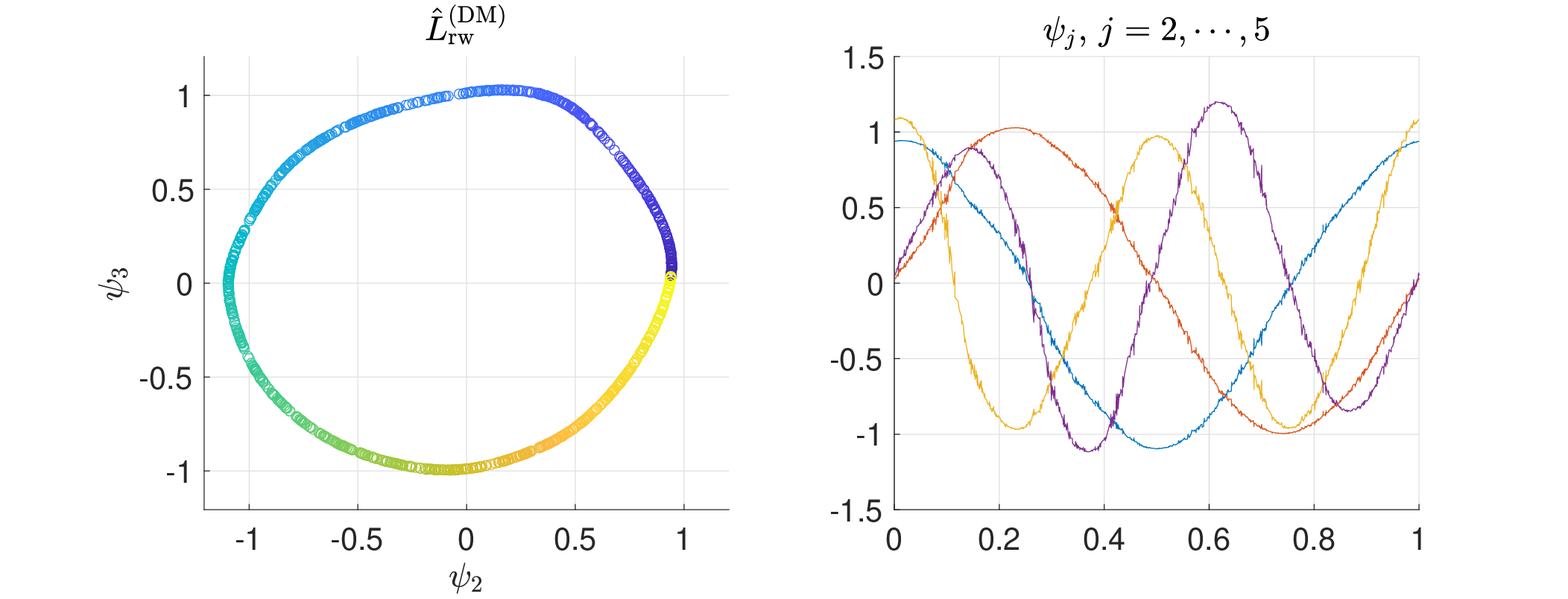}\\
\includegraphics[height=.24\linewidth]{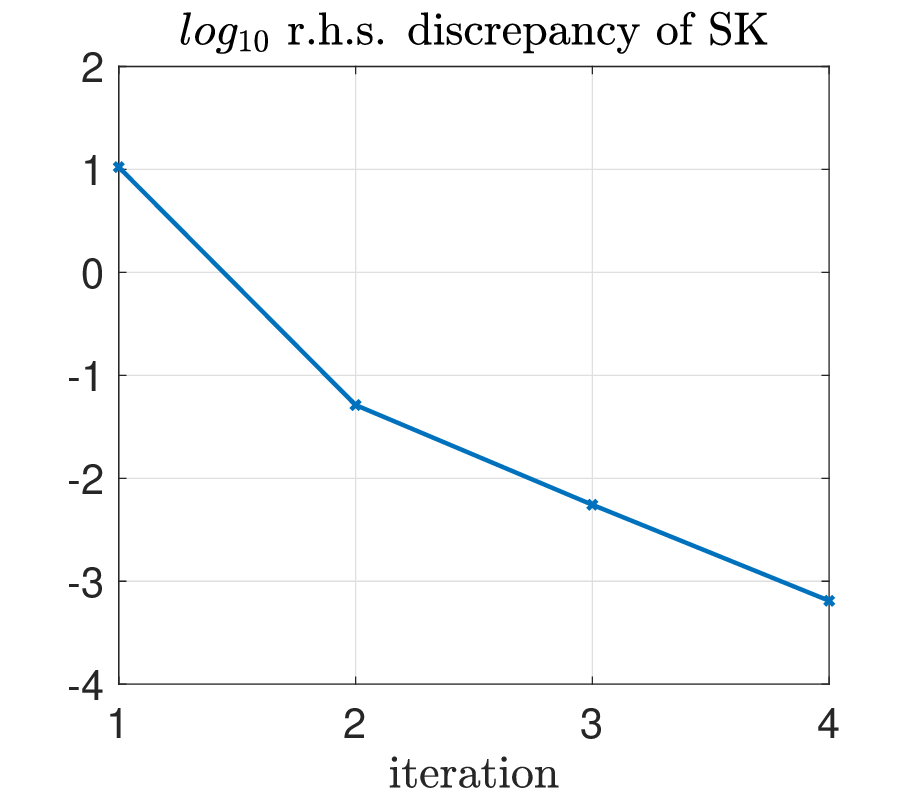}
\includegraphics[height=.24\linewidth]{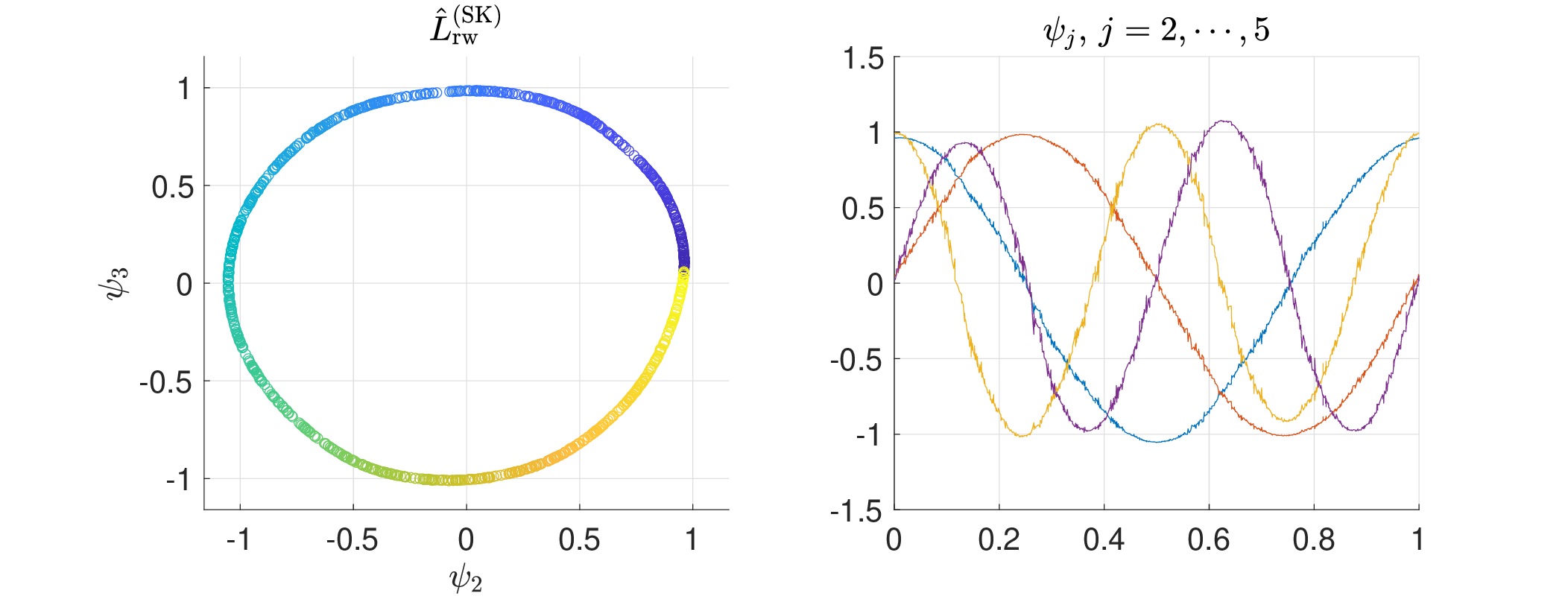}
\vspace{-5pt}
\caption{
\small
Same plots as in Figure \ref{fig:embed-1d-noise-type1} for 1D manifold data with i.i.d. noise (non-heteroskedastic) in  $\R^m$, $m=2000$.
In this simulation, the MSE errors for DM is  
0.0066 and 0.0295
for the first and second pair of harmonics,
and those for SK is 
0.0026 and 0.0069.
}
\label{fig:embed-1d-noise-type2}
\end{figure}

\subsection{Simulated 1D manifold data}

\subsubsection{Clean manifold data and test function}

For the intrinsic coordinate $t \in [0,1]$, the extrinsic coordinates of data samples in $\R^4$ are computed by 
\[
x(t) = \frac{1}{2\pi \sqrt{5} } \left( 
                       cos(2 \pi  t), 
                       sin( 2 \pi t), 
                       \frac{2}{\omega_\calM} \cos (\omega_\calM 2 \pi   t), 
                       \frac{2}{\omega_\calM} \sin (\omega_\calM 2 \pi   t) \right),
\quad \omega_\calM =2.
\]                       
 The data density $p$ on 1D manifold $S^1$ has the expression
\[
p(t) = 1- 0.6 \sin( 6 \pi t),
\]
and the test function $f$ has the expression
\[
f(t) = \sin(2\pi (t + 0.05)).
\]
The manifold data and the functions $p$, $f$, and $\Delta_p f$ are shown in Figure \ref{fig:data-1d}.


\subsection{Computation of $ \hat{L}^{(\rm DM)}$}
Following \cite{coifman2006diffusion}, the $\alpha= {1}/{2}$ normalized graph Laplacian  is constructed as
\begin{equation}\label{eq:def-hatL-DM}
 \hat{L}^{(\rm DM)} = D( \tilde{W}) - \tilde{W},
 \quad
 \tilde{W}_{ij} := \frac{W^0_{ij}}{\sqrt{D(W^0)_i}\sqrt{D(W^0)_j}},
\end{equation}
and $D(W^0)$ is the degree matrix of $W^0$, and $W^0$ equals $W$ as in \eqref{eq:def-W} with zero-ing out the diagonal entries. 

For clean manifold data, under the proper setting of bandwidth parameter $\epsilon$, namely $\epsilon$ is at least larger than the connectivity regime for large $m$ (see Remark \ref{rk:connectivity-regime}), the degree entries $D(W^0)_i$ for all $i$ are strictly positive with high probability, and thus \eqref{eq:def-hatL-DM} is well-defined. 
For data with outlier noise, in the setting considered in this paper, 
the degree matrix of $W^0$  again has strictly positive diagonal entries with high probability: this can be verified by the claim \eqref{eq:Wc0-C3-claim}, the definition of $\rho_i$,  and that $W^{0}_{ij} = \rho_i W^{0,c}_{ij} \rho_j (1 + H_{ij})$ where $H_{ij}$ is uniformly $o(1)$ (Lemma \ref{lemma:uniform-H}).

\begin{figure}
\centering{
\includegraphics[height=.25\linewidth]{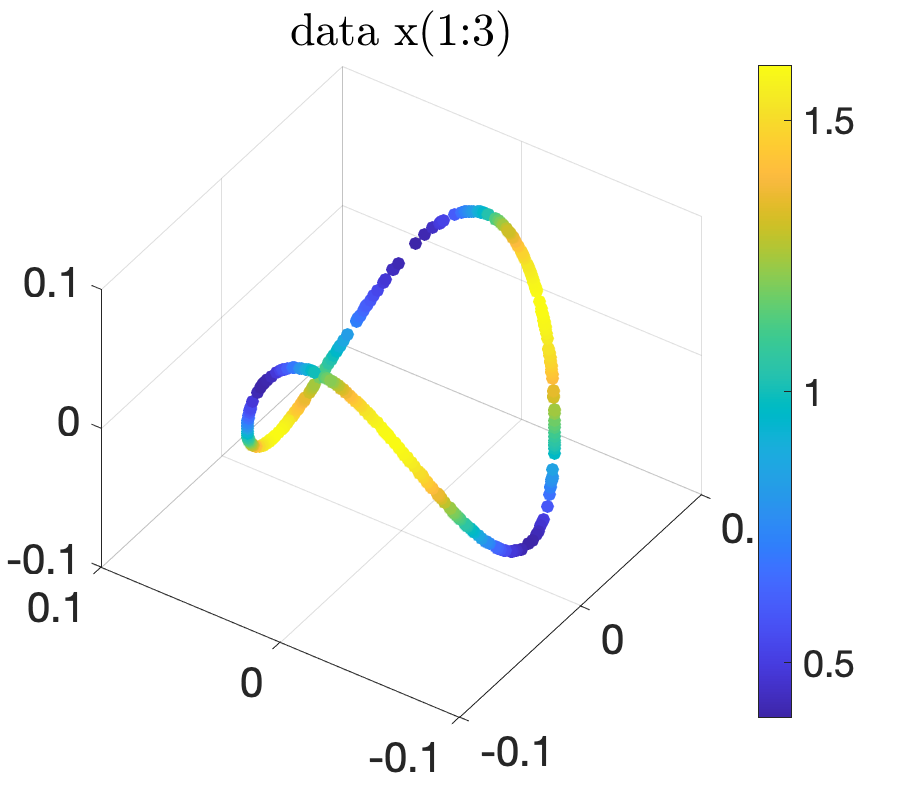} 
\includegraphics[height=.25\linewidth]{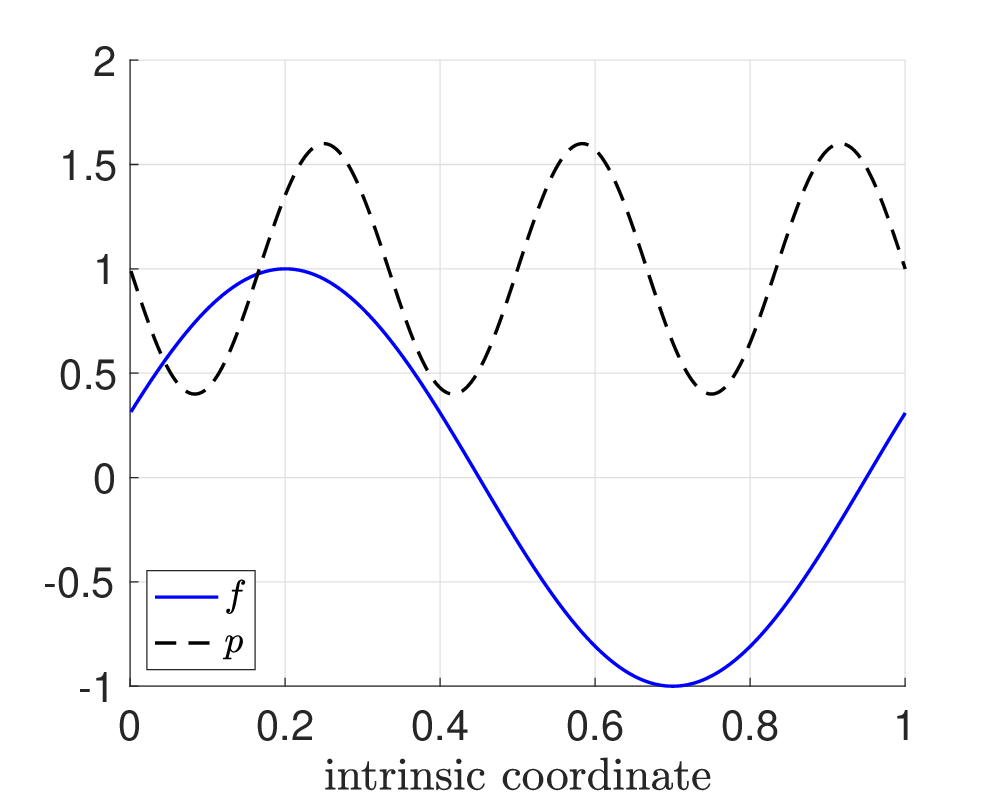}
\includegraphics[height=.25\linewidth]{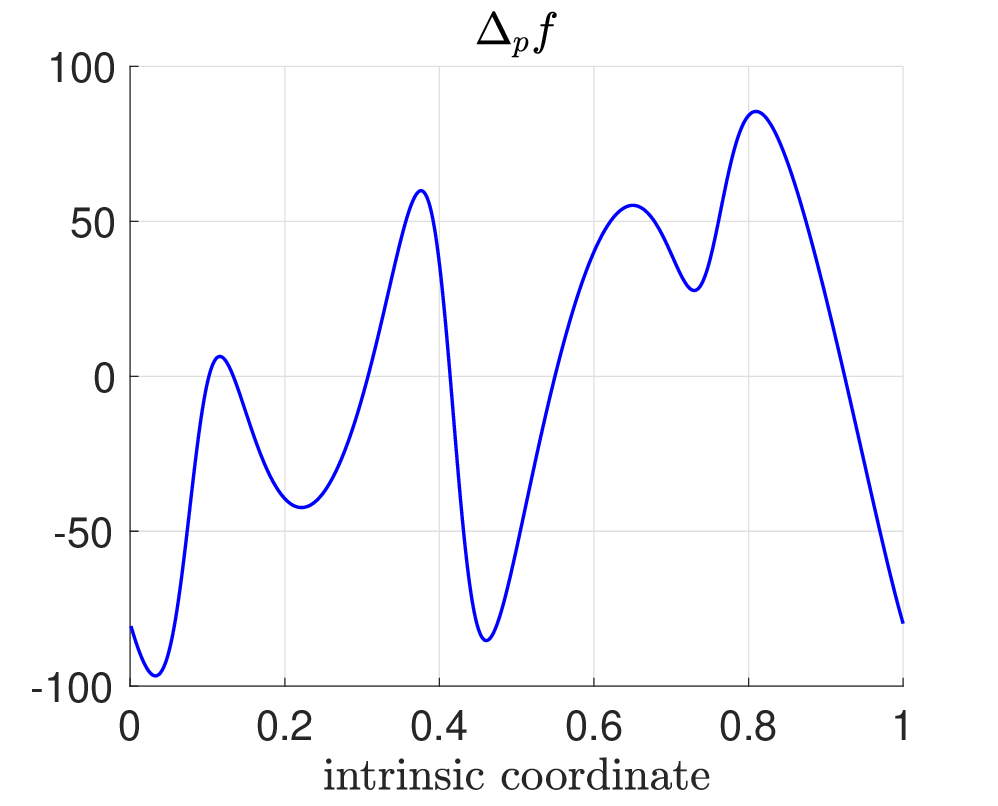}
}\vspace{-5pt}
\caption{
\small
Data in $\R^4$ lying on a 1D closed curve of length 1.
(Left) First 3 coordinates of 500 samples with color indicating the density function $p$.
(Middle) The density function $p$ and a function $f$,
and (Right) $\Delta_p f$, all plotted v.s. the intrinsic coordinate (the arclength) on $[0,1]$. 
}
\label{fig:data-1d}
\end{figure}

\section{Additional proofs}\label{app:more-prooofs}

\subsection{Lemma \ref{lemma:h-integral-diffusionmap}}

\begin{proof}[Proof of Lemma \ref{lemma:h-integral-diffusionmap}]
Lemma A.5 in \cite{cheng2021convergence} was derived for general kernel function $h(\xi)$ satisfying certain  $C^2$ regularity and sub-exponential decay condition. 
The Gaussian kernel $g$ as in \eqref{eq:def-g-gaussian} satisfies the needed conditions.
The lemma directly follows Lemma A.5 in \cite{cheng2021convergence} by letting $h = g$:
The two moments $m_0 [ h ]$ and $m_2 [ h ]$  in the statement of Lemma A.5 in \eqref{eq:def-g-gaussian} 
are defined for kernel function $h: \R_+ \to \R$ as
 \begin{equation}\label{eq:def-m0-m2}
m_0 [ h ] := \int_{\R^d} h( \| u \|^2) du,
 \quad
  m_2 [ h ] :=  \frac{1}{d} \int_{\R^d} \| u \|^2 h( \| u \|^2) du,
  \end{equation}
and the moments are finite when $h$ decays sub-exponentially. 
For Gaussian kernel $g$,  $ m_0[g] = 1$, $ m_2[g] = 2$.
Plugging in the statement of Lemma A.5 in \cite{cheng2021convergence} 
 the values of  $m_0 [ g ]$ and $m_2 [ g ]$ proves Lemma \ref{lemma:h-integral-diffusionmap}
\end{proof}

\[
 \epsilon^{- {d}/{2}} \int_{\calM} g \left( \frac{ \| x-y \|^2}{\epsilon} \right) f(y) dV(y)
=   f(x) +  \epsilon ( \omega f + \Delta_{\calM}  f)(x) 
+ O (\epsilon^2) ,
\]

\begin{equation}
 \sum_{j=1}^n W_{ij}
= {p} (x_i) +  O \left( \epsilon, \sqrt{ \frac{\log n}{n \epsilon^{d/2}} }\right),
\quad 
 i = 1, \cdots, n.
\end{equation}

\begin{proof}[Proof of Lemma \ref{lemma:degree-conc-W}]
The proof uses the same argument as in Lemma 6.1(1) of \cite{cheng2021eigen} and is included for completeness.
We show that \eqref{eq:degree-D-concen} holds for each $i$ under a good event and then apply the union bound to the $n$ good events. 
For fixed $i$, note that 
\[
\sum_{j=1}^n W_{ij} = \frac{\epsilon^{-d/2}}{n}   g( 0)
			+ \frac{1}{n} \sum_{j\neq i} \epsilon^{-d/2} g \left(  \frac{ \| x_i - x_j\|^2}{\epsilon} \right)
			=: \textcircled{1} + (1-\frac{1}{n}) \textcircled{2}.
\]
We have that $\textcircled{1}  = O(n^{-1} \epsilon^{-d/2})$, 
and $\textcircled{2}$ consists of  an independent sum conditioning on $x_i$ and over the randomness of the $n-1$ r.v. $x_j$, namely,
$\textcircled{2} = \frac{1}{n-1}\sum_{j \neq i} Y_j$,
and 
\[
Y_j = \epsilon^{-d/2} g \left(  \frac{ \| x_i - x_j\|^2}{\epsilon} \right), \quad j \neq i.
\]
Applying Lemma \ref{lemma:h-integral-diffusionmap} with $f=p$, which has $C^6$ regularity by Assumption (A2), gives that 
\[
\E [Y_j | x_i] =  \epsilon^{- {d}/{2}} \int_{\calM} g \left( \frac{ \| x_i -y \|^2}{\epsilon} \right) p(y) dV(y)
=  p(x_i) +  O( \epsilon ),
\]
and  the constant in big-$O$ depends on ($\calM, p)$  and is uniform for all $ i$.
Meanwhile, we have boundedness of the r.v.  $|Y_j| \le L_Y = \Theta(\epsilon^{-d/2})$, 
and variance of $Y_j$ is bounded by 
\begin{align}
\E [ Y_j^2 |x_i ]
& = \epsilon^{-d} \int_\calM g \left(  \frac{ \| x_i - y\|^2}{\epsilon} \right)^2 p(y) dV(y)  \nonumber \\
& = (4\pi)^{-d}  \epsilon^{-d} \int_\calM e^{- \frac{ \| x_i - y\|^2}{2 \epsilon}  } p(y) dV(y) 
\le \nu_Y = \Theta( \epsilon^{-d/2}).
\end{align}
The constants in the big-$\Theta$ notation of $L_Y$ and $\nu_Y$ depend on ($\calM, p)$  and not on $x_i$. 
Since ${\log n}/{(n \epsilon^{d/2})} = o(1) $ under the condition of the lemma, the classical Bernstein gives that for large $n$,
there is a good event $E_{i}$ which happens w.p. $>1-2n^{-10}$ under which
$\textcircled{2} = p(x_i) +  O( \epsilon, \sqrt{  \frac{\log n}{n \epsilon^{d/2}}} )$.
Because $\textcircled{1}$ is of smaller order, this gives that 
$\sum_{j=1}^n W_{ij} = p(x_i) +  O( \epsilon, \sqrt{  \frac{\log n}{n \epsilon^{d/2}}} )$ under $E_i$,
where the constant in big-$O$ depends on ($\calM, p)$ and is uniform for all $i$.
Taking the intersection of all $E_i$ as the good event proves the lemma.
\end{proof}

\subsection{Corollary \ref{cor:eigenvector}}

 \begin{proof}[Proof of Corollary \ref{cor:eigenvector}]
 Let $\phi_k $ be defined as in \eqref{eq:def-phik}, and define $\hat L :=  \frac{1}{\epsilon}\hat{L}_n$.
 Under the condition of the corollary, Theorem \ref{thm:hatLn-2norm} applies to give that, for large $n$ under the good events ($E_{(0)} \cap E_{(1)} \cap E_{(2)}$ in the proof) that happen w.p. $>1-6n^{-9}$, we have
 \begin{align}
 \| \hat L \phi_k - \mu_k \phi_k \|_2
  =   \frac{1}{\sqrt{n}} \|  \frac{1}{\epsilon}\hat{L}_n  \rho_X  (\psi_k) - \rho_X(  - \Delta_p \psi_k ) \|_2 
 \le \varepsilon_{\rm pt} := \Theta \left(\epsilon, \sqrt{\frac{\log n \log (1/\epsilon) }{n \epsilon^{d/2+1}}} \right).
 \end{align}
 When $n > n_0$,  the definition of $\gamma_K$ in \eqref{eq:def-gamma-K} and \eqref{eq:eigenvalue-crude} give that (recall that $\mu_1 = \lambda_1 = 0$)
 \begin{equation}\label{eq:eigen-stay-away}
\min_{1 \le j \le n, \, j \neq k} | \mu_k - \lambda_j | > \gamma_K > 0,
\quad 2 \le k \le k_{max}.
\end{equation}
For each $ k \le k_{max}$, let $S_k = \text{Span}\{ u_k \}$ be the 1-dimensional subspace in $\R^n$, and $S_k^\perp$  be its orthogonal complement. 
We denote by $P_W$ the orthogonal projection to the subspace $W$. 
By definition, 
\[
P_{S_k^\perp} ( \mu_k  \phi_k ) = \sum_{j\neq k, j=1}^n  \mu_k (u_j^T \phi_k) u_j,
\]
and by that $\hat L u_j = \lambda_j u_j$,
\[
P_{S_k^\perp} ( \hat L \phi_k ) = \sum_{j\neq k, j=1}^n  (u_j^T \hat L \phi_k) u_j 
= \sum_{j\neq k, j=1}^n  \lambda_j (u_j^T  \phi_k) u_j.
\]
Subtracting the two, we have 
\[
P_{S_k^\perp} ( \mu_k  \phi_k - \hat L \phi_k )
= \sum_{j\neq k, j=1}^n  (\mu_k - \lambda_j) (u_j^T \phi_k) u_j
\]
By that $u_j$ are orthonormal and \eqref{eq:eigen-stay-away},
\begin{align*}
\| P_{S_k^\perp} ( \mu_k  \phi_k - \hat L \phi_k ) \|_2^2 
&= \sum_{j\neq k, j=1}^N  (\mu_k - \lambda_j)^2 (u_j^T \phi_k)^2 \\ 
& \ge \gamma_K^2 \sum_{j\neq k, j=1}^N   (u_j^T \phi_k)^2
= \gamma_K^2 \|   P_{S_k^\perp} \phi_k \|_2^2,
\end{align*}
which gives that 
\begin{align*}
\gamma_K \|   P_{S_k^\perp} \phi_k \|_2 
& \le \| P_{S_k^\perp} ( \mu_k  \phi_k - \hat L \phi_k ) \|_2 \\
& \le \|   \mu_k  \phi_k - \hat L \phi_k  \|_2 \le \varepsilon_{\rm pt},
\end{align*}
that is
\begin{equation}\label{eq:bound-pskperp-phik-1}
 \|   P_{S_k^\perp} \phi_k \|_2  \le \frac{ \varepsilon_{\rm pt}}{\gamma_K} = \Theta \left(\epsilon, \sqrt{\frac{\log n \log (1/\epsilon) }{n \epsilon^{d/2+1}}} \right).
\end{equation}
By definition,
$P_{S_k^\perp} \phi_k  = \phi_k - (u_k^T \phi_k) u_k$,  where $\| u_k \|_2 = 1$.
Note that Lemma \ref{lemma:rhoX-isometry-whp} gives that $\phi_k$ are close to unit vectors:
 For large $n$ and under a good event $E_{(3)}$ which happens w.p.$>1-2K n^{-10}$,
 \[
 \| \phi_k \|^2 = 1 + O( \sqrt{\frac{\log n}{n}}), \quad 1 \le k \le K.
 \]
Together with \eqref{eq:bound-pskperp-phik-1},  one can verify that 
\begin{equation}\label{eq:uk-phik-align}
| u_k^T \phi_k | = 1 + O( \sqrt{\frac{\log n}{n}}) + O(  \varepsilon_{\rm pt}^2) = 1+o(1).
\end{equation}
We define 
\[
\alpha_k = \frac{1}{ u_k^T \phi_k},
\]
and have that 
\begin{align*}
\| \alpha_k \phi_k - u_k \|_2 
& =  \frac{\| \phi_k -  (u_k^T \phi_k)u_k  \|_2 }{| u_k^T \phi_k|} \\
& =  \frac{\| P_{S_k^\perp} \phi_k  \|_2 }{| u_k^T \phi_k|}  
 = \frac{ O( \varepsilon_{\rm pt}  )}{ 1 + o(1)}  \quad \text{(by \eqref{eq:bound-pskperp-phik-1} and \eqref{eq:uk-phik-align})} \nonumber \\
& = O(\varepsilon_{\rm pt}).
\end{align*}
The bound holds for each $k \le k_{max}$.
 \end{proof}

\begin{lemma}\label{lemma:rhoX-isometry-whp}
Under (A1)(A2), $x_i \sim p$ i.i.d.
For fixed $K$, 
when $n$ is sufficiently large, w.p. $> 1 -  2 K n^{-10}$,
\begin{equation}\label{eq:uk-near-orthonormal}
\frac{1}{n } \|  \rho_X ( \psi_k ) \|^2 
 = 1 + O( \sqrt{\frac{\log n}{n}}), \quad 1 \le k \le K.
\end{equation}
\end{lemma}

\begin{proof}[Proof of Lemma \ref{lemma:rhoX-isometry-whp}]
By definition, for each $k=1, \cdots, K$,
\[
\frac{1}{n } \|  \rho_X ( \psi_k ) \|^2 
= \frac{1}{n} \sum_{j=1}^n \psi_k(x_i)^2,
\]
which is an independent sum of r.v. $Y_j := \psi_k(x_i)^2$. 
$\E Y_j = \int_{\calM} \psi_k(y)^2 p(y) dV(y)  = \langle \psi_k, \psi_k \rangle_p = 1$.
For the boundedness of $Y_j$,
since $\psi_k \in C^\infty(\calM)$, 
we have $|Y_j| \le L_Y := \| \psi_k \|_{\infty,\calM }^2$ which is an $O(1)$ constant (depending on $K$).
Then the deviation bound \eqref{eq:uk-near-orthonormal} for each $k$ holds w.p.$>1-2n^{-10}$ by the classical Bernstein.
The claim for all $k$ follows by a union bound. 
\end{proof}

\end{document}